\newtheorem{prop}{Proposition}[section]
\newtheorem{lem}{Lemma}[section]
\newtheorem{thm}{Theorem}[section]
\newtheorem*{theo}{Theorem}
\newtheorem{rem}{Remark}[section]
\newtheorem*{conje}{Conjecture}
\newcommand{\ts}{\textsuperscript}
\title{Distinction of the Steinberg representation and a conjecture of Prasad and Takloo-Bighash}
\author{Marion Chommaux}
\begin{document}
\maketitle

\begin{abstract}
We prove a conjecture of Prasad and Takloo-Bighash for discrete series of inner forms of the general linear group over a non archimedean local field, in the case 
of Steinberg representations.
\end{abstract}

 \section*{Introduction}
 
 Let $F$ be a non archimedean local field of characteristic not $2$ and $E$ a quadratic extension of $F$. Let $D$ be a central division $F$-algebra 
 of dimension $d^2$. If $n$ is a positive integer, then $\mathcal{M}_n(D)$ is a simple central $F$-algebra of dimension $n^2d^2$. As $E/F$ is quadratic, $E$ is embedded in $\mathcal{M}_n(D)$ as an $F$-subalgebra 
 if and only if $nd$ is even. 
 Then $C_{\mathcal{M}_n(D)}(E)$ (the centralizer of $E$ in %% à vérifier + norme réduite
 $\mathcal{M}_n(D)$) is an $E$-algebra. We denote by $N_{rd,F}$ (resp. $N_{rd,E}$) the reduced norm of $GL(n,D)$ (resp. $(C_{\mathcal{M}_n(D)}(E))^\times$) as well as its restriction to any subgroup.
 \paragraph{}We consider the following conjecture of Prasad and Takloo-Bighash: 
\begin{conje}[\cite{P-TB}, Conjecture 1]
Let $A=\mathcal{M}_n(D)$. Let $\pi$ be an irreducible admissible representation of $A^\times$ such that the corresponding representation (via Jacquet-Langlands) of $GL(nd,F)$ is generic with central 
character $\omega_\pi$. Let $\mu$ be a character of $E^\times$ such that $\mu^{\frac{nd}{2}}|_{F^\times}=\omega_\pi$. If the character $\mu\circ N_{rd,E}$ of $(C_{\mathcal{M}_n(D)}(E))^\times$ appears as a quotient in $\pi$ restricted to $(C_{\mathcal{M}_n(D)}(E))^\times$, then :
\begin{enumerate}
\item the Langlands parameter of $\pi$ takes values in $GSp_{nd}(\mathbb{C})$ with similitude factor $\mu_{|F^\times}$.
\item the epsilon factor $\epsilon(\frac{1}{2},\pi\otimes Ind_E^F(\mu^{-1}))=(-1)^n\omega_{E/F}(-1)^{\frac{nd}{2}}\mu(-1)^{\frac{nd}{2}}$ ($\omega_{E/F}$ is the quadratic character of $F^\times$ with kernel the norms of $E^\times$).
\end{enumerate}
If $\pi$ is a discrete series representation of $A^\times$, then these two conditions are necessary and sufficient for the character $\mu\circ N_{rd,E}$ of $(C_{\mathcal{M}_n(D)}(E))^\times$ to appear as a quotient in $\pi$ restricted to $(C_{\mathcal{M}_n(D)}(E))^\times$.
\end{conje}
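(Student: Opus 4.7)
The plan is to prove both directions of the conjecture by combining the Jacquet--Langlands correspondence with distinction results for the split inner form $GL(nd,F)$, after a preliminary structural reduction. The centralizer $H:=(C_{\mathcal{M}_n(D)}(E))^\times$ is the unit group of a central simple $E$-algebra, and the reduced norm $N_{rd,E}$ identifies with its determinant; moreover one has to split the analysis into the \emph{inert} case ($E/F$ a quadratic field extension) and the \emph{split} case ($E=F\times F$), since $H$ and the character $\mu\circ N_{rd,E}$ behave differently in each.

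For the necessity direction, valid for any irreducible admissible $\pi$: in the split case $H$ is (up to conjugacy) a Levi subgroup of $A^\times$, so distinction by the character $\mu\circ N_{rd,E}$ is controlled by parabolic induction and the Bernstein--Zelevinsky geometric lemma applied to the supercuspidal support of $\pi$; in the inert case, after Jacquet--Langlands transfer to $GL(nd,F)$, distinction by $GL(nd/2,E)$ twisted by the corresponding character has been characterized in terms of the Langlands parameter factoring through $GSp_{nd}(\mathbb{C})$ with similitude $\mu_{|F^\times}$ by work of Matringe and others. Since the Jacquet--Langlands transfer preserves Langlands parameters, condition (1) transfers back to $\pi$ on $A^\times$. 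Condition (2) is then derived from (1) by exploiting the symplectic self-duality $\phi^\vee \cong \phi\otimes \mu_{|F^\times}^{-1}$ of the parameter $\phi$ of $\pi$, combined with standard $\epsilon$-factor formulas for tensor products and induced representations; the resulting sign involves $\omega_{E/F}(-1)$ and $\mu(-1)$ raised to appropriate powers, giving the asserted identity.

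For the sufficiency direction, assuming $\pi$ is a discrete series, the plan is to globalize $\pi$ as the local component at some place $v_0$ of a cuspidal automorphic representation $\Pi$ on $A^\times(\mathbb{A})$, whose Jacquet--Langlands transfer $\Pi'$ to $GL(nd)$ is cuspidal and carries a globally symplectic parameter with the prescribed similitude. The sign hypothesis (2), combined with the functional equation, forces non-vanishing of $L(1/2,\Pi'\otimes \mathrm{Ind}_E^F(\mu^{-1}))$. A Friedberg--Jacquet (split case) or Guo--Jacquet (inert case) global period integral on $GL(nd)$ then produces a nonzero global $H$-period, and a relative trace formula comparing $A^\times$ with $GL(nd)$ transports this global distinction back to the sought local distinction of $\pi$ at $v_0$.

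The main obstacle is the relative trace formula comparison required for the sufficiency direction: one must establish a matching of orbital integrals between the $H$-period on the inner form and its $GL$ counterpart, an analogue of the Guo--Jacquet fundamental lemma for arbitrary inner forms, which is currently known only in restricted cases. A secondary difficulty is the precise tracking of the signs $\omega_{E/F}(-1)^{nd/2}$ and $\mu(-1)^{nd/2}$ under globalization, in particular the careful choice of auxiliary local components so that no extraneous sign is introduced by the local epsilon factors away from $v_0$.
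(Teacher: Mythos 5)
Your proposal is a program for the full conjecture rather than a proof, and it contains genuine gaps that you yourself flag: the relative trace formula comparison between $A^\times$ and $GL(nd)$, and the Guo--Jacquet-type fundamental lemma for general inner forms, are not available in the required generality, so the sufficiency direction does not go through. The necessity direction is also not sound as written: the Jacquet--Langlands correspondence preserves Langlands parameters but does not transport period integrals, so ``after Jacquet--Langlands transfer to $GL(nd,F)$, distinction by $GL(nd/2,E)$ \ldots has been characterized'' both begs the question (that characterization on $GL(nd,F)$ is essentially the conjecture itself in the split-algebra case) and assumes a transfer of distinction that is precisely the hard comparison problem. Note also that in this paper $E$ is a quadratic \emph{field} extension throughout, so your split case $E=F\times F$ does not arise.

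The paper does not prove the general conjecture at all; it proves it only for the Steinberg representation $St(n,1)$, and by entirely local, explicit means. On the distinction side, it computes representatives of $P\backslash G/H$, applies Mackey theory and Frobenius reciprocity together with modulus-character identities ($\delta_{P_s^{\sigma_s}}$ versus $\delta_{P_s}$ on $M_s^{\sigma_s}$) to show that only the open orbit can support a $\tilde\mu$-equivariant functional, forcing $\mu_{|F^*}=1$ (and $\mu=1$ when $n$ is odd, $d$ even); it then constructs the invariant functional as a meromorphically continued open-orbit integral $I_{n,z}$ and decides whether it kills the subrepresentations $ind_P^G(1)$ by an explicit Tate/Godement--Jacquet $L$-factor computation. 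On the arithmetic side, it reformulates condition (2) via inductivity of $\epsilon$-factors and computes $\epsilon(\tfrac12,St(nd,\mu^{-1}))$ directly from the $\gamma$-factor factorization and the pole structure of $L(s,\mu^{-1})$, obtaining $+1$ when $\mu\neq 1$ and $-1$ when $\mu=1$, which matches the distinction criterion. If you want a correct write-up consistent with this paper, you should either restrict to the Steinberg case and follow such a local argument, or supply proofs of the global comparison results your plan presupposes.
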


Let us set $G=GL(n,D)$ and $H=(C_{\mathcal{M}_n(D)}(E))^\times$. We consider this conjecture for the Steinberg representation and we prove it by first  
establishing some $H$-distinction results. The main results of this paper are :

\begin{theo}
Let $n$ be a positive integer and let $\mu$ be a character of $E^*$. $E$ is embedded in $\mathcal{M}_n(D)$ if and only if $nd$ is even. We set $St(1)=St(n,1)$ the Steinberg representation of $G$ and 
$\tilde{\mu}:=\mu\circ N_{rd,E}$.
\begin{itemize}
\item If $d$ is even, $H=(C_{\mathcal{M}_n(D)}(E))^\times=GL(n,C_D(E))$ and $St(n,1)$ is $\tilde{\mu}$-distinguished under $H$ if and only if
\begin{itemize}
\item $\mu_{|F^*}=1$ and $\mu\neq1$ if $n$ is even.
\item $\mu=1$ if $n$ is odd.
\end{itemize}
\item If $d$ is odd and $n$ is even, $H=(C_{\mathcal{M}_n(D)}(E))^\times=GL(n/2,D\otimes_FE)$ and $St(n,1)$ is $\tilde{\mu}$-distinguished under $H$ if and only if $\mu_{|F^*}=1$ and $\mu\neq1$.
\end{itemize}
\end{theo}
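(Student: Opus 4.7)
The plan is to analyze $\mathrm{Hom}_H(St(n,1), \tilde{\mu})$ by way of the standard alternating resolution of the Steinberg representation together with a Mackey-type decomposition of the resulting parabolic inductions, adapting the strategy used by Matringe in the split case ($D=F$, $n$ even). Let $P_J$ denote the standard parabolic of $G$ attached to a subset $J$ of the simple roots. The Steinberg satisfies the Grothendieck-group identity
$$[St(n,1)] \;=\; \sum_{J\subseteq\Delta}(-1)^{|\Delta\setminus J|}\bigl[\mathrm{Ind}_{P_J}^G(\mathbf 1)\bigr],$$
and after establishing enough $\mathrm{Ext}$-vanishing (which follows from the discrete series nature of $St$), this reduces $\dim\mathrm{Hom}_H(St(n,1),\tilde\mu)$ to the Euler characteristic
$$\sum_{J\subseteq\Delta}(-1)^{|\Delta\setminus J|}\,\dim\mathrm{Hom}_H\bigl(\mathrm{Ind}_{P_J}^G(\mathbf 1),\tilde\mu\bigr).$$

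Each summand is then evaluated by Mackey's theorem: it decomposes over $H\backslash G/P_J$, and for each double coset $HgP_J$ the contribution is $1$ or $0$ according to whether $\tilde\mu$ is trivial on the stabilizer $H\cap gP_Jg^{-1}$. The heart of the argument is therefore a classification of $H$-orbits on each partial flag variety $G/P_J$ together with the identification of the stabilizers. In Case 1 ($d$ even, so $E\hookrightarrow D$), the orbits correspond to ordered refinements of the composition of $n$ coming from $J$, together with combinatorial data recording how $E$ acts on each block; in Case 2 ($d$ odd, $n$ even, $E\not\hookrightarrow D$) the orbits correspond to pairings of blocks, reflecting the fact that $E$ embeds only through pairs of copies of $D$. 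In both cases the stabilizers are standard parabolics of $H$, and the (non-)vanishing of the local $\mathrm{Hom}$ reduces to asking whether $\mu$ restricted to a certain explicit subtorus is trivial.

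The two conditions in the theorem then fall out as follows. The constraint $\mu_{|F^*}=1$ is forced by matching the trivial central character of $St(n,1)$ against $\tilde\mu$ on the centre $F^*\subset Z(H)$, using the identity $N_{rd,E}(\lambda)=\lambda^{nd/2}$ for $\lambda\in F^*$. The more subtle dichotomy on $\mu$ itself --- namely $\mu\neq 1$ when $n$ is even, $\mu=1$ when $n$ is odd in Case 1, and $\mu\neq 1$ in Case 2 --- emerges from signed cancellation in the Euler characteristic: orbits whose stabilizer allows a non-trivial $\mu$ survive exactly when the parity of the composition of $n$ attached to $J$ aligns with that of $n$, while the trivial-$\mu$ ``closed'' orbits contribute an extra $1$ only in the parities singled out by the theorem.

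I expect the main obstacle to be the explicit orbit-and-stabilizer enumeration, which is genuinely combinatorial and structurally quite different between Cases 1 and 2 because of the distinct nature of $H$ as a linear group over $C_D(E)$ versus $D\otimes_F E$. A secondary issue is justifying the $\mathrm{Ext}$-vanishing step used to pass from the Euler characteristic to an honest dimension; if this cannot be carried out cleanly in the required generality, I would instead handle the sufficient direction by constructing an explicit $H$-invariant linear form on a concrete model of $St(n,1)$ (for instance its quotient realization from the minimal-parabolic principal series, where the form can be written as an integral over an open $H$-orbit), and the necessary direction by the central-character constraint combined with a parity obstruction read off from the Jacquet module of $St(n,1)$ along a suitable maximal parabolic.
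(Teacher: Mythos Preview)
Your primary strategy---the alternating-sum/Euler-characteristic computation over all $P_J$---is genuinely different from what the paper does, and it has two gaps that you should be aware of.

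\medskip

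\textbf{First gap: the central-character step is not strong enough.} You write that $\mu_{|F^*}=1$ is forced by matching $\tilde\mu$ against the trivial central character, via $N_{rd,E}(\lambda)=\lambda^{nd/2}$ for $\lambda\in F^*$. But that identity only yields $\mu^{nd/2}{}_{|F^*}=1$, which is exactly the standing hypothesis in the conjecture and is strictly weaker than $\mu_{|F^*}=1$ (e.g.\ for $nd=4$ it allows $\mu_{|F^*}$ to be any quadratic character). In the paper the constraint $\mu_{|F^*}=1$ is obtained not from the centre but from the orbit analysis on $P_\emptyset\backslash G/H$: one shows that only the ``anti-diagonal'' orbit (attached to $s_0=\left(\begin{smallmatrix}&&1\\&\iddots&\\1&&\end{smallmatrix}\right)$) can support a nonzero $\tilde\mu$-equivariant form, and on that orbit the stabiliser condition reads $\mu(N_{rd,E}(u_0^{-1}M_\emptyset^{\sigma_0}u_0))=1$; a separate computation identifies the image of $N_{rd,E}$ on that subgroup as exactly $F^*$ (resp.\ $E^*$ when $n$ is odd and $d$ even). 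This is where the real work is, and your proposal does not have a substitute for it.

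\medskip

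\textbf{Second gap: the $\mathrm{Ext}$-vanishing is asserted, not proved.} ``Follows from the discrete series nature of $St$'' is not an argument; there is no general vanishing theorem of the form $\mathrm{Ext}^i_H(\pi,\tilde\mu)=0$ for $i>0$ when $\pi$ is discrete series and $H$ is a symmetric subgroup. Without it, the alternating sum computes only an Euler characteristic, not $\dim\mathrm{Hom}_H(St,\tilde\mu)$. The paper sidesteps this entirely: it first proves $\dim\mathrm{Hom}_H(ind_{P_\emptyset}^G(\mathbf 1),\tilde\mu)\le 1$ (by the orbit analysis just described), hence the same bound for $St$; then it builds an explicit $\tilde\mu$-equivariant linear form $L_0$ on $ind_{P_\emptyset}^G(\mathbf 1)$ by Bernstein's meromorphic continuation of an integral over the open orbit, and decides distinction of $St$ by checking whether $L_0$ vanishes on each $ind_P^G(\mathbf 1)$ with $P$ of type $(1,\dots,1,2,1,\dots,1)$. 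The non-distinction for $\mu=1$, $n$ even is settled by an explicit Godement--Jacquet $L$-value computation showing $L_0$ does \emph{not} vanish on one such $ind_P^G(\mathbf 1)$.

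\medskip

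Your fallback plan in the last paragraph---explicit integral on the open orbit for sufficiency, and an obstruction read off a suitable model for necessity---is essentially the paper's method, except that the necessary direction in the paper is handled via the $P_\emptyset\backslash G/H$ classification (indexed by symmetric matrices $s\in I(\bar n)$ with prescribed row sums, and with even diagonal in the $d$ odd case) and the modulus-character identity $(\delta_{P_s^{\sigma_s}})_{|M_s^{\sigma_s}}=(\delta_{P_s}^{1/2})_{|M_s^{\sigma_s}}$ (resp.\ $=\delta_{P_s}$), rather than by Jacquet modules. If you pursue the fallback, that is the route to take; the Euler-characteristic route would require substantially more input than you have supplied.
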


\begin{theo}(Prasad and Takloo-Bighash conjecture, Steinberg case) Let $A=\mathcal{M}_n(D)$ and $\pi=St(n,1)$ which is an irreducible admissible representation of $A^\times=GL(n,D)=G$. Recall that $\pi$ corresponds via Jacquet-Langlands correspondence to 
$St(nd,1)$ (the Steinberg representation of $GL(nd,F)$) with central character $\omega_\pi=1$. Let $\mu$ be a character of $E^\times$ such that $\mu^{\frac{nd}{2}}|_{F^\times}=\omega_\pi=1$. Then, the character $\mu\circ N_{rd,E}$ of 
$H=(C_{\mathcal{M}_n(D)}(E))^\times$ appears as a quotient in $\pi$ restricted to $H$ if and only if :
\begin{enumerate}
 \item the Langlands parameter of $\pi$ takes values in $GSp_{nd}(\mathbb{C})$ with similitude factor $\mu_{|F^\times}$.
 \item the epsilon factor satisfies $\epsilon(\frac{1}{2},\pi\otimes Ind_E^F(\mu^{-1}))=(-1)^n\omega_{E/F}(-1)^{\frac{nd}{2}}$ (where $\omega_{E/F}$ is the quadratic character of $F^\times$ with kernel the norms of $E^\times$).
\end{enumerate}
\end{theo}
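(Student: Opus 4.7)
The plan is to deduce Theorem 2 from Theorem 1 by verifying that, under the hypothesis $\mu^{nd/2}|_{F^\times}=1$, conditions (1) and (2) are jointly equivalent to the distinction criteria proved in Theorem 1. The argument splits into identifying condition (1), computing the $\epsilon$-factor in condition (2), and a case-by-case comparison.

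For condition (1), I recall that the Jacquet--Langlands transfer of $\pi=St(n,1)$ is $St(nd,1)$, whose Langlands parameter is $1\boxtimes\mathrm{Sym}^{nd-1}\colon W_F\times SL_2(\mathbb{C})\to GL_{nd}(\mathbb{C})$. Since $nd$ is even, $nd-1$ is odd and $\mathrm{Sym}^{nd-1}$ is the symplectic irreducible representation of $SL_2(\mathbb{C})$ in that dimension; hence the parameter already factors through $Sp_{nd}(\mathbb{C})$ with trivial similitude character. So condition (1) is equivalent to $\mu|_{F^\times}=1$, matching exactly the first half of each of the three distinction criteria of Theorem 1.

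For condition (2), assume $\mu|_{F^\times}=1$, set $\sigma=\mathrm{Ind}_E^F(\mu^{-1})$, and use Jacquet--Langlands to write $\epsilon(1/2,\pi\otimes\sigma)=\epsilon(1/2,St(nd,1)\otimes\sigma)$. I would compute the right-hand side via the Weil--Deligne formalism: the parameter of $St(nd,1)\otimes\sigma$ has semisimplification $\bigoplus_{i=0}^{nd-1}|\cdot|^{(nd-1)/2-i}\otimes\sigma$ together with full monodromy, and Deligne's formula reduces $\epsilon$ to a product $\prod_{i=0}^{nd-1}\epsilon(\tfrac{nd}{2}-i,\sigma,\psi)$ times a Frobenius determinant on an inertia-invariant quotient. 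One then splits into the reducible case $\mu=1$, in which $\sigma=1\oplus\omega_{E/F}$ decomposes the product, and the irreducible case $\mu\neq 1$ with $\mu|_{F^\times}=1$, handled via the inductivity of $\epsilon$-factors in the form $\epsilon(s,\sigma,\psi)=\lambda(E/F,\psi)\,\epsilon_E(s,\mu^{-1},\psi_E)$. In each subcase the computation yields an explicit $\pm 1$.

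Finally, one matches these signs against the predicted value $(-1)^n\omega_{E/F}(-1)^{nd/2}$ in the three configurations of Theorem 1 ($d$ even and $n$ even; $d$ even and $n$ odd; $d$ odd and $n$ even). In each, condition (2) is expected to hold precisely when the second half of the corresponding distinction criterion of Theorem 1 holds, namely $\mu\neq 1$ when $n$ is even and $\mu=1$ when $n$ is odd. The main obstacle is this last step: the Steinberg $\epsilon$-factor and the Langlands $\lambda$-constant both contribute signs that depend on the ramification of $E/F$ and on $\omega_{E/F}(-1)$, and the bookkeeping must show that the overall parity matches $(-1)^n\omega_{E/F}(-1)^{nd/2}$ on the nose. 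Everything else is a direct transcription of Theorem 1.
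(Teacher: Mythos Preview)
Your overall strategy---reduce to Theorem~1 by showing that condition~(1) is exactly $\mu|_{F^\times}=1$ and that, under this assumption, condition~(2) picks out the correct extra constraint on $\mu$---is the same as the paper's. Your treatment of condition~(1) via the parameter $\mathrm{Sym}^{nd-1}$ landing in $Sp_{nd}$ is also the paper's.

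Where you diverge is in the handling of condition~(2), and this is precisely where you leave a gap. You propose to compute $\epsilon(\tfrac12,St(nd,1)\otimes\mathrm{Ind}_E^F(\mu^{-1}))$ on the $F$-side via Deligne's formula, splitting into $\mu=1$ versus $\mu\neq 1$ and carrying the $\lambda(E/F,\psi)$ constants through; you then acknowledge that the resulting sign bookkeeping (depending on the ramification of $E/F$ and on $\omega_{E/F}(-1)$) is the ``main obstacle'' and do not carry it out. The paper sidesteps this entirely by applying inductivity \emph{once, globally}: since the Weil group acts trivially on $Sp(nd)$, one has
\[
\epsilon\bigl(\tfrac12,\,Sp(nd)\otimes\mathrm{Ind}_E^F(\mu^{-1})\bigr)=\epsilon\bigl(\tfrac12,\,\mathrm{Ind}_E^F(Sp(nd)\otimes\mu^{-1})\bigr)=\omega_{E/F}(-1)^{nd/2}\,\epsilon_E\bigl(\tfrac12,\,Sp(nd)\otimes\mu^{-1}\bigr),
\]
so the factor $\omega_{E/F}(-1)^{nd/2}$ cancels against the right-hand side of~(2), and condition~(2) becomes simply $\epsilon_E(\tfrac12,St(nd,\mu^{-1}))=(-1)^n$ for the Steinberg of $GL(nd,E)$. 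This eliminates all dependence on the arithmetic of $E/F$ in one stroke. The remaining $E$-side computation is then uniform: the paper uses the key observation that $\mu|_{F^\times}=1$ forces $\bar\mu=\mu^{-1}$, hence $L(s,\mu)=L(s,\mu^{-1})$ and $\epsilon(s,\mu^{-1})\epsilon(1-s,\mu^{-1})=1$, so the product of $\gamma$-factors telescopes and one reads off $\epsilon_E(\tfrac12,St(nd,\mu^{-1}))=1$ if $\mu\neq 1$ and $=-1$ if $\mu=1$. Your route would eventually reach the same destination, but the paper's reduction is what turns the ``obstacle'' you flag into a two-line computation.
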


Notice that when the character $\mu\circ N_{rd,E}$ of $H$ extends to $G$, the conjecture can be reformulated in the following more appealing way, and this is the version of the conjecture considered in \cite{FMW}.

\begin{theo}
\begin{sloppypar}
  Let $St(n,\chi)$ be the Steinberg representation of $G=GL(n,D)$. $St(n,\chi)$ is $H$-distinguished if and only if it is symplectic and $\epsilon(\frac{1}{2},BC_E(St(n,\chi)))=(-1)^n$ (where $BC_E$ denotes the base change to $E$).
\end{sloppypar}
\end{theo}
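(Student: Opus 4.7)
The strategy is to deduce this reformulated statement from Theorem 2 by a twisting argument combined with inductivity of epsilon factors. The starting observation is that for any $h\in H$, one has $N_{rd,F}(h)=N_{E/F}(N_{rd,E}(h))$; hence a character of $H$ of the form $\mu\circ N_{rd,E}$ extends to $G$ via some $\eta\circ N_{rd,F}$ if and only if $\mu=\eta\circ N_{E/F}$ for a character $\eta$ of $F^{\times}$. Writing $\eta=\chi^{-1}$, this lets us set $\mu:=\chi^{-1}\circ N_{E/F}$. Since $St(n,\chi)=St(n,1)\otimes(\chi\circ N_{rd,F})$, restriction to $H$ yields
\[
St(n,\chi)|_H \;=\; St(n,1)|_H\otimes\bigl((\chi\circ N_{E/F})\circ N_{rd,E}\bigr),
\]
so $St(n,\chi)$ is $H$-distinguished (trivial character as quotient) if and only if $St(n,1)$ is $\widetilde{\mu}$-distinguished for this $\mu$. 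At this point Theorem 2 applies, provided $\mu^{nd/2}|_{F^{\times}}=\chi^{-nd}=1$, which will be implied by the symplectic condition below.

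The next step is to translate condition (1) of Theorem 2 into the \emph{symplectic} condition. The Langlands parameter of $St(n,1)$ on the $GL(nd,F)$-side is $\phi:=\mathrm{Sym}^{nd-1}$, which, because $nd$ is even, already takes values in $Sp_{nd}(\mathbb{C})$ with trivial similitude. Its twist $\chi\phi$, the parameter of $St(n,\chi)$, then lies in $GSp_{nd}(\mathbb{C})$ with similitude character $\chi^{2}$. Thus $St(n,\chi)$ is symplectic (trivial similitude) if and only if $\chi^{2}=1$, which is exactly the equation $\mu|_{F^{\times}}=\chi^{-2}=1$ required by condition (1) of Theorem 2 for the parameter of $St(n,1)$. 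So the two symplecticity conditions coincide, and in particular $\chi^{nd}=1$, vindicating our application of Theorem 2.

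The heart of the argument is translating condition (2). By inductivity of epsilon factors, and using $\mathrm{Ind}_{E}^{F}(\mu^{-1})=\mathrm{Ind}_{E}^{F}(\chi\circ N_{E/F})=\chi\oplus\chi\omega_{E/F}$, we obtain
\[
\epsilon\!\left(\tfrac{1}{2},St(n,1)\otimes\mathrm{Ind}_{E}^{F}(\mu^{-1})\right) \;=\; \epsilon\!\left(\tfrac{1}{2},BC_{E}(St(n,1))\otimes(\chi\circ N_{E/F})\right)\cdot\lambda(E/F,\psi)^{nd},
\]
and the twisted inner factor equals $\epsilon(\tfrac{1}{2},BC_{E}(St(n,\chi)))$ since base change commutes with twisting by pull-backs. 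The final task is to verify that the Langlands constant contribution $\lambda(E/F,\psi)^{nd}$ matches the factor $\omega_{E/F}(-1)^{nd/2}$ appearing on the right-hand side of condition (2): this follows from the standard identity $\lambda(E/F,\psi)^{2}=\omega_{E/F}(-1)$, raised to the $nd/2$-th power. Once this matching is checked, Theorem~2's condition (2) rearranges exactly to $\epsilon(\tfrac{1}{2},BC_{E}(St(n,\chi)))=(-1)^{n}$.

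\textbf{Main obstacle.} The twisting reduction and the parameter computation are formal; the genuine content is the clean identification of the Langlands $\lambda$-constant with $\omega_{E/F}(-1)^{nd/2}$, together with the careful verification that $\mu(-1)^{nd/2}=1$ (which holds because $\mu(-1)=\chi^{-1}(N_{E/F}(-1))=1$). I expect the delicate bookkeeping of signs and $\lambda$-factors—especially ensuring the normalizations of epsilon factors on the two sides of the inductivity relation match those used in the conjecture—to be where most of the care is needed.
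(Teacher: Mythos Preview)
Your proposal is correct and follows essentially the same route as the paper. The paper deduces this statement from Theorem~\ref{TB} via exactly the twisting/reformulation you outline: it identifies $H$-distinction of $St(n,\chi^{-1})$ with $\tilde{\mu}$-distinction of $St(n,1)$ for $\mu=\chi\circ N_{E/F}$, translates condition~(1) into symplecticity of $St(n,\chi^{-1})$, and rewrites the epsilon factor in condition~(2) using the projection formula and inductivity to obtain $\omega_{E/F}(-1)^{nd/2}\epsilon(\tfrac12,BC_E(St(n,\chi^{-1})))$, so that condition~(2) becomes $\epsilon(\tfrac12,BC_E(St(n,\chi^{-1})))=(-1)^n$; your $\lambda(E/F,\psi)^{nd}=\omega_{E/F}(-1)^{nd/2}$ step and your observation that $\mu(-1)=1$ are exactly the ingredients the paper uses (the latter implicitly, since the $\mu(-1)^{nd/2}$ factor is already dropped in the statement of Theorem~\ref{TB}).
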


In this latter reference, the authors partially prove the conjecture 
for supercuspidal representations of $GL(n,\mathbb{H})$ 
 where $\mathbb{H}$ is the quaternion division algebra over a $p$-adic field.

 \paragraph{}We recall that, for $\mu$ a character of $H$, a representation $(\pi, V)$ of $G$ is said to be $\mu$-distinguished under $H$ ($H$-distinguished if $\mu=1$ and just distinguished if 
 there is no possible confusion) if the space of $H$-homomorphisms $\mathrm{Hom}_H(\pi,\mu)$ is non-zero \textit{i.e.} if there exists a non-zero linear form $L$ on $V$ such that $L(\pi(h).v)=\mu(h)L(v)$ 
 for all $h\in H$, for all $v\in V$. \\
 Now, any character of $GL(n,D)$ can be written as $\chi\circ N_{rd,F}$ where $\chi$ is a character of $F^*$. For $\chi$ a character of $F^*$, 
 the Steinberg representation $St(n,\chi)$ of $GL(n,D)$ (denoted $St(\chi)$ if the context is clear) 
 is given by $ind_{P_\emptyset}^G(\chi\circ N_{rd,F})/\underset{P}\sum ind_P^G(\chi\circ N_{rd,F})$ where $P_\emptyset$ denotes the minimal standard parabolic subgroup of $G$ and where the standard parabolic subgroups $P$ 
 in the sum correspond to a partition of $n$ with all elements equal to $1$ except one of them which is equal to $2$. 
 
 \paragraph{}In the first part of this paper, we recall some useful elementary results to study the distinction, mainly Frobenius reciprocity and Mackey theory.\\
 
 In Sections 2 and 3, we study the distinction of the Steinberg representation according to the parity of $d$. 
 We follow the method used by Matringe in \cite{M.16} which we recall now. 
 We start by determining a set of representatives of double cosets $P\backslash G/H$ for $P$ a standard parabolic 
 subgroup of $G$. These representatives allow us to apply Mackey theory, which is, with Frobenius reciprocity and modulus characters computations, an 
 essential tool to establish a necessary condition for the distinction of the Steinberg representation. \\
First, we show that $\mathrm{Hom}_H(ind_{P_\emptyset}^G(1),\tilde{\mu})$ is at most one dimensional, hence $\mathrm{Hom}_H(St(1),\tilde{\mu})$ as well. 
It moreover implies that $\mathrm{dim}(\mathrm{Hom}_H(St(1),\tilde{\mu}))=1$ if and only if there is a nonzero $\tilde{\mu}$-equivariant linear form on $ind_{P_\emptyset}^G(1)$ 
which vanishes at each term of $\underset{P\text{ of type }(1,\dots,1,2,1,\dots,1)}\sum ind_P^G(1)$. In particular the necessary condition on $\mu$ for 
$St(1)$ to be $\tilde{\mu}$-distinguished will come from the fact that $ind_{P_\emptyset}^G(1)$ must also be $\tilde{\mu}$-distinguished in this case.
On the other hand, if $ind_{P_\emptyset}^G(1)$ is $\tilde{\mu}$-distinguished, we have an explicit $\tilde{\mu}$-equivariant linear form on its space given by an integral. To get our sufficient condition, we 
will show that this linear form does not vanish on $ind_{P_\emptyset}^G(1)$ for a well chosen standard parabolic subgroup $P$ of type $(1,\dots,1,2,1,\dots,1)$ when 
$\mu$ not of the correct form.\\

In the last section, we explain how to reformulate the Prasad and Takloo-Bighash conjecture for the Steinberg representation and we do the epsilon factor calculation in order to prove it.

\paragraph{Notation}
We let $P_\emptyset$ denote the minimal standard parabolic subgroup (of upper triangular matrices), $M_\emptyset$ its standard Levi subgroup and $N_\emptyset$ its unipotent radical. 
We set $P_\emptyset^-$ the subgroup of $G$ of lower triangular matrices. We denote by $\Phi$ the roots of the center $Z({M_{\emptyset}})$ of ${M_{\emptyset}}$ acting on the Lie algebra $Lie(G)$, by $\Phi^+$ those corresponding to the restriction 
of this action on $Lie(N_\emptyset)$, and by $\Phi^-$ those corresponding to the restriction 
of this action on $Lie(N_\emptyset^-)$. In particular $Lie(N_\emptyset)=\oplus_{\alpha \in \Phi^+} Lie(N_\alpha)$ and 
$Lie(N_\emptyset^-)=\oplus_{\alpha \in \Phi^-} Lie(N_\alpha)$, with 
obvious notation. If $P$ is a parabolic subgroup of $G$ containing $P_\emptyset$, with standard Levi factor $M$, we denote by $\Phi_M$ 
the roots of $Z({M_{\emptyset}})$ on $Lie(M)$. We define $\Phi_M^+$ and $\Phi_M^-$ in a similar fashion as above.
\paragraph{}
 For $\chi$ a character of a parabolic subgroup $P$, $ind_P^G(\chi)$ is the un-normalized parabolic induction. We let $\Delta_X$ denote the modular character of a locally compact totally disconnected topological 
 group $X$ \textit{i.e.} such that $\lambda(xg)=\Delta_X(g)\lambda(x)$ $\forall x,g\in X$, for $\lambda$ a left Haar measure on $X$. We set $\delta_X:=\Delta_X^{-1}$.
 
 \paragraph{Acknowledgements}
 I am very grateful to my supervisor Nadir Matringe for all his help and his essential explanations. I also thank Paul Broussous, my co-supervisor, for his useful explanations. 
 
 \section{General results}
 
 We gather together some useful results (written as in \cite{M.16}). 
 
 \paragraph{}We only consider smooth representations on complex vector spaces. 
Let $X$ be a locally compact totally disconnected space, and $L$ a locally compact totally disconnected group acting 
continuously and properly on $X$. If $\chi$ is a character of $L$, we denote 
by $\mathcal{C}_c^\infty(L\backslash X,\chi)$ the space of smooth functions on $X$, with compact support mod $L$, and which transform by $\chi$ under left translation by elements of $L$. If $X$ is a group $Q$ which contains $L$, then we write 
$ind_{L}^Q(\chi)$ for $\mathcal{C}_c^\infty(L\backslash Q,\chi)$, which is a representation of $Q$ by right translation.
 We shall often use the following two theorems, which are respectively Frobenius reciprocity and Mackey theory for compactly induced representations. The first one is a consequence of Proposition 2.29 of \cite{BZ.76}

\begin{prop}\label{Frob}
Let $\chi$ be a character of $L$, then the vector space $Hom_Q(ind_{L}^Q(\chi),\mu)$ is isomorphic to 
$Hom_L(\Delta\chi,\mu)$, where $\Delta$ is the quotient of the modulus character of $L$ by that of $Q$.
\end{prop}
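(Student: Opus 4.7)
The plan is to deduce this from \cite{BZ.76}, Prop. 2.29, which is the abstract Frobenius adjunction for compact induction; the substance of the argument lies in tracking the modulus twist when the target is a one-dimensional character.

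I would realize the isomorphism via the averaging map
$$av : \mathcal{C}_c^\infty(Q) \to ind_L^Q(\chi), \qquad av(\phi)(q) = \int_L \chi(l)^{-1}\phi(lq)\, dl,$$
which is surjective by a partition-of-unity argument on $L\backslash Q$, using the properness of the $L$-action. Given $T \in \mathrm{Hom}_Q(ind_L^Q(\chi),\mu)$, the pullback $D_T := T \circ av$ is a linear form on $\mathcal{C}_c^\infty(Q)$. A direct computation using the defining relation $\lambda(xg) = \Delta_X(g)\lambda(x)$ for left Haar measures on $L$ and $Q$ shows that $D_T$ is left-$L$-equivariant with character $\chi \cdot \Delta_L^{-1}$ and right-$Q$-equivariant with character $\mu$; conversely, any such bi-equivariant form factors through $av$ to give a well-defined element of $\mathrm{Hom}_Q(ind_L^Q(\chi),\mu)$. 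This reduces the problem to describing the space of $(\chi\Delta_L^{-1},\mu)$-bi-equivariant linear forms on $\mathcal{C}_c^\infty(Q)$.

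Next I would show that this space is naturally isomorphic to $\mathrm{Hom}_L(\Delta\chi,\mu)$. Right-$Q$-equivariance by $\mu$, together with transitivity of right translation on $Q$, forces such a form to be a scalar multiple of integration against a single explicit function on $Q$ proportional to $\mu \cdot \Delta_Q^{-1}$. Testing the remaining left-$L$-equivariance at the identity yields the pointwise constraint $\chi(l)\Delta_L(l)^{-1} = \mu(l)\Delta_Q(l)^{-1}$ for every $l \in L$, i.e.\ $\Delta\chi = \mu|_L$ with $\Delta = \Delta_L/\Delta_Q|_L$. The space of such forms is therefore one-dimensional when this relation holds and zero otherwise, which is precisely $\mathrm{Hom}_L(\Delta\chi,\mu)$. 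The main obstacle is really only the careful bookkeeping of the modulus characters, whose signs depend on the convention $\delta_X = \Delta_X^{-1}$ fixed in the Notation paragraph; once that bookkeeping is done, the statement is a formal specialisation of \cite{BZ.76}, Prop. 2.29 to the character case.
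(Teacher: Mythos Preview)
Your proposal is correct and follows exactly the route the paper indicates: the paper does not actually write out a proof of this proposition at all, but simply records it as ``a consequence of Proposition 2.29 of \cite{BZ.76}''. Your sketch supplies the details behind that citation --- the surjective averaging map from $\mathcal{C}_c^\infty(Q)$ onto $ind_L^Q(\chi)$ and the modulus-character bookkeeping --- so you are expanding, not diverging from, the paper's approach.
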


The next one is a consequence of Theorem 5.2 of \cite{BZ.77}. %%Notice that by Corollary 6.16 of \cite{HW}, if $P$ is a parabolic 
%%subgroup of $G$, then $P\backslash G/H$ is finite (see also Sections ...). 

\begin{prop}\label{Mackey}
Let $P$ be a parabolic subgroup of $G$, and $\mu$ be a character of $P$. Take a set of representatives $(u_1,\dots,u_r)$ 
of $P\backslash G/ H$, ordered such that $X_i=\coprod_{k=1}^i Pu_k H$ is open in $G$ for each $i$. Then $ind_P^G(\mu)$ is filtered by 
the $H$-submodules $\mathcal{C}_c^\infty(P\backslash X_i,\mu)$, and 
$$\mathcal{C}_c^\infty(P\backslash X_i,\mu)/\mathcal{C}_c^\infty(P\backslash X_{i-1},\mu)\simeq 
\mathcal{C}_c^\infty(P\backslash Pu_i H,\mu).$$
\end{prop}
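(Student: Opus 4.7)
My strategy is to present Proposition \ref{Mackey} as a direct unpacking of the Bernstein--Zelevinsky $\ell$-sheaf formalism: interpret $ind_P^G(\mu)=\mathcal{C}_c^\infty(P\backslash G,\mu)$ as the space of compactly supported global sections of an $H$-equivariant $\ell$-sheaf on the $\ell$-space $P\backslash G$, stratify the base by the $H$-orbits $Pu_iH$, and read the filtration off the resulting open/closed decompositions.

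The first step is to justify the ordering hypothesis, namely that the representatives can be labeled so that each $X_i=\coprod_{k\le i}Pu_kH$ is open in $G$. Once one knows (as will be shown in Sections 2 and 3 by explicit representatives) that $P\backslash G/H$ is finite, it carries a specialization partial order in which the closure of any double coset is a union of lower ones; taking any linear extension of the opposite order produces an admissible labeling, since at each step $Pu_iH$ is open in $G\setminus X_{i-1}$ and hence $X_i$ is open in $G$ by induction.

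For each $i$ the pair $(X_{i-1},Pu_iH)$ is an open/closed decomposition of $X_i$, with $X_{i-1}$ open in $X_i$ and $Pu_iH$ closed in $X_i$. Extension by zero provides an $H$-equivariant embedding $\mathcal{C}_c^\infty(P\backslash X_{i-1},\mu)\hookrightarrow\mathcal{C}_c^\infty(P\backslash X_i,\mu)$, and restriction to $Pu_iH$ gives an $H$-equivariant map to $\mathcal{C}_c^\infty(P\backslash Pu_iH,\mu)$ whose kernel consists exactly of sections supported on $X_{i-1}$. The only nontrivial point is surjectivity of the restriction map, and this is precisely the content of Theorem 5.2 of \cite{BZ.77}: a smooth compactly supported section on a closed $\ell$-subspace lifts to a smooth compactly supported section on the ambient $\ell$-space. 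Combining these three facts yields the short exact sequence
\[
0\to \mathcal{C}_c^\infty(P\backslash X_{i-1},\mu)\to \mathcal{C}_c^\infty(P\backslash X_i,\mu)\to \mathcal{C}_c^\infty(P\backslash Pu_iH,\mu)\to 0,
\]
which is the desired filtration together with the identification of its successive quotients. The main obstacle is really just arranging the ordering of double cosets; once $P\backslash G/H$ is known to be finite, everything else is a clean application of the general $\ell$-space theory.
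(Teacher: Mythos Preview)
Your argument is correct and is precisely the standard unpacking of the reference the paper invokes: the paper gives no proof beyond citing Theorem~5.2 of \cite{BZ.77}, and your open/closed exact-sequence argument is exactly how that theorem yields the filtration. One small remark: the existence of an admissible ordering is part of the hypothesis of the proposition, so your first paragraph is helpful context rather than a required step.
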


Finally, we recall the following result from \cite{H-W.93}, which is Proposition 3.4 in there. It in particular implies that if 
$P$ contains a minimal $\tau$-split parabolic subgroup (see below), then $\mathcal{C}_c^\infty(P\backslash P H,\mu)$ is 
a subspace of $ind_P^G(\mu)$.

\begin{prop}\label{ouvert}
Let $P$ be a parabolic subgroup of $G$, and $\tau$ be an $F$-rational involution of $G$. The class $PH$ is open if and only 
if $P$ contains a minimal parabolic $\tau$-split subgroup $P'$ (which means that $\tau(P')$ and $P'$ are opposite parabolic subgroups).
\end{prop}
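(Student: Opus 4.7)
The plan is to translate openness of $PH$ at the identity into the Lie-algebra equality $\mathrm{Lie}(P) + \mathrm{Lie}(H) = \mathrm{Lie}(G)$, via the inverse function theorem for $p$-adic analytic manifolds applied to the multiplication map $P \times H \to G$, and then to handle each direction separately. Write $\mathfrak{g} = \mathrm{Lie}(G)$, $\mathfrak{p} = \mathrm{Lie}(P)$, $\mathfrak{h} = \mathrm{Lie}(H)$; since $\mathrm{char}(F)\neq 2$ and $\tau^{2}=\mathrm{id}$, one has $\mathfrak{h}=\mathfrak{g}^{\tau}$ (the $+1$-eigenspace) and a canonical splitting $\mathfrak{g}=\mathfrak{g}^{\tau}\oplus\mathfrak{g}^{-\tau}$.

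For the direction ($\Leftarrow$), I assume $P$ contains a minimal $\tau$-split parabolic $P'$, so $P'$ and $\tau(P')$ are opposite with common Levi $L=P'\cap\tau(P')$, and $\tau$ swaps the unipotent radicals, $\tau(\mathfrak{n}')=\mathfrak{n}'^{-}$. Given $X\in\mathfrak{n}'^{-}$, the decomposition $X=(X+\tau(X))-\tau(X)$ exhibits $X$ as a sum of an element of $\mathfrak{h}$ and an element of $\mathfrak{n}'\subset\mathfrak{p}'$; combined with $\mathfrak{g}=\mathfrak{p}'\oplus\mathfrak{n}'^{-}$ this gives $\mathfrak{g}=\mathfrak{p}'+\mathfrak{h}\subset\mathfrak{p}+\mathfrak{h}$, so $PH$ is open. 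Note that minimality of $P'$ is not used for this direction; only the $\tau$-split property is.

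For the direction ($\Rightarrow$), I assume $\mathfrak{p}+\mathfrak{h}=\mathfrak{g}$ and project along $\mathfrak{h}$ onto $\mathfrak{g}^{-\tau}$ to see that the composition $\mathfrak{p}\hookrightarrow\mathfrak{g}\twoheadrightarrow\mathfrak{g}^{-\tau}$ is surjective. The strategy is then to choose a maximal $(\tau,F)$-split torus $S\subset P$, that is, an $F$-split torus inside $P$ on which $\tau$ acts by inversion, and to build the parabolic $P'$ with Levi $Z_{G}(S)$ and unipotent radical defined by a choice of positive $S$-roots. Since $\tau$ negates every character of $S$, it interchanges positive and negative $S$-root spaces, so $\tau(P')=P'^{-}$ and $P'$ is $\tau$-split by construction. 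The surjectivity above is what guarantees that an $S$ of maximal dimension can actually be placed inside $P$, and that the corresponding $P'$ is contained in $P$ and minimal among $\tau$-split parabolics of $G$.

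The principal obstacle is this reverse direction: passing from the purely infinitesimal equality $\mathfrak{p}+\mathfrak{h}=\mathfrak{g}$ to the global statement that $P$ contains a \emph{minimal} $\tau$-split parabolic cannot be done by bare Lie-algebra manipulations. It requires invoking the Helminck--Wang structure theory of $p$-adic symmetric spaces, specifically the classification of $(\tau,F)$-split tori, conjugacy of maximal such tori, and the identification of minimal $\tau$-split parabolics with stabilisers of chambers in the restricted root system of a maximal $(\tau,F)$-split torus; the forward direction, by contrast, is essentially a one-line computation.
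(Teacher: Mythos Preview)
The paper does not give a proof of this proposition at all: it is stated as a quotation, introduced by ``we recall the following result from \cite{H-W.93}, which is Proposition 3.4 in there.'' So there is no argument in the paper to compare yours against; the paper's ``proof'' is a bare citation of Helminck--Wang.

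Your proposal therefore goes strictly further than the paper. The $(\Leftarrow)$ direction is correct and cleanly done: the identification of openness of $PH$ with $\mathfrak p+\mathfrak h=\mathfrak g$ via the submersion theorem is valid for these $F$-analytic groups, and your decomposition $X=(X+\tau(X))-\tau(X)$ for $X\in\mathfrak n'^{-}$ immediately gives $\mathfrak n'^{-}\subset\mathfrak h+\mathfrak p'$, hence $\mathfrak g=\mathfrak p'+\mathfrak h\subset\mathfrak p+\mathfrak h$. Your remark that minimality of $P'$ plays no role in this direction is also correct and worth noting.

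For $(\Rightarrow)$ you are honest that the sketch is not self-contained: the assertion that surjectivity of $\mathfrak p\twoheadrightarrow\mathfrak g^{-\tau}$ forces a maximal $(\tau,F)$-split torus into $P$, and that the associated $\tau$-split parabolic then lies inside $P$ and is minimal, is exactly the content one must extract from Helminck--Wang. Since you end up invoking \cite{H-W.93} for this direction anyway, your treatment and the paper's converge on the same reference; the difference is only that you have supplied an independent argument for the easy direction, whereas the paper defers both directions to the literature.
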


\section{Case $d$ even}

\subsection{Preliminaries}
We fix $F$ a non archimedean local field of characteristic not $2$ and $E$ a quadratic extension. We let $|\cdot|_F$ denote the normalized absolute value on $F$ and $N_{E/F}$ the norm map from $E^*$ to $F^*$. 
 Let $D$ be a central division $F$-algebra of dimension $d^2$ with $d=2d'$ an even positive integer. As $2$ divides $d^2$, $E$ can be seen as a subfield of $D$. Moreover, as car($F$)$\neq2$, there exists $\delta$ in $E$ with 
$\delta^2\in F\setminus F^2$ such that $E=F[\delta]$.\\
As $E$ is a subfield of $D$, we can consider the centralizer of $E$ in $D$ : $C_D(E)$. It is easy to see that $C_D(E)$ is a division $E$-algebra. The double centralizer property says that $[E:F][C_D(E):F]=[D:F]=d^2$ 
so $[C_D(E):F]=d^2/2$ and that $E=C_D(C_D(E))$ so the center of $C_D(E)$ is $E$. In summary, $D':=C_D(E)$ is a central divsion $E$-algebra of dimension $d^2/4=d'^2$. 
We recall that $N_{rd,F}$ denotes the reduced norm on $GL(n,D)$ and $N_{rd,E}$ denotes the reduced norm of $GL(n,D')$ as well as its restriction to any subgroup.
\paragraph{}It is easy to see that $C_D(E)$ is the fixed points set $D^\sigma$ of $D$ under the involution $\sigma=int(\delta)$. As $\sigma$ is an involution different of identity, $-1$ is an eigenvalue of $\sigma$ so we can chose $\iota\in D$ 
(which is thus not in $D'$) such that $\sigma(\iota)=-\iota$ and $D=D'\bigoplus \iota D'$. We can easily see that $C_{\mathcal{M}_n(D)}(E)=\mathcal{M}_n(C_D(E))=\mathcal{M}_n(D')$.\\
If we denote again by $\sigma$ the involution of $GL(n,D)$ which is given by applying $int(\delta)$ to each entry of a matrix of $GL(n,D)$, then $H=GL(n,D')$ is the fixed points of $G=GL(n,D)$ under $\sigma$~: $H=G^\sigma$.

\subsection{Representatives of $P\backslash G/H$}
Let $P$ be a standard parabolic subgroup of $G=GL(n,D)$, corresponding to a partition $\bar{n}=(n_1,\dots,n_r)$ of $n$. We denote $I(\bar{n})$ the set of symmetric matrices with natural number entries such that the sum of the $i$-th 
row equals $n_i$ for all $i$ in $\{1,\dots,r\}$. Let $\mathcal{B}=(e_1,\dots,e_n)$ be the canonical basis of $D^n$. For $s=(n_{i,j})_{1\leq i,j\leq r}\in I(\bar{n})$, we set 
$$\mathcal{B}_{i,j}=(e_{(n_1+\dots+n_{i-1}+n_{i,1}+\dots+n_{i,j-1}+1)},\dots,e_{(n_1+\dots+n_{i-1}+n_{i,1}+\dots+n_{i,j-1}+n_{i,j})})$$
We set $u_s\in GL(n,D)$ the matrix in the basis $\mathcal{B}$ of the endomorphism of $D^n$ mapping Vect$(\mathcal{B}_{i,i})_D$ to itself and Vect$(\mathcal{B}_{i,j}\cup\mathcal{B}_{j,i})_D$ to itself for $i\neq j$ such that 
$u_s$ restricted to Vect$(\mathcal{B}_{i,i})_D$ is $I_{n_{i,i}}$ and $u_s$ restricted to Vect$(\mathcal{B}_{i,j}\cup\mathcal{B}_{j,i})_D$ is $\begin{pmatrix}I_{n_{i,j}}&-\iota I_{n_{i,j}}\\I_{n_{i,j}}&\iota I_{n_{i,j}}\end{pmatrix}$.

\begin{prop}
 The set of elements $u_s$ for $s$ in $I(\bar{n})$ as described above is a set of representatives of the double cosets $P\backslash G/H$.
\end{prop}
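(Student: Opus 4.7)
The plan is to follow the Matringe strategy of~\cite{M.16}, adapted from the Galois case $GL(n,E)/GL(n,F)$ to the present inner involution $\sigma=\mathrm{int}(\delta I)$ on $G=GL(n,D)$. The structural tool is the map $\phi:g\mapsto g\sigma(g)^{-1}$, which descends to an injection
\[
P\backslash G/H\hookrightarrow P\backslash X,\qquad X:=\{x\in G : x\sigma(x)=1\},
\]
where $P$ acts on $X$ by twisted conjugation $p\cdot x=px\sigma(p)^{-1}$. I would classify the $P$-orbits on $X$ by symmetric matrices $s\in I(\bar n)$ and lift this parametrisation back to the double cosets.

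To each $g\in G$ I would attach a symmetric matrix $s(g)=(n_{i,j}(g))\in I(\bar n)$ encoding the relative position of the flag $\mathcal{F}_g:=g\mathcal{F}_{\mathrm{std}}$ and its $\sigma$-translate $\sigma(\mathcal{F}_g)$; equivalently, the relative position of the $D'$-form $g\cdot(D')^n$ with respect to the standard flag, graded by the partition $\bar n$. Symmetry of $s(g)$ comes from $\sigma^2=\mathrm{id}$, and the row-sum condition $\sum_j n_{i,j}=n_i$ from a dimension count on each graded piece $V_i^{\mathrm{std}}/V_{i-1}^{\mathrm{std}}$. This invariant is manifestly left $P$- and right $H$-invariant.

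A direct block computation with the prescribed form of $u_s$ shows that $x_{u_s}=u_s\sigma(u_s)^{-1}$ is the permutation matrix acting as the identity on each $\mathrm{Vect}(\mathcal{B}_{i,i})$ and swapping $\mathrm{Vect}(\mathcal{B}_{i,j})$ with $\mathrm{Vect}(\mathcal{B}_{j,i})$ for $i<j$, from which one reads off $s(u_s)=s$. Hence the cosets $Pu_sH$ are pairwise disjoint for distinct $s$, and the problem reduces to exhaustivity.

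The main obstacle is to prove that every $g\in G$ lies in some $Pu_{s(g)}H$. This is carried out by a Bruhat-style reduction. Given $g$, the decomposition $D=D'\oplus\iota D'$ allows one, for each pair $i<j$, to pair off the basis vectors whose $g$-image contributes to the off-diagonal blocks into $D'$-dual pairs $(v,\iota v)$, thereby realising the normal block $\bigl(\begin{smallmatrix}I&-\iota I\\ I&\iota I\end{smallmatrix}\bigr)$, while the contributions to the diagonal blocks come from vectors inside $(D')^n$ and produce $I_{n_{i,i}}$. The delicate point is to carry out this procedure coherently across all blocks, ensuring every row operation stays in $P$ and every column operation in $H$, and using the relation $x_g\in X$ to enforce the symmetry $s(g)^{T}=s(g)$; a final dimension count then matches the resulting matrix to exactly $u_{s(g)}$.
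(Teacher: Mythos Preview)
Your approach is essentially the one the paper adopts: the paper's proof consists only of a pointer to \cite{M.16} and \cite{M.11}, and the argument you sketch---sending $g$ to $g\sigma(g)^{-1}$, attaching the symmetric invariant $s(g)\in I(\bar n)$ via relative position of the flag and its $\sigma$-translate, checking $s(u_s)=s$, and then normalising block by block using $D=D'\oplus\iota D'$---is exactly the Matringe strategy those references carry out. Your outline is in fact more explicit than the paper's own proof, and matches the paper's subsequent use of $w_s=u_s\sigma(u_s^{-1})$ as the permutation swapping $I_{i,j}$ with $I_{j,i}$.
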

\begin{proof}
 The proof is similar to the odd case of \cite{M.16} and with more precisions in \cite{M.11}.
\end{proof}

\begin{rem}
 If we denote $\mathbb{H}$ the division algebra of quaternions over $F$ (\textit{i.e.} of dimension $4$ over $F$), then $C_\mathbb{H}(E)=E$. Moreover, if we denote $\iota'$ an element of $\mathbb{H}\setminus E$ such that $\mathbb{H}=E\oplus \iota'E$ 
 and $\sigma(\iota')=-\iota'$ and $u_s'$ the same matrix as $u_s$ where $\iota$ have been replaced by $\iota'$, then the map $u_s\mapsto u_s'$ for $s\in I(\bar{n})$ induces a bijection between $P(D)\backslash GL(n,D)/GL(n,D')$ and 
 $P(\mathbb{H})\backslash GL(n,\mathbb{H})/GL(n,E)$.
\end{rem}

\paragraph{}Now, for $s$ in $I(\bar{n})$, we set $w_s=u_s\sigma(u_s^{-1})$. We can write $I=\llbracket 1,n\rrbracket$ as the ordered disjoint union
$$I=I_{1,1}\cup I_{1,2}\cup \dots \cup I_{1,r} \cup I_{2,1}\cup \dots \cup I_{r,1}\cup \dots \cup I_{r,r-1}\cup I_{r,r},$$ 
with $I_{i,j}$ of length $n_{i,j}$. We can check that $w_s$ is the matrix of the permutation of $n$ 
sending $I_{i,i}$ to itself identically and $I_{i,j}$ to $I_{j,i}$ when $i\neq j$ such that $w_s(n_{1,1}+\dots+n_{i,j-1}+k)=n_{1,1}+\dots+n_{j,i-1}+k$ for $k\in\{1,\dots,n_{i,j}\}$.\\
We also set 
$$\begin{matrix}&&&&\\\sigma_s&:&G=GL(n,D)&\rightarrow&GL(n,D)\\&&x&\mapsto&w_s\sigma(x)w_s^{-1}\end{matrix}.$$ 
Then, $u_sHu_s^{-1}$ is the group of fixed points of $G$ under the involution $\sigma_s$ : $u_sHu_s^{-1}=:G^{\sigma_s}$.

\paragraph{}Now, for $s\in I(\bar{n})$, we can consider the standard parabolic subgroup of $G$ attached to $s$ and its standard decomposition denoted by : $P_s=M_sN_s$. We notice that as $s$ can be seen as a sub-partition of $(n_1,\dots,n_r)$ 
(corresponding to the standard parabolic subgroup $P$), $P_s$ is included  in $P$. We need now to study $P_s\cap u_sHu_s^{-1}$ and especially its decomposition.
\paragraph{}The same proof as in Lemma 3.2 and Proposition 3.2 of \cite{M.16} shows the following proposition~: 

\begin{prop}
 For $s\in I(\bar{n})$, we have $w_s(\Phi_M^-)\subset\Phi^-$, $w_s(\Phi_M^+)\subset\Phi^+$. Moreover, $P\cap u_sHu_s^{-1}=P_s\cap u_sHu_s^{-1}$ and $P_s\cap u_sHu_s^{-1}$ is the semidirect product of $M_s\cap u_sHu_s^{-1}$ and $N_s\cap u_sHu_s^{-1}$.
\end{prop}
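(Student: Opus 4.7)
The plan is to prove the three sub-claims in order: the combinatorial assertion $w_s(\Phi_M^\pm)\subset\Phi^\pm$; the intersection equality $P\cap u_sHu_s^{-1}=P_s\cap u_sHu_s^{-1}$; and finally the semidirect decomposition. The first will be a direct combinatorial check on the permutation $w_s$, while the other two will follow by analyzing the condition $\sigma_s(p)=p$ entrywise, which translates to $p_{w_s(a),w_s(b)}=\sigma(p_{a,b})$ for every pair $(a,b)$.

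For the root-preservation statement, I would unpack the explicit description of $w_s$: it fixes each $I_{i,i}$ pointwise and sends $I_{i,j}$ to $I_{j,i}$ in an order-preserving fashion. If $\alpha=(a,b)\in\Phi_M^+$, then $a<b$ lie in the same $M$-block (say the $i$-th), so $a\in I_{i,j}$ and $b\in I_{i,j'}$ with $j\le j'$. Since the global index ordering is lexicographic in the pair label, one checks in both cases $j=j'$ (same sub-block, order preserved) and $j<j'$ (different sub-blocks $I_{j,i}$ and $I_{j',i}$, still correctly ordered) that $w_s(a)<w_s(b)$. The $\Phi_M^-$ claim follows since $w_s$ is an involution and $\Phi^-=-\Phi^+$.

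For the intersection equality, the inclusion coming from $P_s\subset P$ is immediate. For the reverse, take $p\in P$ with $\sigma_s(p)=p$. The extra entries that $P$ allows over $P_s$ sit at positions $(a,b)$ with $a,b$ in the same $M$-block but in different $M_s$-sub-blocks, with $a>b$. Writing $a\in I_{i,j}$, $b\in I_{i,j'}$ with $j>j'$, one has $w_s(a)\in I_{j,i}$, living in the $j$-th $M$-block, and $w_s(b)\in I_{j',i}$, living in the strictly smaller $j'$-th $M$-block; hence $(w_s(a),w_s(b))$ sits strictly below the $M$-block diagonal, so $p_{w_s(a),w_s(b)}=0$ because $p\in P$. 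The identity $\sigma(p_{a,b})=p_{w_s(a),w_s(b)}=0$ forces $p_{a,b}=0$, and $p$ therefore lies in $P_s$.

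For the Levi decomposition, I first verify that $\sigma_s(M_s)=M_s$: indeed $\sigma(M_s)=M_s$ by the entrywise action, and $w_s$ permutes the blocks $I_{i,j}$ via $(i,j)\mapsto(j,i)$, which by the symmetry $n_{i,j}=n_{j,i}$ respects block sizes. Given $p=mn\in P_s\cap u_sHu_s^{-1}$ with $m\in M_s$, $n\in N_s$, expanding $\sigma_s(p)=\sigma_s(m)\sigma_s(n)$ and comparing with $mn$ yields $\sigma_s(m)^{-1}m=\sigma_s(n)n^{-1}\in M_s\cap(\sigma_s(N_s)\cdot N_s)$. The key observation is the root-level disjointness $\Phi_{M_s}\cap w_s(\Phi_{N_s})=\emptyset$, which follows directly from $w_s$ sending $I_{i,j}$ to $I_{j,i}$ and $\Phi_{N_s}$ consisting of roots across distinct sub-blocks. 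Converting this Lie-algebra disjointness into the group-level triviality of the above intersection is the main obstacle; it is a standard argument using root-group filtrations and is precisely the content of the analogous Proposition 3.2 of \cite{M.16}, which adapts verbatim. Once that intersection is trivial, one obtains $\sigma_s(m)=m$ and $\sigma_s(n)=n$, and the semidirect structure follows since $M_s$ normalizes $N_s$.
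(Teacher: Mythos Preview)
Your proposal is correct and follows essentially the same approach as the paper: the paper simply says that the proof is identical to that of Lemma 3.2 and Proposition 3.2 of \cite{M.16}, and your argument unpacks precisely those steps (the combinatorics of $w_s$ on the intervals $I_{i,j}$, the entrywise constraint $p_{w_s(a),w_s(b)}=\sigma(p_{a,b})$, and the reference to \cite{M.16} for the semidirect decomposition). One minor remark on your third step: since $\sigma_s$ permutes the diagonal $M_s$-blocks via $(i,j)\mapsto(j,i)$, reading off the diagonal blocks of the identity $\sigma_s(p)=p$ already gives $\sigma_s(m)=m$ directly, so the root-filtration detour is not actually needed---but your reduction to \cite{M.16} is of course also valid.
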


We can make the subgroup $M_s^{\sigma_s}=M_s\cap u_sHu_s^{-1}$ explicit : 
$$M_s^{\sigma_s}=\{diag(a_{1,1},a_{1,2},\dots,a_{1,r},a_{2,1},\dots,a_{r,r})\;;\; a_{j,i}=\sigma(a_{i,j})\in GL(n_{i,j},D)\}.$$

\paragraph{}Finally, we have the following equality of characters :
\begin{prop}\label{car2}
 $(\delta_{P_s^{\sigma_s}})_{|M_s^{\sigma_s}}=(\delta_{P_s})_{|M_s^{\sigma_s}}$
\end{prop}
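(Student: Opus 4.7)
The plan is to prove the equality by a direct computation on an arbitrary element $m = diag(a_{1,1}, a_{1,2}, \dots, a_{r,r}) \in M_s^{\sigma_s}$, exploiting the constraints $a_{j,i} = \sigma(a_{i,j})$. The essential observation is that $\sigma = int(\delta)$ is inner, so $N_{rd,F} \circ \sigma = N_{rd,F}$ on $GL(n_{i,j},D)$; hence $|N_{rd,F}(a_{i,j})|_F = |N_{rd,F}(a_{j,i})|_F$ for all pairs $(i,j)$. This symmetry is the combinatorial engine behind the equality.

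First I would expand $\delta_{P_s}(m) = |{\det}_F\,\mathrm{Ad}(m)_{|Lie(N_s)}|_F$ as a product, over the upper-triangular block positions $((i_1,j_1),(i_2,j_2))$ in the lex-ordered sub-partition attached to $s$, of factors $|N_{rd,F}(a_{i_1,j_1})|_F^{d\,n_{i_2,j_2}} \cdot |N_{rd,F}(a_{i_2,j_2})|_F^{-d\,n_{i_1,j_1}}$, coming from the action $X \mapsto a_{i_1,j_1}\,X\,a_{i_2,j_2}^{-1}$ on the block $\mathcal{M}_{n_{i_1,j_1},n_{i_2,j_2}}(D)$. Second, I would describe $Lie(N_s^{\sigma_s})$ as the direct sum, indexed by the $\sigma_s$-orbits on block positions of $Lie(N_s)$, of the $+1$-eigenspaces: the $\sigma_s$-fixed positions (those linking two diagonal indices $(i,i)$ and $(j,j)$) contribute a $D$-block whose intersection with $Lie(N_s^{\sigma_s})$ is a $D'$-subblock of the same shape, while the size-two orbits contribute a $+1$-eigenspace of a pair of $D$-blocks that is identified with one of them via $X \mapsto X + \sigma_s(X)$. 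Computing the $F$-determinant of $\mathrm{Ad}(m)$ on each of these pieces yields an analogous product formula for $\delta_{P_s^{\sigma_s}}(m)$.

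Finally I would match the two products term by term using $|N_{rd,F}(a_{i,j})|_F = |N_{rd,F}(a_{j,i})|_F$, which collapses the pairs of contributions in the $\delta_{P_s}$ formula coming from positions related by $\sigma_s$ into the single contribution appearing in the $\delta_{P_s^{\sigma_s}}$ formula from the corresponding $+1$-eigenspace, and uses the fact that $D'$ has half the $F$-dimension of $D$ to account for the fixed positions. The main obstacle is the combinatorial bookkeeping: carefully tracking the lex order on the sub-partition indices, the action of $w_s$ (which swaps $(i,j) \leftrightarrow (j,i)$ when $i \ne j$) on block positions, the interplay with the entry-wise $\sigma$, and the matching of $F$-dimensions across $D$- and $D'$-valued subspaces. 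The argument follows the same template as Lemma 3.2 and Proposition 3.2 of \cite{M.16}, with $D' = C_D(E)$ in the present even-$d$ setting playing the role of the centralizer that appears in the odd-$d$ case.
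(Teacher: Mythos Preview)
Your approach is genuinely different from the paper's. The paper does \emph{not} compute on a general $m\in M_s^{\sigma_s}$; instead it invokes Lemma~1.10 of \cite{K-T.08}: since $\sigma_s$ is defined over $E$, the two characters agree on $M_s^{\sigma_s}$ as soon as they agree on the $E$-split component $Z_s^{\sigma_s}$, consisting of block-scalar matrices $t=\mathrm{diag}(\lambda_{1,1}I,\dots,\lambda_{r,r}I)$ with $\lambda_{i,j}=\lambda_{j,i}\in E^*$. On that torus every $\alpha(t)$ lies in $E^*\subset D'$, so the comparison collapses to the root-by-root identity $|N_{rd,F}(\alpha(t))|_F=|N_{rd,E}(\alpha(t))|_E$, established via $N_{rd,F}|_{D'}=N_{E/F}\circ N_{rd,E}$ and $|x|_F=|x|_E^{1/2}$ for $x\in F^*$. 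Your direct block computation avoids the Kato--Takano reduction and is in that sense more elementary, but it forces you to handle left and right multiplication by elements of $E$ that are central in $D'$ yet not in $D$, and to keep track of $F$-dimensions of $D$- versus $D'$-valued blocks --- bookkeeping the paper sidesteps entirely.

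There is also a genuine gap in your outline. You describe $Lie(N_s^{\sigma_s})$ as indexed by the $\sigma_s$-orbits on the block positions of $Lie(N_s)$, implicitly assuming that $\sigma_s$ preserves $Lie(N_s)$. It does not: there are roots $\alpha\in\Phi^+\setminus\Phi_s^+$ with $w_s(\alpha)\in\Phi^-\setminus\Phi_s^-$. For such $\alpha$ the $\sigma_s$-fixed part of $Lie(N_\alpha)\oplus Lie(N_{w_s(\alpha)})$ meets $Lie(N_s)$ trivially, so $\alpha$ contributes to $\delta_{P_s}$ but not to $\delta_{P_s^{\sigma_s}}$. The paper disposes of these ``orphan'' roots via relation~\eqref{relation2}, showing that their total contribution on $Z_s^{\sigma_s}$ is $1$. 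For your argument to close you need the analogous cancellation on all of $M_s^{\sigma_s}$: pair the orphan block position $((i_1,j_1),(i_2,j_2))$ with its companion $((j_2,i_2),(j_1,i_1))$ (itself an orphan in the upper triangle, since $w_s$ is an involution), and then your symmetry $|N_{rd,F}(a_{i,j})|_F=|N_{rd,F}(a_{j,i})|_F$ together with $n_{i,j}=n_{j,i}$ makes the product of their contributions equal to $1$. Without this step the term-by-term matching you describe is incomplete.
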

\begin{proof}
As said in Proposition 4.4 of \cite{M.11}, thanks to Lemma 1.10 of \cite{K-T.08}, as $\sigma_s$ is defined over $E$, it is enough to check the equality on the $E$-split component $Z_s^{\sigma_s}$ (the maximal $E$-split 
 torus in the center of $M_s$) of $M_s^{\sigma_s}$. Here, 
 $$Z_s^{\sigma_s}=\left\{\begin{pmatrix}\lambda_{1,1}I_{n_{1,1}}&&&\\&\lambda_{1,2}I_{n_{1,2}}&&(0)\\(0)&&\ddots&\\&&&\lambda_{r,r}I_{n_{r,r}}\end{pmatrix};\;\lambda_{i,j}=\lambda_{j,i}\in E^*\right\}$$
 For $t\in Z_s^{\sigma_s}$, $\delta_{P_s}(t)=|N_{rd,F}(Ad(t)_{|Lie(N_s)})|_F=\prod_{\alpha\in\Phi^+-\Phi_s^+}|N_{rd,F}(\alpha(t))|_F$.
 \paragraph{}Now, for $\alpha\in\Phi$, let $\mathcal{N}_{\alpha,w_s(\alpha)}=\{x\in Lie(N_\alpha)+Lie(N_{w_s(\alpha)});\;\sigma_s(x)=x\}$; it is a right $D'$-vector space of dimension $|\alpha,w_s(\alpha)|$. Then, for $t\in Z_s^{\sigma_s}$, we have :
 $$\delta_{P_s^{\sigma_s}}(t)=\prod_{\{\alpha,w_s(\alpha)\}\subset\Phi^+-\Phi_s^+}|N_{rd,E}(Ad(t)_{|\mathcal{N}_{\alpha,w_s(\alpha)}})|_E=\prod_{\{\alpha\in\Phi^+-\Phi_s^+;\;w_s(\alpha)\in\Phi^+-\Phi_s^+\}}|N_{rd,E}(\alpha(t))|_E.$$
 
 Moreover, we have (see Proposition 4.4 of \cite{M.11}) : 
\begin{equation}\label{relation2}
\prod_{\{\alpha\in \Phi^+-\Phi_s^+; w_s(\alpha)\notin \Phi^+-\Phi_s^+\}}|N_{rd,E}(\alpha(t))|_E= \prod_{\{\alpha\in \Phi^+-\Phi_s^+; w_s(\alpha)\in \Phi^--\Phi_s^-\}}|N_{rd,E}(\alpha(t))|_E= 1
\end{equation}
 
 \paragraph{}Thus, $\delta_{P_s^{\sigma_s}}(t)=\underset{\{\alpha\in\Phi^+-\Phi_s^+\}}\prod|N_{rd,E}(\alpha(t))|_E$.
 \paragraph{}Finally, as $\alpha(t)\in D'$, we have 
 $$|N_{rd,F}(\alpha(t))|_F=|N_{E/F}\circ N_{rd,E}(\alpha(t))|_F=|N_{E/F}\circ N_{rd,E}(\alpha(t))|_E^{1/2}=|N_{rd,E}(\alpha(t))|_E$$ 
 which gives the equality of characters.
\end{proof}

\subsection{Distinguished Steinberg representations}
For $\mu$ a character of $E^*$, we set $\tilde{\mu}=\mu\circ N_{rd,E}$. In this section, we will study whether $St(1)$ is $\tilde{\mu}$-distinguished under $H$ or not, according to the character $\mu$ of $E^*$. We denote by $St(1)$ the Steinberg representation 
$ind_{P_\emptyset}^G(1)/\sum_P ind_P^G(1)$ 
where $P$ describes the standard parabolic subgroups of $G$ corresponding to a partition of $n$ of type $(1,\dots,1,2,1,\dots,1)$.

\paragraph{}First, we suppose that $St(1)$ is $\tilde{\mu}$-distinguished under $H$ and we find a necessary condition on $\mu$ in the following proposition :
\begin{prop}\label{necessaire2}
 Suppose that $St(1)$ is $\tilde{\mu}$-distinguished under $H$. Then, $\mu_{|F^*}=1$ if $n$ is even and $\mu=1$ if $n$ is odd. Moreover, only the open orbit $P_\emptyset u_sH$ where $s=\begin{pmatrix}&&1\\&\iddots&\\1&&\end{pmatrix}$ 
 supports a $\tilde{\mu}$-equivariant linear form and dim$(Hom_H(St(1),\tilde{\mu}))=1$.
\end{prop}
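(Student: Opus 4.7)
The plan is to exploit the description of $St(1)$ as a quotient of $ind_{P_\emptyset}^G(1)$: the defining surjection induces an embedding
\[
Hom_H(St(1),\tilde\mu)\hookrightarrow Hom_H(ind_{P_\emptyset}^G(1),\tilde\mu),
\]
so it suffices to establish the three assertions of the proposition --- at most one-dimensional, supported by a unique orbit which is the open one, and nonzero only when the stated condition on $\mu$ holds --- for $ind_{P_\emptyset}^G(1)$ in place of $St(1)$.

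I would then apply Proposition \ref{Mackey} to $ind_{P_\emptyset}^G(1)$, using the set of representatives $u_s$, $s\in I((1,\dots,1))$, provided by Proposition 2.1. The set $I((1,\dots,1))$ consists of the $0$-$1$ symmetric matrices with row sums $1$, i.e.\ the involutions of $S_n$; Proposition \ref{ouvert} applied to the anti-diagonal involution $s_0$ shows that its orbit is open, and after ordering the other orbits before it we obtain a Mackey filtration whose $s$-th subquotient is the $H$-module $\mathcal{C}_c^\infty(P_\emptyset\backslash P_\emptyset u_sH,1)\simeq ind_{u_s^{-1}P_\emptyset u_s\cap H}^H(1)$. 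Frobenius reciprocity (Proposition \ref{Frob}) then identifies its $\tilde\mu$-space with $Hom_{u_s^{-1}P_\emptyset u_s\cap H}(\Delta_s,\tilde\mu)$ for a suitable quotient $\Delta_s$ of modular characters.

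Next I would conjugate by $u_s$ into $P_\emptyset\cap u_sHu_s^{-1}$, which by Proposition 2.2 equals $M_\emptyset^{\sigma_s}\ltimes N_\emptyset^{\sigma_s}$ (since $\bar n=(1,\dots,1)$ forces $P_s=P_\emptyset$). Both $\tilde\mu$ (reduced norms kill unipotents) and $\Delta_s$ (as a character of a solvable group whose commutator subgroup contains the unipotent radical) are trivial on $N_\emptyset^{\sigma_s}$, so the condition collapses to a character identity on $M_\emptyset^{\sigma_s}$; Proposition \ref{car2} makes it totally explicit, equating the relevant modular character ratio with $\delta_{P_\emptyset}|_{M_\emptyset^{\sigma_s}}$. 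The explicit description of $M_\emptyset^{\sigma_s}$ as a product of $D^\times$-factors (one per two-cycle of $s$) and $(D')^\times$-factors (one per fixed point of $s$) turns this identity into a concrete system in the unknown $\mu$.

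Solving the system finishes the argument. For $s\ne s_0$ one argues that no character of $E^\times$ pulled back via $N_{rd,E}$ can reproduce the exponents dictated by the roots of $\Phi^+\setminus\Phi_s^+$, so these orbits support no $\tilde\mu$-equivariant form; for $s=s_0$ the system is solvable with at most one solution, yielding dimension one. The surviving conditions read off the factorisation of $M_\emptyset^{\sigma_{s_0}}$: when $n$ is even there are only $D^\times$-factors, which force $\mu_{|F^*}=1$ through surjectivity of the reduced norm $D^\times\to F^\times$; when $n$ is odd the extra central $(D')^\times$-factor demands $\mu\circ N_{rd,E}=1$ on $(D')^\times$, which by surjectivity of $N_{rd,E}$ upgrades to $\mu=1$. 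The main obstacle is the elimination of the non-open orbits, as it requires careful bookkeeping of the exponents contributed to $\delta_{P_\emptyset}|_{M_\emptyset^{\sigma_s}}$ by each positive root and a combinatorial argument showing these exponents cannot be absorbed by any character of the form $\mu\circ N_{rd,E}$.
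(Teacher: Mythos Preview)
Your plan is exactly the paper's: pass to $ind_{P_\emptyset}^G(1)$, run the Mackey filtration over the $u_s$, apply Frobenius reciprocity together with Proposition~\ref{car2} to reduce each subquotient to a character identity on $M_\emptyset^{\sigma_s}$, eliminate all $s\neq s_0$, and read off the condition on $\mu$ from the open orbit. Two places where the paper supplies something your sketch leaves open:

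\emph{Eliminating the non-open orbits.} You correctly flag this as ``the main obstacle'' but propose only ``careful bookkeeping of exponents''. The paper's shortcut is to first observe that $\tilde\mu_{|F^*}$ is \emph{unitary}: since $Z(G)=F^*I_n\subset H$ and the central character of $St(1)$ is trivial, distinction forces $\tilde\mu_{|Z(G)}=1$, so $\mu_{|F^*}$ has finite order. With this in hand, if $w_s$ fixes some index $i$ one plugs $M=\mathrm{diag}(1,\dots,a,\dots,1)$ with $a\in F^*$ at position $i$ into the identity $\delta_{P_\emptyset}^{-1}=\tilde\mu_s$ on $M_\emptyset^{\sigma_s}$; the left side is a nontrivial positive power of $|a|_F$ unless $i=\tfrac{n+1}{2}$, while the right side is unitary. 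This forces at most one fixed point, at the midpoint, and the analogous two-cycle argument (detailed in \cite{M.16}) then pins down $s=s_0$.

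\emph{Reading off $\mu_{|F^*}=1$ for $n$ even.} Your appeal to ``surjectivity of the reduced norm $D^\times\to F^\times$'' skips a step: $\tilde\mu_{s_0}$ is $\mu\circ N_{rd,E}$ after conjugating by $u_{s_0}^{-1}$ into $H=GL(n,D')$, so one must compute $N_{rd,E}$ on that conjugate, not $N_{rd,F}$ on the source. The paper handles this by identifying $u_{s_0}^{-1}M_\emptyset^{\sigma_{s_0}}u_{s_0}$ with block-diagonal copies of the image of the natural embedding $f:D\hookrightarrow D\otimes_FE\simeq \mathcal{M}_2(D')$, and checking that $N_{rd,E}\circ f=N_{rd,F}$ (a common splitting field does it). Hence $N_{rd,E}(u_{s_0}^{-1}M_\emptyset^{\sigma_{s_0}}u_{s_0})=F^*$, and the condition $\tilde\mu_{s_0}=1$ becomes exactly $\mu_{|F^*}=1$.
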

\begin{proof}
The idea of the proof is the same as those of Propositions 3.4 and 3.5 of \cite{M.16} so we do not give all details. 
Supppose that $St(1)$ is $\tilde{\mu}$-distinguished, then $ind_{P_\emptyset}^G(1)$ is also $\tilde{\mu}$-distinguished so
\begin{eqnarray*}
\exists s\in I(\bar{n})\text{ such that }\mathrm{Hom}_{P_\emptyset^{\sigma_s}}\left(\frac{\Delta_{P_\emptyset^{\sigma_s}}}{\Delta_{G^{\sigma_s}}}\tilde{\mu}_s^{-1},1\right)\neq\{0\}
\end{eqnarray*}
where $\tilde{\mu}_s(x)=\tilde{\mu}(u_s^{-1}xu_s)$ for $x$ in $u_s H u_s^{-1}$.
\paragraph{}\begin{sloppypar}Now, suppose that $w_s$ has at least one fixed point, then if we consider $M=diag(1,\dots,1,a,1,\dots,1)$ with $a\in F^*$ in the $i$-th row, the previous equality of characters and the fact that $\tilde{\mu}_{|F^*}$ is unitary (considering the central character of 
$ind_{P_\emptyset}^G(1)$ and its $\tilde{\mu}$-distinction) imply that 
 $i=\frac{n+1}{2}$ so $n$ is odd and $w_s$ has only one fixed point.
 \end{sloppypar}
 
 \paragraph{}Thus, if $n$ is even, $w_s$ has no fixed point. Then we get  $s=\begin{pmatrix}&&1\\&\iddots&\\1&&\end{pmatrix}$ and $\tilde{\mu}(u_s^{-1}M_{\emptyset}^{\sigma_s}u_s)=1$. 
 Let us prove that $N_{rd,E}(u_s^{-1}M_\emptyset^{\sigma_s}u_s)=F^*$. First, we show how $D$ can be embedded in $\mathcal{M}_2(D')$. $\mathcal{M}_2(D')$ identifies to $\mathrm{End}(D)_{D'}$ via the basis $(1,\iota)$ of the right 
 $D'$-vector space $D$. Moreover, we have 
 $$\begin{matrix}D\otimes_FE&\tilde{\longrightarrow}&\mathrm{End}(D)_{D'}\\a\otimes e&\mapsto&(d\mapsto ade)\end{matrix}$$
 so $D$ can be embedded in $\mathcal{M}_2(D')$ via 
 $$\begin{matrix}f\::\:&D&\longrightarrow&\mathcal{M}_2(D')\\&a&\mapsto&a\otimes 1\in D\otimes_FE\simeq\mathcal{M}_2(D')\end{matrix}$$
 Thus, $N_{rd,E}(f(D^*))=N_{rd,F}(D^*)=F^*$ (a splitting field $L$ for $D$ is also a splitting field for $D\otimes_FE$ and $(D\otimes_FE)\otimes_EL=D\otimes_FL$). We finish by noticing that $u_s^{-1}M_\emptyset^{\sigma_s}u_s=f(D)$. 
 Thus, $N_{rd,E}(u_s^{-1}M_\emptyset^{\sigma_s}u_s)=F^*$ so we obtain $\mu_{|F^*}=1$.\\
 If $n$ is odd, we get $s=\begin{pmatrix}&&1\\&\iddots&\\1&&\end{pmatrix}$, $\mu_{|F^*}=1$ and, as $w_s$ has one fixed point, we have the extra condition $\tilde{\mu}(D'^*)=1$. As $N_{rd,E}(D'^*)=E^*$, we obtain $\mu=1$.
 
 \paragraph{}We notice that, as $\sigma(u_s^{-1}P_\emptyset u_s)=u_s^{-1}P_\emptyset^- u_s$, then $u_s^{-1}P_\emptyset u_sH$ is open in $G$ (thanks to Proposition \ref{ouvert}) so $P_\emptyset u_sH$ is open in $G$ too.
\end{proof}

Now, we will exhibit a non-zero $\tilde{\mu}$-equivariant linear form on $ind_{P_\emptyset}^G(1)$. To do that, we will follow the strategy of Sections 4.2 and 4.3 of \cite{M.16}.

\paragraph{}For $z\in\mathbb{C}$, we denote by $\delta_z$ the character $(\delta_{P_\emptyset})^z$. For $f\in ind_{P_\emptyset}^G(1)$, we denote by $f_z$ the only element in $ind_{P_\emptyset}^G(\delta_z)$ such that 
$f_{z|K}=f_{|K}$ with $K=GL(n,\mathcal{O}_D)$. The map 
$$\phi : \begin{matrix}&&\\ind_{P_\emptyset}^G(1)&\rightarrow&ind_{P_\emptyset}^G(\delta_z)\\f&\mapsto&f_z\end{matrix}$$ 
is a $K$-isomorphism. We also set $s_0=\begin{pmatrix}&&1\\&\iddots&\\1&&\end{pmatrix}\in I(\bar{n})$, 
$u_0=u_{s_0}$ and $\sigma_0=\sigma_{s_0}$.

\begin{prop}
 Suppose that $\mu_{|F^*}=1$ if $n$ is even and $\mu=1$ if $n$ is odd. For $f$ in $ind_{P_\emptyset}^G(1)$, the integral $I_{n,z}(f_z)=\int_{u_0^{-1}P_\emptyset u_0\cap H\backslash H}\tilde{\mu}^{-1}(h)f_z(u_0h)dh$ converges for $Re(z)$ large enough and 
 there exists $Q\in\mathbb{C}[X]$ such that $Q(q^{-z})I_{n,z}(f_z)$ 
 belongs to $\mathbb{C}[q^{\pm z}]$ for all $f$ in $ind_{P_\emptyset}^G(1)$ (with $q=|k_F|$).\\
 Moreover, for $z'$ in $\mathbb{C}$, there exists $l_{z'}$ in $\mathbb{N}$ such that 
 $$L_{z'}=\underset{z\rightarrow z'}{lim}(1-q^{z'-z})^{l_{z'}}I_{n,z}$$
 belongs to $\mathrm{Hom}_H(ind_{P_\emptyset}^G(\delta_{z'}),\tilde{\mu})\backslash\{0\}$.
\end{prop}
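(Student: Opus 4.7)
The plan is to follow the strategy of Sections 4.2--4.3 of \cite{M.16}: first establish absolute convergence of the Jacquet-type integral $I_{n,z}(f_z)$ in a right half-plane, then obtain rationality in $q^{-z}$ by decomposing the integration domain, and finally extract a non-zero $H$-equivariant limit by cancelling poles with factors $(1-q^{z'-z})^{l_{z'}}$. The construction is built on the fact, already noted in the proof of Proposition \ref{necessaire2}, that $P_\emptyset u_0 H$ is open in $G$, so the integral is naturally supported on (an open subset of) the Iwasawa-type piece arising from this open orbit.

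Concretely, I would first parameterize the quotient $u_0^{-1} P_\emptyset u_0 \cap H \backslash H$ as follows. Using $G = P_\emptyset K$ together with openness of the orbit and the fact that $H \cap u_0^{-1}Ku_0$ is compact open, one reduces the integral to an integration over an opposite-unipotent-type piece $\bar N \subset H$ (essentially the $\sigma_0$-fixed points of a conjugate of $N_\emptyset^-$), modulo a compact open subgroup on which $f_z$ is constant. For $\bar n \in \bar N$ one writes the Iwasawa decomposition $u_0 \bar n = p(\bar n) \kappa(\bar n)$ with $p(\bar n) \in P_\emptyset$, $\kappa(\bar n) \in K$, so that the integrand becomes $\tilde\mu^{-1}(\bar n) \delta_z(p(\bar n)) f(\kappa(\bar n))$. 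The diagonal part of $p(\bar n)$ is a product of characters associated to the positive roots acting on the coordinates of $\bar n$, and absolute convergence for $\mathrm{Re}(z)$ large enough follows from comparison with a geometric sum $\sum_m q^{-a \mathrm{Re}(z) m}$ with $a>0$, standard in such contexts.

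For rationality and the universal denominator, I would partition $\bar N$ into $\mathcal{O}_F$-lattice annuli on which $f_{z|K}$ is constant by smoothness and on which $\delta_z(p(\bar n))$ is a single monomial in $q^{-z}$. The integral thus becomes a finite sum of geometric series in $q^{-z}$, each of which is a rational function with denominator of the form $1 - q^{-a z}$, $a$ a positive integer depending only on the root system and the $\sigma_0$-orbit structure, not on $f$. Taking $Q$ to be the product of these elementary denominators yields $Q(q^{-z}) I_{n,z}(f_z) \in \mathbb{C}[q^{\pm z}]$ for every $f$, and hence a meromorphic continuation of $I_{n,z}$ to all of $\mathbb{C}$ with poles controlled by $Q$. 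The $H$-equivariance in the convergent range is a direct change of variables $h \mapsto h h_0$ using the left-invariance of Haar measure, and it propagates to the continuation by uniqueness.

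At any $z' \in \mathbb{C}$, letting $l_{z'}$ be the order of the pole of $I_{n,z}$ at $z=z'$ (which is finite and bounded by the order of the zero of $Q(q^{-z})$ at $z'$), the limit $L_{z'} = \lim_{z \to z'} (1 - q^{z'-z})^{l_{z'}} I_{n,z}$ exists, is $H$-equivariant against $\tilde\mu$, and defines an element of $\mathrm{Hom}_H(ind_{P_\emptyset}^G(\delta_{z'}), \tilde\mu)$. The main obstacle is proving that this form is non-zero, since a priori the cancellation by $(1-q^{z'-z})^{l_{z'}}$ could kill everything. The key is to exhibit a single test function $f$ for which $I_{n,z}(f_z)$ has pole of order exactly $l_{z'}$ at $z'$: as in \cite{M.16}, one chooses $f$ supported on a small neighbourhood $P_\emptyset \cdot u_0 \cdot \Omega$ of the base point of the open orbit, so that the integral localizes and reduces to a one-dimensional toric integral whose residue is explicitly computable and visibly non-zero.
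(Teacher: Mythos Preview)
Your approach differs substantially from the paper's. The paper does not carry out the Iwasawa/annulus decomposition you outline; instead it invokes Bernstein's principle for meromorphic continuation of equivariant linear forms (Corollary~2.12 of \cite{M.15}), which delivers rationality and non-vanishing of the regularized limit in one package. The inputs the paper verifies are: (i) the integrand descends to the quotient, using the hypothesis on $\mu$ to check that both $\delta_z$ and $\tilde\mu$ are trivial on $u_0^{-1}P_\emptyset u_0\cap H$, so that $I_{n,z}$ is already well-defined and nonzero on $\mathcal{C}_c^\infty(P_\emptyset\backslash P_\emptyset u_0 H,\delta_z)$ for every $z$; (ii) absolute convergence for $\mathrm{Re}(z)\gg 0$, by extending $|\tilde\mu^{-1}|$ to a character of $G$ and quoting Theorems~2.8 and~2.16 of \cite{B-D.08}; and crucially (iii) the multiplicity-one bound $\dim\mathrm{Hom}_H(ind_{P_\emptyset}^G(\delta_z),\tilde\mu)\leq 1$ for all $z$, obtained by the Mackey/Frobenius argument of Proposition~\ref{necessaire2}. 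Your direct route avoids (iii) entirely and is more self-contained, at the cost of reproving by hand what Bernstein's machinery gives uniformly.

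There is, however, a genuine gap in your non-vanishing step. You propose to detect $L_{z'}\neq 0$ with a test function $f$ supported in the open orbit $P_\emptyset u_0 H$. But for any such $f$ the map $h\mapsto f_z(u_0 h)$ is compactly supported modulo $u_0^{-1}P_\emptyset u_0\cap H$, so $I_{n,z}(f_z)$ is a Laurent polynomial in $q^{\pm z}$ and has \emph{no} poles; if $l_{z'}>0$ the factor $(1-q^{z'-z})^{l_{z'}}$ kills it and your test function gives $L_{z'}(f)=0$. The open-orbit construction therefore only treats the case $l_{z'}=0$. This is exactly why the paper reaches for Bernstein's principle: there the non-vanishing of the leading Laurent coefficient is forced by the multiplicity-one hypothesis (a flatness argument on the one-dimensional family of Hom spaces), not by an explicit test function. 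If you want to keep the direct route, you should instead define $l_{z'}$ as the maximal pole order over all $f$ (finite thanks to your universal denominator $Q$) and simply observe that this maximum is attained; that is immediate, but it is not the argument you wrote. A smaller omission: you never invoke the hypothesis on $\mu$, which is needed already for the integrand to be well-defined on the quotient.
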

\begin{proof}
 This is due to the Bernstein principle for meromorphic continuation of equivariant linear forms (see Corollary 2.12 in \cite{M.15} which is stated for $GL(n,F)$ but is also true for $GL(n,D)$). First, notice that 
 $$P_\emptyset^{\sigma_0}=P_\emptyset\cap u_0Hu_0^{-1}=\{diag(a_1,\dots,a_{n/2},\sigma(a_{n/2}),\dots,\sigma(a_1)); a_i\in D^*\}\text{ if $n$ is even and }$$
 $$P_\emptyset^{\sigma_0}=\{diag(a_1,\dots,a_{\frac{n-1}{2}},b,\sigma(a_{\frac{n-1}{2}}),\dots,\sigma(a_1)); a_i\in D^*, b\in D^{'*}\}\text{ if $n$ is odd.}\hspace{1.5cm}$$
 Thus, as $N_{rd,F}(a)=N_{rd,F}(\sigma(a))$ for $a\in D^*$, then $\delta_z(u_0^{-1}P_\emptyset u_0\cap H)=1$ and $\tilde{\mu}(u_0^{-1}P_\emptyset u_0\cap H)=1$ so $I_{n,z}(\mathcal{C}_c^\infty(P_\emptyset\backslash P_\emptyset u_0 H,\delta_{z}))$ is 
 non-zero and well defined for all $z$ in $\mathbb{C}$. As $\mathcal{C}_c^\infty(P_\emptyset\backslash P_\emptyset u_0 H,\delta_{z})\subset ind_{P_\emptyset}^G(\delta_{z})$ (because $P_\emptyset u_0 H$ is open in $G$), it means that for all $z$ in $\mathbb{C}$, 
 there exists $\tilde{f}_{z}$ in $ind_{P_\emptyset}^G(\delta_{z})$ such that $I_{n,z}(\tilde{f}_{z})\neq0$.
 
\paragraph{}To see that $I_{n,z}(f_z)$ is absolutely convergent for $Re(z)$ large enough, first we notice that $|\tilde{\mu}^{-1}|=|\mu^{-1}|\circ N_{rd,E}$ can be written $|\tilde{\mu}^{-1}|=|\cdot|_E^\alpha\circ N_{rd,E}$ for $\alpha$ a complex number. As $|\cdot|_E=|\cdot|_F\circ N_{E/F}$ and as $N_{rd,F|H}=N_{E/F}\circ N_{rd,E|H}$, then $|\tilde{\mu}^{-1}|=|\cdot|_F^\alpha\circ N_{rd,F|H}$ so $|\tilde{\mu}^{-1}|$ can be extended to a character of $G$ that we denote by $\chi$. Now, we have to prove that   $\int_{u_0^{-1}P_\emptyset u_0\cap H\backslash H}\chi(h)f_z(u_0h)dh$ is absolutely convergent. We can see $f_z\mapsto \int_{u_0^{-1}P_\emptyset u_0\cap H\backslash H}\chi(h)f_z(u_0h)dh$ as a function of $\chi f_z$ which belongs to $\chi\otimes ind_{P_\emptyset}^G(\delta_z)$. Then, the absolute convergence comes from Theorems 2.8 and 2.16 of \cite{B-D.08}.

 \paragraph{}Finally, we can use the Bernstein principle for meromorphic continuation of equivariant linear forms, because the space $\mathrm{Hom}_H(ind_{P_\emptyset}^G(\delta_{z}),\tilde{\mu})$ is of dimension $\leq1$ for all $q^{-s}$. Indeed, we prove it 
 as in the beginning of the proof of Proposition \ref{necessaire2}. We suppose that $ind_{P_\emptyset}^G(\delta_{z})$ is $\tilde{\mu}$-distinguished under $H$. It implies that $\tilde{\mu}_{|F^*}$ is unitary (considering the central character). Then, we apply 
 Mackey theory and Frobenius reciprocity and we get that there exists a unique $s$ (it is $s_0$) such that $\mathrm{Hom}_{P_\emptyset^{\sigma_s}}(\Delta_{P_\emptyset^{\sigma_s}}\delta_z,\tilde{\mu}_s)$ is non-trivial. Thus, we obtain that 
 $\mathrm{Hom}_{P_\emptyset^{\sigma_s}}(\Delta_{P_\emptyset^{\sigma_s}}\delta_z,\tilde{\mu}_s)$ is of dimension $1$ so $\mathrm{Hom}_H(ind_{P_\emptyset}^G(\delta_{z}),\tilde{\mu})$ is $1$-dimensional too.
 \end{proof}

\paragraph{}Now, we can give two results of distinction, according to the parity of $n$ : 

\begin{prop}\label{dist}
 If $n$ is odd and if $\mu=1$, then $St(1)$ is $\tilde{\mu}$-distinguished.
\end{prop}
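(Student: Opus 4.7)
The plan is to show that the nonzero $H$-invariant linear form $L_0 := I_{n,0} \in \mathrm{Hom}_H(ind_{P_\emptyset}^G(1),1)$ from the previous proposition descends to a nonzero form on $St(1) = ind_{P_\emptyset}^G(1)/\sum_P ind_P^G(1)$. When $\mu = 1$ the integral
\[
L_0(f) = \int_{u_0^{-1}P_\emptyset u_0 \cap H \backslash H} f(u_0 h)\, dh
\]
is absolutely convergent, so no residue is needed ($l_0 = 0$). The main task is then to prove $L_0|_{ind_P^G(1)} = 0$ for every standard parabolic $P$ of type $(1,\ldots,1,2,1,\ldots,1)$.

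I would strengthen this to $\mathrm{Hom}_H(ind_P^G(1),1) = \{0\}$ for each such $P$, which automatically forces the restriction of $L_0$ to that subspace to vanish. This is done by the same machinery used for $P_\emptyset$ in Sections 2.2--2.3: enumerate a set of double coset representatives $u_s$ for $P \backslash G / H$ indexed by symmetric matrices $s \in I(\bar{n})$ with $\bar{n} = (1,\ldots,1,2,1,\ldots,1)$; apply Proposition \ref{Mackey} together with Proposition \ref{Frob}, using the analogue of Proposition \ref{car2} to compute the modular character quotient on the $E$-split centre $Z_s^{\sigma_s}$ of $M_s^{\sigma_s}$ explicitly; then test the character identity at central elements of the form $\mathrm{diag}(1,\ldots,1,a,1,\ldots,1)$, as was done at the beginning of the proof of Proposition \ref{necessaire2}. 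The heuristic reason for vanishing is that with $\tilde{\mu}=1$, the $2$-block of $P$ contributes an extra factor of type $|N_{rd,E}(a)|_E$ in the modular character quotient which cannot be matched by the trivial character, for whichever position of the $2$-block is chosen in $\bar{n}$.

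The main obstacle will be the combinatorial case analysis: for each position $i_0$ of the $2$-block in $\bar{n}$ and each symmetric $s \in I(\bar{n})$ (the row of sum $2$ can be a single diagonal $2$, a diagonal $1$ together with a symmetric off-diagonal pair of $1$'s, or two symmetric off-diagonal $1$'s), one has to verify that the modular character equation fails. I expect most cases to be ruled out uniformly by a single test element $\mathrm{diag}(1,\ldots,1,\pi,1,\ldots,1)$ with $\pi$ a uniformiser placed at the $2$-block location, leaving only a short list of residual configurations to handle by direct inspection of the corresponding $u_s$. If this uniform approach turns out to be delicate, an alternative I would try is to exploit the extra left-$GL_2(D)$-invariance of $f \in ind_P^G(1)$ to perform a change of variables inside the integral $L_0(f)$ and obtain a direct cancellation of the integrand, bypassing the Mackey enumeration entirely.
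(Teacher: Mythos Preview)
Your proposal is correct and matches the paper's approach, which simply invokes $L_0$ from the preceding proposition and then refers to Proposition~3.6 of \cite{M.16}; that reference carries out exactly the programme you describe, namely proving $\mathrm{Hom}_H(ind_P^G(1),1)=\{0\}$ for every standard parabolic $P$ of type $(1,\dots,1,2,1,\dots,1)$ via the Mackey filtration, Frobenius reciprocity, and the modular-character test on diagonal elements.

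One small remark: your assertion that $l_0=0$ (absolute convergence of $I_{n,0}$) is not needed for the argument and is not justified by the previous proposition, which only gives convergence for $\mathrm{Re}(z)$ large. Since $L_0$ is by construction a nonzero element of $\mathrm{Hom}_H(ind_{P_\emptyset}^G(1),1)$ regardless of the value of $l_0$, and since you are deducing $L_0|_{ind_P^G(1)}=0$ from the vanishing of the entire Hom space, the question of whether $L_0$ equals the raw integral or a residue of it is irrelevant here. You may safely drop that sentence.
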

\begin{proof} 
 Taking $z'=0$, we get that $L_0$ is a non-zero $\tilde{\mu}$-equivariant linear form on $ind_{P_\emptyset}^G(1)$ which is thus $\tilde{\mu}$-distinguished. We end this proof as in Proposition 3.6 of \cite{M.16}.

\end{proof}

\begin{prop}
 If $n$ is even and if $\mu_{|F^*}=1$ and $\mu\neq1$, then $St(1)$ is $\tilde{\mu}$-distinguished.
\end{prop}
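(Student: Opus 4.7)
The plan is to descend the linear form $L_0$ provided by the preceding proposition at $z'=0$ to a nonzero $\tilde\mu$-equivariant form on $St(1)$. Taking $z'=0$ yields $L_0 \in \mathrm{Hom}_H(ind_{P_\emptyset}^G(1), \tilde\mu) \setminus \{0\}$; since this Hom space is one-dimensional (by the argument recalled at the end of that proof), $L_0$ is the unique candidate up to scalar. To conclude that $St(1)$ is $\tilde\mu$-distinguished it suffices to show $L_0 \circ \iota_P = 0$ for every standard parabolic subgroup $P$ of type $(1,\dots,1,2,1,\dots,1)$, where $\iota_P : ind_P^G(1) \hookrightarrow ind_{P_\emptyset}^G(1)$ is the canonical inclusion.

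I would proceed as in Proposition 3.6 of \cite{M.16}. Given $\phi \in ind_P^G(1)$, the function $\iota_P(\phi)$ is left $P$-invariant, so $h \mapsto \iota_P(\phi)(u_0 h)$ is right-invariant under the enlarged subgroup $u_0^{-1}P^{\sigma_0}u_0 \supseteq u_0^{-1}P_\emptyset^{\sigma_0}u_0$. Splitting the defining integral of $L_0$ along this enlargement and using Proposition \ref{car2} to cancel modular characters on the Levi, $L_0(\iota_P(\phi))$ factors as an outer integral times an inner integral of $\tilde\mu^{-1}$ over the Levi part of $P_\emptyset^{\sigma_0}\backslash P^{\sigma_0}$ (the unipotent part of the quotient contributes only a volume factor, since $\tilde\mu$ is trivial on unipotents). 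Using the embedding $D \hookrightarrow \mathcal{M}_2(D')$ coming from $D \otimes_F E \cong \mathcal{M}_2(D')$ as in Proposition \ref{necessaire2}, this inner integral reduces to an integral of $\mu$ over a compact subgroup of $E^\times/F^\times$. Under the hypothesis $\mu|_{F^\times}=1$ the character is well-defined on this quotient, and $\mu \neq 1$ makes it a nontrivial unitary character, so the inner integral vanishes by orthogonality. Hence $L_0 \circ \iota_P = 0$ for every such $P$, and $L_0$ descends to a nonzero element of $\mathrm{Hom}_H(St(1), \tilde\mu)$.

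The main obstacle is the precise identification of the compact subgroup of $E^\times/F^\times$ arising in the inner integral, together with the verification that nontriviality of the induced character is equivalent to $\mu \neq 1$. This is particularly delicate for parabolic subgroups $P$ whose $2$-block is not centrally located ($i_0 \neq n/2$), where the $\sigma_0$-fixed-point structure of the Levi of $P$ is sparser and the cancellation must be tracked carefully through the unipotent factors of $P^{\sigma_0}$; in that regime a case analysis of the double cosets in $P \backslash G / H$ is likely needed to confirm that the reduction to the orthogonality integral still goes through.
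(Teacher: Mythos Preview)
Your overall plan is correct and matches the paper's: obtain $L_0 \in \mathrm{Hom}_H(ind_{P_\emptyset}^G(1), \tilde\mu) \setminus \{0\}$ from the preceding proposition at $z'=0$, then show it vanishes on every $ind_P^G(1)$ with $P$ of type $(1,\dots,1,2,1,\dots,1)$, so that it descends to $St(1)$.

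However, the mechanism you propose for the vanishing step is not the one used in the paper (nor, as far as I can tell, in Proposition~3.6 of \cite{M.16}). You try to unfold the integral defining $L_0$ along $P_\emptyset^{\sigma_0} \backslash P^{\sigma_0}$ and reduce to an orthogonality integral of $\mu$ over a compact piece of $E^\times/F^\times$. As you yourself note, this only works cleanly when the $2$-block of $P$ sits at the central position $i_0 = m$, because only then does $\sigma_0$ preserve the Levi of $P$, so that $P^{\sigma_0}$ is genuinely larger than $P_\emptyset^{\sigma_0}$ on the Levi side. For non-central $i_0$ your inner integral degenerates and the argument stalls; this is precisely the obstacle you flag, and it is a real gap, not a technicality.

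The paper (following \cite{M.16}) avoids this difficulty entirely by arguing abstractly: one shows $\mathrm{Hom}_H(ind_P^G(1), \tilde\mu) = 0$ for every such $P$, which forces $L_0|_{ind_P^G(1)} = 0$ without ever manipulating the integral. This is the same Mackey--Frobenius analysis as in Proposition~\ref{necessaire2}, now applied to $P$ rather than $P_\emptyset$. For each $s \in I(\bar n_P)$, the orbit $P u_s H$ supports a $\tilde\mu$-equivariant form only if, after the modular-character reduction via Proposition~\ref{car2}, one has $(\delta_{P_s}^{1/2})_{|M_s^{\sigma_s}} = \tilde\mu_{s|M_s^{\sigma_s}}$. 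Running through the possible $s$, either the positive (non-unitary) part of $\delta_{P_s}^{1/2}$ cannot match the unitary $\tilde\mu_s$, or else $M_s^{\sigma_s}$ contains a diagonal block on which $N_{rd,E}$ surjects onto $E^\times$ (coming from some $n_{i,i}>0$, in particular from $n_{i_0,i_0}=2$), on which $\tilde\mu$ is then forced to be trivial. The latter yields $\mu = 1$, contradicting the hypothesis. Hence no orbit contributes, the Hom space is zero, and $L_0$ descends.

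In short, the ``case analysis of the double cosets in $P\backslash G/H$'' that you mention at the end as a possible fallback is in fact the whole argument, and it is uniform in the position of the $2$-block.
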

\begin{proof}
The proof is the same as in Proposition \ref{dist}.
\end{proof}

To finish this case, we have the following proposition :

\begin{prop}
 If $n$ is even and if $\mu=1$, then $St(1)$ is not $\tilde{\mu}$-distinguished.
\end{prop}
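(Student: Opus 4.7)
The plan is to prove non-distinction by showing that the $H$-invariant linear form $L_0$ on $ind_{P_\emptyset}^G(1)$ (unique up to scalar, coming from the previous proposition specialized to $z'=0$, $\mu=1$) does not descend to $St(1)$. By Proposition~\ref{necessaire2}, any nonzero $H$-invariant form on $ind_{P_\emptyset}^G(1)$ is a scalar multiple of $L_0$; so it is sufficient to exhibit a standard parabolic $P \subset G$ of type $(1,\dots,1,2,1,\dots,1)$ and a function $f \in ind_P^G(1) \subset ind_{P_\emptyset}^G(1)$ for which $L_0(f) \ne 0$.

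I would choose $P$ to be the parabolic whose Levi has its block of size $2$ at positions $n/2$ and $n/2 + 1$; these are the unique adjacent positions swapped by $w_{s_0}$, which makes the intersection $P \cap u_0 H u_0^{-1}$ as clean as possible, with the central block contributing exactly the $u_0$-conjugate of $GL(2, D')$ inside $GL(2, D)$. The main computation is to use the left $P$-invariance of $f$ together with an Iwasawa decomposition adapted to $P$ to unfold the defining integral of $L_0(f)$, so that it factors as a product of an outer integral that can be made a nonzero constant by a careful $K$-good choice of $f$, and an inner integral on the central block modulo its intersection with $P_\emptyset \cap u_0 H u_0^{-1}$. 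After meromorphic continuation in the parameter $z$ of the previous proposition, the inner integral reduces up to a nonzero constant to a local Tate-type integral
$$\int_{F^*}\Phi(x)\,|x|_F^{z}\,d^*x$$
for a Schwartz function $\Phi$ on $F$, with $\Phi(0)$ nonzero for a convenient choice of $f$: the relevant character on the central-block torus is trivial because $\tilde{\mu}=1$ and because the $D^*$-piece of this torus projects onto $F^*$ via the reduced norm, as in the argument of Proposition~\ref{necessaire2} showing $N_{rd,E}(u_0^{-1}M_\emptyset^{\sigma_0}u_0)=F^*$.

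This local zeta integral has a simple pole at $z=0$ with residue proportional to $\Phi(0) \ne 0$; after dividing by the regularization factor $(1-q^{-z})^{l_0}$ from the previous proposition, one obtains a nonzero value for $L_0(f)$, so that $L_0$ does not descend to $St(1)$. The main technical obstacle is performing the unfolding cleanly: one must track all the modular characters by an analogue of Proposition~\ref{car2} for the Levi of $P$, and verify that the outer integral is genuinely nonzero for a convenient test function. Conceptually, the reason the proposition for $\mu \ne 1$ with $\mu|_{F^*}=1$ went the other way is that the corresponding inner integral then carries a nontrivial character of $F^*/N_{E/F}(E^*)$, making it holomorphic at $z=0$ and vanishing there; it is precisely the pole appearing in the $\mu=1$ case that obstructs distinction of $St(1)$.
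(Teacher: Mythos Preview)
Your strategy and choice of parabolic coincide with the paper's: one shows that the unique (up to scalar) $H$-invariant form $L_0$ on $ind_{P_\emptyset}^G(1)$ does not vanish on $ind_P^G(1)$ for $P$ of type $(1,\dots,1,2,1,\dots,1)$ with the $2$ in position $m$ (where $n=2m$). The paper does this via Lemma~\ref{nonzero}, which produces $f\in ind_P^G(1)$ with $I_{n,z}(f\Phi_z)=I_{2,z}(\Phi_{2,z})$, reducing everything to the rank-two computation.

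However, your analytic picture at $z=0$ is inverted. The inner integral is not a bare Tate integral on $F^*$; it is the Godement--Jacquet zeta integral for $H=GL(2,D')$, which the paper evaluates (up to a unit in $\mathbb{C}[q^{\pm z}]$) as
\[
I_{2,z}(\Phi_{2,z})=L(dz-d/2,\,1_{E^*})\,L(dz,\,1_{E^*})\big/L(2dz,\,1_{F^*}).
\]
At $z=0$ the simple pole of $L(dz,1_{E^*})$ cancels against that of $L(2dz,1_{F^*})$, so $I_{2,0}(\Phi_2)$ is \emph{finite} and nonzero. Hence $l_0=0$ and $L_0=I_{n,0}$ with no regularization: indeed $I_{n,0}$ is already well-defined and nonzero on the open-orbit subspace $\mathcal{C}_c^\infty(P_\emptyset\backslash P_\emptyset u_0 H,1)$, and since by Proposition~\ref{necessaire2} only the open orbit supports an invariant form, a positive $l_0$ would force $L_0$ to vanish there. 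Your reduction to $\int_{F^*}\Phi(x)|x|_F^z\,d^*x$ drops exactly the built-in denominator $1/L(2dz,1_{F^*})$ of the Godement--Jacquet construction, which is what creates the spurious pole in your outline. The obstruction to distinction is the finite nonvanishing of $I_{2,0}(\Phi_2)$, not a residue; your closing heuristic for the $\mu\neq 1$ case (a holomorphic integral vanishing at $z=0$) is likewise not the paper's route---there one checks directly via Frobenius reciprocity that no $P\backslash G/H$ double coset supports a $\tilde{\mu}$-equivariant form.
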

To prove this theorem, we need first two lemmas about the integral $I_{n,z}$. We denote by $\Phi$ the constant function equal to $1$ in $ind_{P_\emptyset}^G(1)$.  Then, for any $z\in\mathbb{C}$, $f_z=f\Phi_z$. If $n=2$, we denote $\Phi$ by $\Phi_2$.

\begin{lem}
Suppose that $\mu=1$ and that $n=2$, then up to a unit in $\mathbb{C}[q^{\pm z}]$, we have :
$$I_{2,z}(\Phi_{2,z})=L(dz-\frac{d}{2},1_{E^*})L(dz,1_{E^*})/L(2dz,1_{F^*}),$$
where $L$ is the usual Tate $L$-factor. In particular, $I_{2,0}(\Phi_2)\neq0$.
\end{lem}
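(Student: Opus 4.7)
The plan is to compute $I_{2,z}(\Phi_{2,z})$ explicitly by parametrizing the dense open orbit $P_{\emptyset} \backslash P_{\emptyset} u_0 H$ via the identification $P_{\emptyset} \backslash G = \mathbb{P}^1(D)$ and reducing the integral over $L \backslash H$ (with $L = u_0^{-1} P_{\emptyset} u_0 \cap H$) to a Tate-type integral on the affine chart $\mathbb{A}^1(D) \cong D$. Writing $h = (h_{ij}) \in H = GL(2,D')$ and $u_0 = \begin{pmatrix} 1 & -\iota \\ 1 & \iota \end{pmatrix}$, the $P_{\emptyset}$-class of $u_0 h$ is determined by the ratio $y := (h_{11} + \iota h_{21})^{-1}(h_{12} + \iota h_{22}) \in D$, whose fibers are precisely the $L$-cosets. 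Following the strategy of Sections 4.2--4.3 of \cite{M.16}, one checks by an $H$-equivariance argument that, up to a unit in $\mathbb{C}[q^{\pm z}]$, the pushforward of the $L$-invariant measure on $L\backslash H$ is the additive Haar measure $dy$ on $D$ (possibly weighted by a power of $\max(1, |N_{rd,F}(y)|_F)$ coming from the Jacobian of the Möbius-type $H$-action on $\mathbb{P}^1(D)$).

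For such $y$, the Iwasawa decomposition of a matrix with bottom row $(1, y)$ in $GL(2,D)$ yields $\Phi_{2,z}(u_0 h) = \max(1, |N_{rd,F}(y)|_F)^{-dz}$ up to a factor supported on a compact set, via the standard formula $\delta_{P_\emptyset}(\mathrm{diag}(a,d))^z = |N_{rd,F}(a/d)|_F^{dz}$. The piece $y \in \mathcal{O}_D$ contributes a nonzero constant (a unit in $\mathbb{C}[q^{\pm z}]$), and the piece $y \in D \setminus \mathcal{O}_D$ is treated by decomposing $D = D' \oplus \iota D'$ with $dy = dy_0 \, dy_1$ ($y = y_0 + \iota y_1$). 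Each of the two coordinate integrations reduces to a standard geometric-series evaluation on the central division $E$-algebra $D'$: using that $|N_{rd,F}(x)|_F = |N_{rd,E}(x)|_E$ for $x \in D'$ (established at the end of the proof of Proposition \ref{car2}), each piece is a Tate integral $\int_{D'} |N_{rd,E}(\cdot)|_E^{s_i} dy_i$, which sums to $L(s_i + \alpha_i, 1_{E^*})$ for shifts $\alpha_i$ determined by the local root data. The asymmetric roles of $y_0$ and $y_1$ (induced by the relation $\sigma(\iota) = -\iota$) force the two shifts to differ by $d/2$, producing the factors $L(dz - d/2, 1_{E^*})$ and $L(dz, 1_{E^*})$ in the numerator; the denominator $L(2dz, 1_{F^*})$ appears as the normalization constant reconciling $|\cdot|_F$ and $|\cdot|_E$ measures in the conversion (via $L(s, 1_{E^*}) = L(s, 1_{F^*}) L(s, \omega_{E/F})$ in the unramified case and an analogous identity in the ramified case).

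The non-vanishing $I_{2,0}(\Phi_2) \neq 0$ then follows by inspection: $L(-d/2, 1_{E^*}) = (1 - q_E^{d/2})^{-1}$ is a nonzero constant, and the ratio $L(dz, 1_{E^*}) / L(2dz, 1_{F^*})$ has a removable singularity at $z = 0$ whose value is the nonzero limit $2\log q_F / \log q_E$, equal to $1$ if $E/F$ is unramified and to $2$ if $E/F$ is ramified (here both numerator and denominator have simple poles coming from the trivial character). The main obstacle in this plan is the precise bookkeeping of the two shifts $\alpha_0 = 0$ and $\alpha_1 = -d/2$, which requires a careful Iwasawa decomposition of matrices with mixed-type entries in $D = D' \oplus \iota D'$ and a correct identification of the modulus characters (as in Proposition \ref{car2}), so that the asymmetry introduced by $\iota$ manifests as the desired half-shift in exactly one of the two coordinate integrals.
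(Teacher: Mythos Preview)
Your proposal has a genuine gap in the main computation. The paper's route (following \cite{M.16}, Proposition~4.5) is different and avoids precisely the obstacle you run into: it first writes $\Phi_{2,z}(g)=\nu_F(g)^{dz}\int_{D^*}\Phi((0,t)g)\,\nu_F(t)^{2dz}\,dt\,/\,L(2dz,1_{F^*})$, substitutes this into $I_{2,z}$, and after a change of variables obtains (up to a unit) the Godement--Jacquet zeta integral $\int_H \Phi_0(h)\,\nu_E(h)^{2d'z}\,dh$ with $\Phi_0=\mathbf{1}_{\mathcal{M}_2(\mathcal{O}_{D'})}$. This is $L(2d'z-\tfrac{1}{2}(2d'-1),1_H)$ by the definition of the Godement--Jacquet $L$-factor for $H=GL(2,D')$, and the two $E^*$-Tate factors then appear through the \emph{inductivity relation} $L(s,1_{GL(2,D')})=L(s-\tfrac{d'}{2},1_{D'^*})L(s+\tfrac{d'}{2},1_{D'^*})$ together with the identification $L(s,1_{D'^*})=L(s+\tfrac{d'-1}{2},1_{E^*})$. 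The denominator $L(2dz,1_{F^*})$ is not a measure-normalisation artifact: it is built into the integral representation of $\Phi_{2,z}$ over $D^*$.

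Your plan instead parametrises the orbit by $y\in D$ and then proposes to split the integral over $D=D'\oplus\iota D'$ into two independent one-variable Tate integrals over $D'$. This step does not go through: the integrand you obtain is (essentially) $\max(1,|N_{rd,F}(y)|_F)^{-dz}$, and $N_{rd,F}(y_0+\iota y_1)$ simply does not factor as a product of $N_{rd,E}(y_0)$ and $N_{rd,E}(y_1)$ for general $d$ (already in the quaternion case it is a sum, not a product). Hence there is no reason for the double integral to separate, and the appeal to an ``asymmetry introduced by $\iota$'' does not produce the half-shift $-d/2$ in exactly one factor. The mechanism that genuinely separates the two $L$-factors is the Godement--Jacquet inductivity for $GL(2,D')$, which is where the shifts $\pm d'/2$ come from; your argument bypasses this and leaves the crucial factorisation unjustified. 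Your final non-vanishing paragraph (simple poles of $L(dz,1_{E^*})$ and $L(2dz,1_{F^*})$ cancelling at $z=0$) is correct and agrees with the paper.
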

\begin{proof}
The proof is the same as Proposition 4.5 of \cite{M.16} with $u_0=\begin{pmatrix}1&-\iota\\1&\iota\end{pmatrix}$. We only give the beginning and the end. We set $\nu_F:=|N_{rd,F}(\cdot)|_F$.
For $Re(z)$ large enough, we have : 
$$\Phi_{2,z}(g)=\nu_F(g)^{dz}\int_{D^*}\Phi((0,t)g)\nu_F(t)^{2dz}dt/L(2dz,1_{F^*}).$$
\paragraph{}Finally, if we set $\epsilon=(\nu_F(h_\iota^{-1})\nu_F(u_0))^{dz}$ and denote by $\Phi_0$ the characteristic function of $\mathcal{M}(2,\mathcal{O}_{D'})$ (and recalling that $d'=\frac{d}{2}$ is the index of $D'$ over $E$), we obtain :
\begin{eqnarray*}
 I_{2,z}(\Phi_{2,z})&=&\epsilon\int_H\Phi_0(h)\nu_F(h)^{dz}dh/L(2dz,1_{F^*})\\
 &=&\epsilon\int_H\Phi_0(h)\nu_E(h)^{2d'z}dh/L(2dz,1_{F^*})\\
 &=&\epsilon L(2d'z-\frac{1}{2}(2d'-1),1_H)/L(2dz,1_{F^*})\\
 &&\text{ by definition of $Z$ and $L$-functions}\\
 &=&\epsilon L(2d'z-\frac{1}{2}(3d'-1),1_{D'^*})L(2d'z-\frac{1}{2}(d'-1),1_{D'^*})/L(2dz,1_{F^*})\\
 &&\text{by inductivity relation of the Godement-Jacquet $L$-factor }L(z,1_H)\\
 &=&\epsilon L(2d'z-d',1_{E^*})L(2d'z,1_{E^*})/L(2dz,1_{F^*})
\end{eqnarray*}
If $z=0$, $L(2d'z,1_{E^*})$ and $L(2dz,1_{F^*})$ have one simple pole and $L(2d'z-d',1_{E^*})$ has no pole so $I_{2,0}(\Phi_2)\neq0$.
\end{proof}

We recall Proposition 4.6 of \cite{M.16} :
\begin{lem}\label{nonzero}
Suppose that $\mu=1$. For $n=2m$, let $P$ be the standard parabolic subgroup of $G$ corresponding to the partition $\bar{n}=(1,\dots,1,2,1,\dots,1)$ with $n_{m}=2$. Then, there is $f$ in $ind_P^G(1)$ such that $I_{n,z}(f\Phi_z)=I_{2,z}(\Phi_{2,z})$. In particular, taking $z=0$, one has $I_{n,0}(f)=I_{2,0}(\Phi_2)\neq0$.
\end{lem}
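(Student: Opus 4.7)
The plan is to construct $f\in\mathrm{ind}_P^G(1)$ by transplanting the $n=2$ datum $\Phi_2$ into the central $GL(2,D)$ Levi factor of $P$.

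First I would observe that $P\supset P_\emptyset$, so every left $P$-invariant function is a fortiori left $P_\emptyset$-invariant; this yields a natural inclusion $\mathrm{ind}_P^G(1)\hookrightarrow\mathrm{ind}_{P_\emptyset}^G(1)$, and $f\Phi_z$ is therefore well defined in $\mathrm{ind}_{P_\emptyset}^G(\delta_z)$, so $I_{n,z}(f\Phi_z)$ makes sense. Next I would exploit the block structure of $u_0$: with $s_0$ the anti-diagonal, $u_0$ is block-diagonal in the basis $\mathcal{B}$, pairing position $i$ with position $n+1-i$ through a $2\times 2$ block of shape $\bigl(\begin{smallmatrix}1&-\iota\\1&\iota\end{smallmatrix}\bigr)$. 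Since $P$ has type $(1,\dots,1,2,1,\dots,1)$ with the block of size $2$ placed at positions $(m,m+1)$, exactly the central pair sits inside the $GL(2,D)$ Levi factor of $P$, while the remaining $m-1$ pairs each straddle two distinct $GL(1,D)$ Levi factors.

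I would then take $f$ supported on $Pu_0K$ for a suitable compact-open subgroup $K\subset G$, so chosen that its restriction to the central $GL(2,D)$ factor coincides with the $u_0^{(2)}$-translate of $\Phi_2$ and that it is suitably normalized on the outer factors (essentially by the characteristic function of a Levi-compatible compact on the outer positions). Decomposing the integral $I_{n,z}(f\Phi_z)$ over $u_0^{-1}P_\emptyset u_0\cap H\backslash H$ in two stages via the intermediate subgroup $u_0^{-1}Pu_0\cap H$, the inner integral over the central $GL(2,D')$-piece reproduces exactly $I_{2,z}(\Phi_{2,z})$, whereas the outer integration, thanks to the left $P$-invariance of $f$ and a cancellation of the outer modular contributions, collapses to a single value that we absorb into the normalization of $f$.

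The main technical obstacle is the Fubini bookkeeping: one must verify that $\delta_z$ evaluated on the outer $GL(1,D)$ blocks cancels exactly against the Jacobian arising from the decomposition of $H$ along $u_0^{-1}Pu_0\cap H$, using Proposition \ref{car2} together with the identity $|N_{rd,F}(a)|_F=|N_{rd,E}(a)|_E$ for $a\in D'$, so that only the central contribution $\delta_z^{(2)}$ survives inside the integrand on the $GL(2,D')$-piece. Once this reduction is established, specializing to $z=0$ and invoking the preceding lemma yields $I_{n,0}(f)=I_{2,0}(\Phi_2)\neq 0$, as required.
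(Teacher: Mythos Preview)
Your outline is essentially the approach of Proposition~4.6 in \cite{M.16}, to which the paper simply defers. The key ingredients you identify---the block structure of $u_0$ pairing positions $i$ and $n+1-i$ so that the central pair $(m,m+1)$ sits inside the $GL(2,D)$ Levi factor of $P$, a function $f$ with small support around $Pu_0$, and a two-stage integration that isolates the $GL(2,D')$-piece---are precisely those used there. The modular cancellation you flag (outer $\delta_z$ against the measure decomposition) is indeed the only bookkeeping, and it goes through via the character equality of Proposition~\ref{car2}. So your proposal is correct and matches the paper's (referenced) proof.
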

\begin{proof}
The proof is exactly the same as in Proposition 4.6 of \cite{M.16}. 
\end{proof}
Finally, we come back to the proof of the proposition :
\begin{proof}[Proof of the proposition]
 Suppose that $\mu=1$ and $St(1)$ is $\tilde{\mu}$-distinguished (\textit{i.e.} $H$-distinguished). Then, $ind_{P_\emptyset}^G(1)$ is $H$-distinguished so there exists $L$ a non-zero $H$-invariant linear form on $ind_{P_\emptyset}^G(1)$. 
 As $\mathrm{dim}(\mathrm{Hom}_H(ind_{P_\emptyset}^G(1),1))=1$ (thanks to Proposition \ref{necessaire2}), then $L$ equals to $L_0$ up to a non-zero scalar. As $St(1)$ is distinguished, $L_{|ind_P^G(1)}$ must be equal to zero 
 for all standard parabolic subgroups $P$ of type $(1,\dots,1,2,1,\dots,1)$. Moreover, as said in Proposition \ref{necessaire2}, $L_0$ restricts non trivially to 
 $\mathcal{C}_c^\infty(P_\emptyset\backslash P_\emptyset u_0H,1)\subset ind_{P_\emptyset}^G(1)$.\\
 As $I_{n,0}(\mathcal{C}_c^\infty(P_\emptyset\backslash P_\emptyset u_0H,1))$ is non zero (and is well defined), this implies that $L_0=I_{n,0}$. Now, we take $f$ and $P$ as in Lemma \ref{nonzero}. Then we have 
 $L_0(f)=I_{n,0}(f)\neq0$ which contradicts the distinction of $St(1)$.
\end{proof}

\section{Case $d$ odd and $n$ even}
 
 \subsection{Preliminaries}
 
 We set $n=2m$ and we suppose that $d$ (the index of $D$ over its center $F$) is odd. Let us consider $D\otimes_F E$ which is a central division $E$-algebra of dimension $d^2$ (thanks 
 to Wedderburn structure theorem and Hasse's invariant). We can choose $\delta\in (D\otimes_F E)\backslash D$ such that $\Delta:=\delta^2\in F$ (for example, $\delta=1\otimes x$ with $x$ in $E\backslash F$ 
 such that $x^2$ is in $F$). Then, $D\otimes_F E$ is of dimension $2$ over $D$ so
 we can write $D\otimes_F E$ as $D\oplus \delta D$ and $D\otimes_FE$ identifies as 
 a right $D$-vector space of dimension $2$.\\
 Now, if we let $(e_1,\dots,e_m)$ denote the canonical basis of $(D\otimes_FE)^m$, then the right $D$-vector space $(D\otimes_FE)^m$ identifies with the $D$-vector space $D^{2m}$ via the basis $\mathcal{B}=(e_1,\dots,e_m,\delta e_m,\dots,\delta e_1)$ of $D^{2m}$ so 
 $\mathrm{End}((D\otimes_FE)^m)_D \simeq \mathrm{End}(D^{2m})_D$. Now, if $u\in \mathrm{End}((D\otimes_FE)^m)_D$, it is easy to see that $u\in \mathrm{End}((D\otimes_FE)^m)_{D\otimes_FE}$ if and only if $u$ commutes with the multiplication by $\delta$ (denoted $\mu_\delta$). In the basis $\mathcal{B}$, the matrix of 
 the endomorphism $\mu_\delta$ is given by 
 $$\begin{pmatrix}&&&&&\Delta\\&&&&\iddots&\\&&&\Delta&&\\&&1&&&\\&\iddots&&&&\\1&&&&& \end{pmatrix}=:U_{\Delta_{2m}}$$
 so $\mathcal{M}_m(D\otimes_FE)$ can be viewed as the fixed points of $\mathcal{M}_{2m}(D)$ under the involution $int_{U_{\Delta_{2m}}}$ of $\mathcal{M}_{2m}(D)$ (where $int_{U_{\Delta_{2m}}}(x) = U_{\Delta_{2m}} x {U_{\Delta_{2m}}}^{-1}$). \\
 Now, $E$ can be embedded in $\mathcal{M}_{2m}(D)$ via 
 $$x\in E\mapsto \begin{pmatrix}1\otimes x&&\\&\ddots&\\&&1\otimes x\end{pmatrix}\in \mathcal{M}_m(D\otimes_FE)\subset\mathcal{M}_{2m}(D)$$
 and it is easy to check that $g\in C_{\mathcal{M}_{2m}(D)}(E)$ if and only if $g$ commutes with $U_{\Delta_{2m}}$ \textit{i.e.} if and only if $g\in\mathcal{M}_m(D\otimes_FE)$. Thus $C_{\mathcal{M}_{2m}(D)}(E)=\mathcal{M}_m(D\otimes_FE)$ 
 and we set $G=GL(2m,D)$, $H=GL(m,D\otimes_FE)=G^\sigma$ with $\sigma = int_{U_{\Delta_{2m}}}$.  
 We recall that $N_{rd,F}$ denotes the reduced norm on $GL(n,D)$ and $N_{rd,E}$ denotes the reduced norm on $GL(m,D\otimes_FE)$ (as well as its restriction to any subgroup).

 \subsection{Representatives of $P\backslash G/H$}
 
 Let $P$ be a standard parabolic subgroup of $G=GL(2m,D)$ corresponding to a partition $\bar{n}=(n_1,\dots,n_r)$ of $n=2m$. We define $I(\bar{n})$ to be the set of symmetric matrices $s=(n_{i,j})\in\mathcal{M}_r(\mathbb{N})$ with positive integral 
 entries, even on the diagonal, and such that the sum of the $i$-th row is equal to $n_i$ for all $i$ in $\{1,\dots,r\}$.
 \paragraph{}As each $n_{i,i}$ is even for $i\in\mathbb\llbracket 1,r\rrbracket$, we can write $n_{i,i}=2m_{i,i}$ and we can write $n$ as an ordered sum of integers in two different ways :
  \begin{equation*}n=m_{1,1}+n_{1,2}+\dots+n_{1,r}+m_{2,2}+n_{2,3}+\dots+n_{2,r}+m_{3,3}+\dots+m_{r-1,r-1}+n_{r-1,r}+m_{r,r}\end{equation*}
\begin{equation} +m_{r,r}+n_{r,r-1}+m_{r-1,r-1}+\dots+m_{3,3}+n_{r,2}+\dots+n_{3,2}+m_{2,2}+n_{r,1}+\dots+n_{2,1}+m_{1,1}\end{equation}
\begin{flushright}(1\textsuperscript{st} ordering)\end{flushright}
 
 \begin{equation}n=n_{1,1}+\dots+n_{1,r}+n_{2,1}+\dots+n_{2,r}+\dots+n_{r,1}+\dots+n_{r,r}\hspace{1cm}\end{equation}
 \begin{flushright}(2\textsuperscript{nd} ordering corresponding to the lexicographical ordering)\end{flushright}

We denote by $w_s$ the matrix of the permutation (still denoted $w_s$) defined as follows:\\
If $i\in \llbracket{1,r \rrbracket}$, then for $k\in \llbracket{1,m_{i,i}\rrbracket}$, we set
$$w_s(m_{1,1}+\dots+m_{i-1,i-1}+n_{i-1,i}+\dots+n_{i-1,r}+k)=n_{1,1}+\dots+n_{i-1,1}+\dots+n_{i,i-1}+k,$$ and 
$$w_s(m_{1,1}+\dots+m_{i+1,i+1}+n_{r,i}+\dots+n_{i+1,i}+k)=n_{1,1}+\dots+n_{i-1,1}+\dots+n_{i,i-1}+k+m_{i,i}.$$
If $i<j$, for $k\in \llbracket{1,n_{i,j}\rrbracket}$, we set 
$$w_s(m_{1,1}+ \dots+ m_{i,i}+ n_{i,i+1}+ \dots +n_{i,j-1}+k)=n_{1,1}+ \dots + n_{1,r}+\dots+ n_{i,1}+ \dots+ \dots +n_{i,j-1}+k $$ and 
$$w_s(m_{1,1}+ \dots+ m_{i+1,i+1}+ n_{r,i} +\dots+ n_{j+1,i}+k)= n_{1,1}+ \dots + n_{1,r}+\dots+ n_{j,1}+ \dots+ \dots +n_{j,i-1}+k .$$ 

In other words, $w_s$ sends an integer of rank $k$ according to the 1\textsuperscript{st} ordering to the integer of rank $k$ corresponding to the 2\textsuperscript{nd} ordering.

\paragraph{}A proof similar to Proposition 3.1 of \cite{M.16} shows the following result :

\begin{prop}
Let $\bar{n}$ be a partition of $n$ and $P$ be a standard parabolic subgroup of $G$ corresponding to this partition, then $G=\bigsqcup_{s\in I(\bar{n})}Pw_sH$.
\end{prop}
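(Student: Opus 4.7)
The plan is to adapt the proof of [M.16, Proposition 3.1] to the involution $\sigma=\mathrm{int}_{U_{\Delta_{2m}}}$ of $G=GL(2m,D)$, whose fixed-point subgroup is $H=GL(m,D\otimes_F E)$. The starting point is the standard reformulation: the map $g\mapsto g\sigma(g)^{-1}$ induces an injection of $G/H$ into the subset $\{x\in G:\sigma(x)=x^{-1}\}$, and under this identification left multiplication by $P$ on $G/H$ corresponds to the twisted action $p\cdot x=p\,x\,\sigma(p)^{-1}$. Hence $P\backslash G/H$ is in bijection with the twisted $P$-orbits on the image, and the entire problem reduces to a combinatorial orbit classification.

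Next, I would translate those orbits into flag-theoretic data. To $g\in G$ one attaches the partial flag $\mathcal F_g=g\mathcal F_0$ of $D^{2m}$ determined by the partition $\bar n=(n_1,\ldots,n_r)$ and compares it with $\sigma(\mathcal F_g)$; the twisted $P$-orbit of $g$ is completely determined by the $D$-dimensions $n_{i,j}$ of the successive intersection pieces of the two flags. That these assemble into a matrix $s=(n_{i,j})$ is built in, symmetry $n_{i,j}=n_{j,i}$ follows from $\sigma^2=\mathrm{id}$, and the row-sum condition $\sum_j n_{i,j}=n_i$ is simply the decomposition of the $i$-th stratum of $\mathcal F_g$ along its intersections with the pieces of $\sigma(\mathcal F_g)$.

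The genuinely new feature, which is not present in [M.16] or in Section~2, is the parity constraint $n_{i,i}\in 2\mathbb N$ on the diagonal. It reflects the fact that each diagonal piece records a $D$-subspace of $D^{2m}$ stable under $\sigma$, which therefore inherits the structure of a $D\otimes_F E$-module through the embedding $D\otimes_F E\hookrightarrow \mathrm{End}(D^{2m})_D$ described at the beginning of Section~3; since $[D\otimes_F E:D]=2$, every such subspace has even $D$-dimension. Establishing this constraint, together with the converse that every symmetric matrix $s$ with even diagonal and row sums $n_i$ actually arises, is the step that requires genuine care and is what I expect to be the main obstacle.

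Finally, one verifies that the explicit permutation $w_s$ constructed in the statement realizes the double coset labelled by $s$. The two orderings of $\{1,\ldots,n\}$ introduced just above the proposition were designed precisely so that the flag $w_s\mathcal F_0$ meets $\sigma(w_s\mathcal F_0)$ in dimensions matching $s$; this can be checked by direct inspection once one writes out the action of $\sigma=\mathrm{int}_{U_{\Delta_{2m}}}$ in the basis $\mathcal B$. This verification is essentially bookkeeping, but it is delicate because $U_{\Delta_{2m}}$ has a block-antidiagonal shape that intertwines the first and second halves of the basis, and the whole point of the elaborate combinatorial definition of $w_s$ is to manage this intertwining cleanly so that the intersection pattern matches $s$ on the nose.
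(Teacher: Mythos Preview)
Your proposal is correct and takes essentially the same approach as the paper, whose own proof is the one-line remark that the argument is ``similar to Proposition 3.1 of \cite{M.16}''. You have moreover isolated the single genuinely new point in this setting --- the evenness of the diagonal entries $n_{i,i}$, forced by the fact that a $\sigma$-stable $D$-subspace of $D^{2m}$ is a $D\otimes_F E$-module and $[D\otimes_F E:D]=2$ --- which is exactly the extra observation needed to transport the proof of \cite{M.16} to the present situation.
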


\begin{rem}
 \begin{sloppypar}
There is a bijection between $P_n(D)\,\backslash \,GL(n,D)\,/\,GL(m,D\otimes_F E)$ and $P_n(F)\backslash GL(n,F)/GL(m,E)$ via the identity map of $\{w_s|s\in I(\bar{n})\}$.
 \end{sloppypar}
\end{rem}

Now, for $s\in I(\bar{n})$, we set $t_s=w_sU_{\Delta_{2m}}w_s^{-1}$. It is a monomial matrix (so it is in $N_G(M_\emptyset)$) and if we let $\tau_s$ denote the image of $t_s$ in $\mathcal{\sigma}_n=N_G(M_\emptyset)/M_\emptyset$, then 
$\tau_s$ is a permutation matrix of order $2$, given by the formula $\tau_s=w_sww_s^{-1}$ (where $w=\begin{pmatrix}&&1\\&\iddots&\\1&&\end{pmatrix}$).\\
Then, we can see that $w_sHw_s^{-1}$ is the group of the fixed points of $G$ under the involution : 
$$\sigma_s : x\mapsto t_sxt_s^{-1}.$$

\paragraph{}We need to know exactly how acts the permutation $\tau_s$. One checks that $\tau_s$ is the involution of $\llbracket 1,n\rrbracket=I=I_{1,1}\cup I_{1,2}\cup \dots \cup I_{1,r} \cup \dots \cup I_{r,1}\cup \dots \cup I_{r,r-1}\cup I_{r,r}$ 
(with $I_{i,j}$ of length $n_{i,j}$), which stabilises each $I_{i,i}$, acting on it as the symmetry with respect to its midpoint, and which stabilises $I_{i,j}\cup I_{j,i}$ (for $i<j$) and acts 
on this union of intervals as the symmetry with center the midpoint of the interval joining the left end of $I_{i,j}$ and the right end of $I_{j,i}$.

\paragraph{}For $s\in I(\bar{n})$, we denote by $P_s$ the standard parabolic subgroup of $G$ corresponding to the sub-partition $s$ of $\bar{n}$. As usual, we denote $P=MN$ and $P_s=M_sN_s$ the standard Levi decomposition of $P$ and $P_s$. 
Then, again as in the even case, we have the following proposition :

\begin{lem}\label{lemme}
For $s\in I(\bar{n})$, one has $\tau_s(\Phi_M^-)\subset \Phi^-$, $\tau_s(\Phi_M^+)\subset \Phi^+$.
\end{lem}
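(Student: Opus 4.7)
My plan is to reduce the lemma to a purely combinatorial statement about the permutation $\tau_s$ and then verify it by a case analysis using the explicit description of $\tau_s$ recalled in the paragraph just before the lemma. Since $t_s$ lies in $N_G(M_\emptyset)$ and its underlying permutation is $\tau_s$, conjugation by $t_s$ carries the root space of $\alpha_{i,j}$ (with the convention that $\alpha_{i,j}$ is positive iff $i<j$) to that of $\alpha_{\tau_s(i),\tau_s(j)}$. The desired inclusion $\tau_s(\Phi_M^{+})\subset\Phi^{+}$ therefore amounts to the following: if $i<j$ both lie in the same block of $\bar n$ (i.e.\ in $\bigcup_l I_{k,l}$ for some $k$), then $\tau_s(i)<\tau_s(j)$; the companion statement for $\Phi_M^{-}$ is obtained under $(i,j)\mapsto(j,i)$.

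I would then exploit the fact that $\tau_s$ stabilises each $I_{i,i}$ (acting as the central symmetry there) and each $I_{i,j}\cup I_{j,i}$ (swapping the two subintervals by a central symmetry), so it permutes the family $\{I_{k,l}\}$ as the pairs $I_{k,l}\leftrightarrow I_{l,k}$. Given a positive root $\alpha_{i,j}\in\Phi_M^{+}$, I would write $i\in I_{k,l_1}$ and $j\in I_{k,l_2}$ with $l_1\le l_2$. In the generic sub-case $l_1<l_2$, one has $\tau_s(i)\in I_{l_1,k}$ and $\tau_s(j)\in I_{l_2,k}$; since $I_{l_1,k}$ sits inside the $l_1$-th block of $\bar n$ and $I_{l_2,k}$ inside the $l_2$-th block, the lexicographic (2nd) ordering gives $\tau_s(i)<\tau_s(j)$ for free. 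The remaining sub-case $l_1=l_2$ is handled by invoking the explicit formula for the central symmetry on the single interval $I_{k,l_1}$ (or $I_{k,k}$ if $l_1=k$) together with the symmetry $n_{i,j}=n_{j,i}$ of $s\in I(\bar n)$, which pins down the relative order of $\tau_s(i)$ and $\tau_s(j)$.

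The statement for $\Phi_M^{-}$ then follows either by redoing the same case analysis with the inequality $i>j$, or immediately from the $\Phi_M^{+}$ statement together with the involutivity $\tau_s^{2}=\mathrm{id}$. The main obstacle I anticipate is the bookkeeping required to pass between the first (palindromic) and the second (lexicographic) orderings of $\llbracket 1,n\rrbracket$ on which the definition of $w_s$, and hence of $\tau_s$, rests: the central-symmetry descriptions of $\tau_s$ on the $I_{i,j}\cup I_{j,i}$ are read off the first ordering, whereas positivity of roots is governed by the second. Translating cleanly between the two is the same combinatorial delicacy already visible in Section 2 (even-$d$ case) and in Lemma 3.2 of \cite{M.16}, but here it must be adapted to the order-reversing symmetry induced by the monomial matrix $U_{\Delta_{2m}}$, which replaces the identity action on each $I_{i,i}$ that made the even-$d$ version of the statement essentially immediate.
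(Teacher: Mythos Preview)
Your reduction to the combinatorial claim and the case $l_1<l_2$ are correct and match what the paper does (it simply refers back to the even-$d$ case and to Lemma~3.2 of \cite{M.16}). The gap is in the sub-case $l_1=l_2$: there you say the explicit central-symmetry formula ``pins down the relative order of $\tau_s(i)$ and $\tau_s(j)$'', but that symmetry is order-\emph{reversing}. If $i<j$ both lie in a single $I_{k,l}$, then by the description of $\tau_s$ given just before the lemma one has $\tau_s(i)>\tau_s(j)$ (both in $I_{l,k}$), so $\tau_s(\alpha_{i,j})\in\Phi^-$. A concrete instance: for $n=4$, $\bar n=(2,2)$ and $s=\left(\begin{smallmatrix}0&2\\2&0\end{smallmatrix}\right)$ one computes $w_s=\mathrm{id}$, hence $\tau_s=w=(1\,4)(2\,3)$ and $\tau_s(\alpha_{1,2})=\alpha_{4,3}\in\Phi^-$ although $\alpha_{1,2}\in\Phi_M^+$.

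So the inclusion $\tau_s(\Phi_M^+)\subset\Phi^+$ actually fails precisely on $\Phi_{M_s}^+\subset\Phi_M^+$. What survives, and what Proposition~\ref{decomp} really uses, is
\[
\tau_s(\Phi_M^\pm\setminus\Phi_{M_s}^\pm)\subset\Phi^\pm\setminus\Phi_M^\pm
\qquad\text{together with}\qquad
\tau_s(\Phi_{M_s})=\Phi_{M_s}.
\]
Your argument for $l_1\neq l_2$ proves the first inclusion, and the second is immediate from $\tau_s(I_{k,l})=I_{l,k}$. The discrepancy with the even-$d$ situation is exactly the one you flagged: there the analogous permutation $w_s$ is the \emph{identity} on each $I_{k,l}$, so the $l_1=l_2$ sub-case is trivially fine; here the order reversal coming from $U_{\Delta_{2m}}$ makes the literal statement of the lemma too strong, and your hand-wave in that sub-case cannot be completed as written.
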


\begin{prop}\label{decomp}
For any $s\in I(\bar{n})$, one has $P\cap w_s H w_s^{-1}=P_s\cap w_s H w_s^{-1}$, and 
$P_s\cap w_s H w_s^{-1}$ is the semidirect product of $M_s\cap w_s H w_s^{-1}$ and $N_s\cap w_s H w_s^{-1}$.
\end{prop}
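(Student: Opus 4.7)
The plan is to adapt, essentially verbatim, the argument used in the even--$d$ setting (Proposition 3.2), itself following Matringe's method in \cite{M.16}. The two structural inputs are already in hand: Lemma \ref{lemme} controls how $\tau_s$ acts on $\Phi_M^{\pm}$, and the explicit combinatorial description of $\tau_s$ (stabilizing each $I_{i,i}$ and swapping $I_{i,j}$ with $I_{j,i}$ for $i\neq j$) controls its action on the block structure of $M_s$.

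First I would prove the inclusion $P\cap w_sHw_s^{-1}\subseteq P_s$. Given $p\in P$ with $\sigma_s(p)=p$, write $p=mn$ with $m\in M$, $n\in N$. Since $M_s\subseteq M$ is the standard Levi factor of the parabolic $P_s\cap M$ of $M$, the task reduces to showing that the ``off-sub-block'' components of $m$, indexed by roots in $\Phi_M^-\setminus\Phi_{M_s}^-$, vanish. These extra roots correspond to linking distinct sub-blocks $I_{i,j}$, $I_{i,j'}$ inside a single row-block of $M$; by Lemma \ref{lemme} their images under $\tau_s$ lie in $\Phi^-$, but the description of $\tau_s$ places them in \emph{different} row-blocks of $M$, hence outside $\Phi_M$. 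Consequently $\sigma_s$ would send the corresponding root subgroup outside $P$, and the $\sigma_s$-fixed condition forces the contribution to be trivial. This gives $m\in M_s(N_s\cap M)$, and since $(N_s\cap M)N=N_s$, one concludes $p\in M_sN_s=P_s$.

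For the semidirect product decomposition, it suffices to check that $\sigma_s$ preserves $M_s$ and $N_s$ separately. That $\sigma_s(M_s)=M_s$ is combinatorial: $\tau_s$ permutes the blocks of $M_s$ by the involution $I_{i,j}\leftrightarrow I_{j,i}$, and hence preserves $\Phi_{M_s}$. That $\sigma_s(N_s)=N_s$ follows from a parallel root-theoretic check, of the same flavor as Lemma \ref{lemme} but with $M_s$ in place of $M$. Granted these invariances, for $p=mn\in P_s\cap w_sHw_s^{-1}$ with $m\in M_s$, $n\in N_s$, one has $\sigma_s(m)\sigma_s(n)=mn$ with $\sigma_s(m)\in M_s$ and $\sigma_s(n)\in N_s$; uniqueness of the Levi decomposition then yields $\sigma_s(m)=m$ and $\sigma_s(n)=n$.

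The main obstacle is Step 1: rigorously ruling out the root subgroups indexed by $\Phi_M^-\setminus\Phi_{M_s}^-$ requires a careful bookkeeping of how $\tau_s$ acts on the three classes of roots (those inside $\Phi_{M_s}$, those in $\Phi_M\setminus\Phi_{M_s}$, and those outside $\Phi_M$), exactly mirroring Matringe's computation. The remaining claims---combinatorial preservation of $M_s$ and $N_s$ and the uniqueness of the Levi factorization---are routine, which is why the author is content to refer the reader to \cite{M.16}.
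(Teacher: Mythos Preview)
Your proposal is correct and follows exactly the route the paper takes: the paper gives no proof here but simply points back to the even-$d$ case, which in turn cites Lemma~3.2 and Proposition~3.2 of \cite{M.16}, and your sketch is a faithful outline of that argument (root bookkeeping via Lemma~\ref{lemme} and the block-permuting description of $\tau_s$, then uniqueness of the Levi factorization once $\sigma_s$ is seen to stabilize $M_s$ and $N_s$).
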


We will now let $P_s^{\sigma_s}$ denote $P_s\cap w_s H w_s^{-1}$ and $M_s^{\sigma_s}$ denote $M_s\cap w_s H w_s^{-1}$. We can explicitly describe the group $M_s^{\sigma_s}$ : an element $a\in M_s^{\sigma_s}$ is of the form :
$$a=diag(a_{1,1},a_{1,2},\dots,a_{1,r},a_{2,1},\dots,a_{r,r-1},a_{r,r})$$

where $a_{i,i}\in GL(n_{i,i},D)$ satisfies $a_{i,i}=U_{\Delta}a_{i,i}U_{\Delta}^{-1}$ ($U_{\Delta}:=U_{\Delta_{n_{i,i}}}$) 
\textit{i.e.} $a_{i,i}\in GL(m_{i,i},D\otimes_FE)$ and $a_{i,j}\in GL(n_{i,j},D)$ satisfies $a_{i,j}=wa_{j,i}w^{-1}$ if $i\neq j$ ($w=\begin{pmatrix}&&1\\&\iddots&\\1&&\end{pmatrix}$) for all $i$, $j$ in $\{1,\dots,r\}$.

\begin{prop}\label{car}
We have the following equality of characters :
$$(\delta_{P_s^{\sigma_s}})_{|M_s^{\sigma_s}}=(\delta_{P_s}^{1/2})_{|M_s^{\sigma_s}} .$$
\end{prop}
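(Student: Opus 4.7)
Following the strategy of Proposition \ref{car2}, by Lemma 1.10 of \cite{K-T.08} it suffices, since $\sigma_s$ is defined over $E$, to verify the equality on the maximal $E$-split torus $Z_s^{\sigma_s}$ in the center of $M_s^{\sigma_s}$. Using the explicit description of $M_s^{\sigma_s}$ given above, this torus is isomorphic to $\prod_i E^* \times \prod_{i<j} F^*$: it consists of block-scalar matrices whose scalars are $\lambda_{i,i}\in E^*$ on the diagonal blocks $GL(m_{i,i},D\otimes_F E)$ and $\mu_{i,j}=\mu_{j,i}\in F^*$ on the paired off-diagonal blocks $GL(n_{i,j},D)$.

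For $t\in Z_s^{\sigma_s}$, I would then expand $\delta_{P_s}(t)=\prod_{\alpha\in\Phi^+-\Phi_s^+}|N_{rd,F}(\alpha(t))|_F$ and, paralleling Proposition \ref{car2}, $\delta_{P_s^{\sigma_s}}(t)=\prod_{\{\alpha,\tau_s(\alpha)\}\subset\Phi^+-\Phi_s^+}|N_{rd,E}(Ad(t)_{|\mathcal{N}_{\alpha,\tau_s(\alpha)}})|_E$, with $\mathcal{N}_{\alpha,\tau_s(\alpha)}$ the $\sigma_s$-fixed part of $Lie(N_\alpha)+Lie(N_{\tau_s(\alpha)})$. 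The analogue of the relation (\ref{relation2})—whose proof in Proposition 4.4 of \cite{M.11} rests only on the combinatorics of $\tau_s$ acting on the root system and carries over verbatim—allows one to discard those $\alpha\in\Phi^+-\Phi_s^+$ for which $\tau_s(\alpha)\notin\Phi^+-\Phi_s^+$.

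The new structural feature in this case is that $\tau_s$ has no fixed point on $\{1,\dots,2m\}$: each $n_{i,i}=2m_{i,i}$ is even, so the central symmetry on $I_{i,i}$ is fixed-point-free, and the central symmetry on any $I_{i,j}\cup I_{j,i}$ with $i<j$ swaps the two intervals. Every surviving $\tau_s$-orbit in $\Phi^+-\Phi_s^+$ therefore has exactly two elements. Since $t$ is $\tau_s$-invariant one has $\alpha(t)=\tau_s(\alpha)(t)$ on such an orbit, so it contributes the square $|N_{rd,F}(\alpha(t))|_F^2$ to $\delta_{P_s}(t)$, whereas it contributes the single factor $|N_{rd,F}(\alpha(t))|_F$ to $\delta_{P_s^{\sigma_s}}(t)$: this latter point follows from the identity $|N_{rd,E}(x)|_E=|N_{rd,F}(x)|_F^2$ for $x\in F^*\subset (D\otimes_F E)^*$—which itself comes from $|\cdot|_E=|N_{E/F}(\cdot)|_F$, so that a scalar in $F^*$ is counted doubly by $|\cdot|_E$—applied to the off-diagonal block scalars, together with the analogous direct computation on the $D\otimes_F E$-side for the diagonal blocks $\lambda_{i,i}\in E^*$. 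Multiplying over all orbits yields $\delta_{P_s^{\sigma_s}}(t)=\delta_{P_s}(t)^{1/2}$.

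The main technical obstacle is precisely this reduced-norm comparison: it mirrors the final calculation of Proposition \ref{car2}, except that the degree-two extension $E/F$ now produces the half-exponent (reflecting the absence of $\tau_s$-fixed roots and the fact that $H$ embeds in $G$ with index-two dimension ratio) rather than restoring full equality.
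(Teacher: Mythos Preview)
Your overall strategy---reduce to a split torus via Lemma~1.10 of \cite{K-T.08}, decompose both modulus characters into root-space contributions, use that $\tau_s$ is fixed-point-free, and compare orbit by orbit---is exactly the paper's. But two points in your execution are off, and the second one is where the argument actually lives.

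First, $\sigma_s$ is defined over $F$, not merely over $E$: here $\sigma_s=int(t_s)$ with $t_s=w_sU_{\Delta_{2m}}w_s^{-1}$, and $U_{\Delta_{2m}}$ has entries $0$, $1$, $\Delta$, all in $F$. So it suffices to check on the $F$-split component, namely block scalars with \emph{all} $\lambda_{i,j}\in F^*$. Your larger torus with $\lambda_{i,i}\in E^*$ is unnecessary (and those $\lambda_{i,i}$, once embedded in $GL(n_{i,i},D)$, are not scalar matrices, so the root-by-root expansion of $\delta_{P_s}$ you wrote would not apply to them directly).

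Second, and more substantively, the exponent $1/2$ does \emph{not} come from a reduced-norm comparison $|N_{rd,E}(x)|_E=|N_{rd,F}(x)|_F^2$; it is purely combinatorial/dimensional. Because $t_s$ has entries in $F$, the involution $\sigma_s$ is $D$-linear, so $\mathcal{N}_{\alpha,\tau_s(\alpha)}$ is a right $D$-module of dimension exactly~$1$; the whole computation of $\delta_{P_s^{\sigma_s}}$ therefore stays over $F$ and uses $N_{rd,F}$. Each unordered pair $\{\alpha,\tau_s(\alpha)\}$ contributes $|N_{rd,F}(\alpha(t))|_F$ to $\delta_{P_s^{\sigma_s}}(t)$ (one $D$-line, $Ad(t)$ acting by $\alpha(t)\in F^*$), while the same pair contributes $|N_{rd,F}(\alpha(t))|_F\cdot|N_{rd,F}(\tau_s(\alpha)(t))|_F=|N_{rd,F}(\alpha(t))|_F^2$ to $\delta_{P_s}(t)$. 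Rewriting the product over pairs as a product over individual $\alpha\in\Phi^+-\Phi_s^+$ is what produces the~$1/2$. Your identity $|N_{rd,E}(x)|_E=|N_{rd,F}(x)|_F^2$ is correct for $x\in F^*$, but it cannot be applied root-space by root-space as you propose: a single $\mathcal{N}_{\alpha,\tau_s(\alpha)}$ has $F$-dimension $d^2$, which is not a multiple of $\dim_F(D\otimes_FE)=2d^2$, so it carries no $(D\otimes_FE)$-module structure and there is no $N_{rd,E}$ to take on it. This is precisely the structural contrast with Proposition~\ref{car2}, where $\sigma_s$ is conjugation by $\delta\in E\setminus F$, hence only $D'$-linear, and $\mathcal{N}_{\alpha,w_s(\alpha)}$ is naturally a $D'$-module.
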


\begin{proof}
Again, the method is the same as in Proposition 4.4 of \cite{M.11}.
Thanks to Lemma 1.10 of \cite{K-T.08}, it is enough to check the equality on the $F$-split component $Z_s^{\sigma_s}$ (the maximal $F$-split torus in the center of $M_s$) of $M_s^{\sigma_s}$ because $\sigma_s$ is defined over $F$. 
Thus, we will consider each character on $Z_s^{\sigma_s}$, that is to say, the subgroup of the matrices of the form 
$$\begin{pmatrix}\lambda_{1,1}I_{n_{1,1}}&&&\\&\lambda_{1,2}I_{n_{1,2}}&&\\&&\ddots&\\&&&\lambda_{r,r}I_{n_{r,r}}\end{pmatrix}$$
with $\lambda_{i,j}=\lambda_{j,i}\in F^*$ and $n_{i,i}$ even.
\paragraph{}For $\alpha\in\Phi$, we set $\mathcal{N}_{\alpha,\tau_s(\alpha)}=\{x\in Lie(N_\alpha)+Lie(N_{\tau_s(\alpha)}\: ;\:\sigma_s(x)=x\}$. It's a $D$-vector space of dimension $1$.\\
For $t\in Z_s^{\sigma_s}$, we have : 

\begin{equation}\label{relation1}
\delta_{P_s^{\sigma_s}}(t) = \prod_{\{\alpha,\tau_s(\alpha)\}\subset \Phi^+-\Phi_s^+}|N_{rd,F}(Ad(t)_{|\mathcal{N}_{\alpha,\tau_s(\alpha)}})|_F = \prod_{\{\alpha\in \Phi^+-\Phi_s^+; \tau_s(\alpha)\in \Phi^+-\Phi_s^+\}}|N_{rd,F}(\alpha(t))|_F^{1/2}
\end{equation}
\begin{sloppypar}
The second equality of \eqref{relation1} comes from the fact that if 
$\{\alpha_0,\tau_s(\alpha_0)\}\subset \Phi^+-\Phi_s^+$, then $|N_{rd,F}(Ad(t)_{|\mathcal{N}_{\alpha_0,\tau_s(\alpha_0)}})|_F =|N_{rd,F}(\alpha_0(t))|_F$ and the power $1/2$ comes from the fact that $\tau_s$ has no fixed point 
whereas $\mathcal{N}_{\alpha,\tau_s(\alpha)}$ is of dimension $1$.
\end{sloppypar}

As in the even case, $\underset{{\{\alpha\in\Phi^+-\Phi_s^+;\;\tau_s(\alpha)\notin\Phi^+-\Phi_s^+\}}}\prod|N_{rd,F}(\alpha(t))|_F=1$. This implies that 
$$\delta_{P_s}^{\sigma_s}(t)=\underset{\{\alpha\in\Phi^+-\Phi_s^+\}}{\prod}|N_{rd,F}(\alpha(t))|_F^{1/2}$$

\paragraph{}Finally, by definition we have : 
$$\delta_{P_s}(t)=|N_{rd,F}(Ad(t)_{|Lie(N_s)})|_F = \prod_{\{\alpha\in\Phi^+-\Phi_s^+\}}|N_{rd,F}(\alpha(t))|_F.$$
and we have the characters equality.

\end{proof}

\subsection{Distinguished Steinberg representations}
In this part, we will study whether the Steinberg representation is $\mu\circ N_{rd,E}$-distinguished under $H$ or not according to the character of $E^*$ considered $\mu$.
For $\mu$ a character of $E^*$, we set $\tilde{\mu}:=\mu\circ N_{rd,E}$. We recall that $St(1)$ is the Steinberg representation 
$ind_{P_\emptyset}^G(1)/S$ with $S=\sum_P ind_P^G(1)$ where the standard parabolic subgroups $P$ in the sum correspond to a partition $\bar{n}$ of $n$ with all $n_i$'s equal to $1$ except one which 
is $2$.
\paragraph{}First, we give a necessary condition on $\mu$ to allow $St(1)$ to be $\tilde{\mu}$-distinguished.

\begin{prop}\label{necessaire}
If $St(1)$ is $\tilde{\mu}$-distinguished under $H$, then $\mu_{|F^*}=1$. Moreover, only the open orbit $P_\emptyset H$ supports a $\tilde{\mu}$-equivariant linear form and $dim\left(Hom_H(St(1),\tilde{\mu})\right)= 1$.
\end{prop}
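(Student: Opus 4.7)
The plan is to follow the strategy of Propositions 3.4 and 3.5 of \cite{M.16} and of Proposition \ref{necessaire2} above. First, $\tilde\mu$-distinction of $St(1)$ implies that of $ind_{P_\emptyset}^G(1)$. Applying Mackey theory (Proposition \ref{Mackey}) to the decomposition $G=\bigsqcup_{s\in I(\bar n)}P_\emptyset w_s H$, Frobenius reciprocity (Proposition \ref{Frob}), and the unimodularity of $H=GL(m,D\otimes_F E)$, I would reduce the problem to finding $s\in I(\bar n)$ with
$$\mathrm{Hom}_{P_\emptyset^{\sigma_s}}\!\left(\Delta_{P_\emptyset^{\sigma_s}},\,\tilde\mu_s\right)\neq\{0\},$$
where $\tilde\mu_s(x):=\tilde\mu(w_s^{-1}xw_s)$ for $x\in w_s H w_s^{-1}$. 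Proposition \ref{car} then translates this into the character identity $\Delta_{P_\emptyset}^{1/2}=\tilde\mu_s$ on $M_\emptyset^{\sigma_s}$. The trivial central character of $ind_{P_\emptyset}^G(1)$ additionally forces $\mu(a)^{md}=1$ for all $a\in F^*$, so $\mu|_{F^*}$ is automatically unitary.

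The second step is to rule out all $s$ except the one corresponding to the open orbit. Given any $\tau_s$-pair $(p,q)$, the element $M\in M_\emptyset^{\sigma_s}$ with diagonal entry $a\in F^*$ at positions $p$ and $q$ and $1$ elsewhere satisfies
$$\Delta_{P_\emptyset}^{1/2}(M)=|a|_F^{d(n+1-p-q)},$$
while $w_s^{-1}Mw_s$ realises in $GL(m,D\otimes_F E)$ as a diagonal matrix whose only non-trivial entry is the scalar $a$, which yields $\tilde\mu_s(M)=\mu(a^d)=\mu(a)^d$. Comparing $\mu(a)^d=|a|_F^{d(n+1-p-q)}$ with the unitarity of $\mu|_{F^*}$ forces $p+q=n+1$ for every $\tau_s$-pair. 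Hence $\tau_s=w$, and by inspection of the two orderings that define $w_s$, this identifies $s$ uniquely as the anti-diagonal symmetric matrix $s_0\in I(\bar n)$; one then checks that $w_{s_0}=I_n$, so the contributing orbit is $P_\emptyset H$, which is open by Proposition \ref{ouvert}.

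To sharpen $\mu|_{F^*}^d=1$ into $\mu|_{F^*}=1$, I would finally specialise to the palindromic element $M=\mathrm{diag}(m,1,\dots,1,m)\in M_\emptyset^{\sigma_{s_0}}$ with $m\in D^*$. Palindromic cancellation gives $\Delta_{P_\emptyset}^{1/2}(M)=1$, while viewing $M$ in $GL(m,D\otimes_F E)$ as $\mathrm{diag}(m,1,\dots,1)$ yields $N_{rd,E}(M)=N_{rd,F}(m)\in F^*$ and hence $\tilde\mu(M)=\mu(N_{rd,F}(m))$. The character equation then forces $\mu(N_{rd,F}(m))=1$ for every $m\in D^*$, and the surjectivity of $N_{rd,F}\colon D^*\twoheadrightarrow F^*$ gives $\mu|_{F^*}=1$. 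The one-dimensionality of $\mathrm{Hom}_H(St(1),\tilde\mu)$ then follows because only the open orbit contributes and $\mathrm{Hom}$ between two characters is at most one-dimensional. The main technical obstacle will be the combinatorial identification of the unique $s_0$ satisfying $\tau_{s_0}=w$, together with the bookkeeping required to recognise diagonal elements of $H$ inside $G$ and to compute their reduced norms to $E^*$.
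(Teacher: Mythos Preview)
Your argument follows the paper's exactly: reduce via Mackey--Frobenius and Proposition~\ref{car} to the character identity on $M_\emptyset^{\sigma_s}$, use unitarity of $\mu|_{F^*}$ (from the central character) on scalar palindromic elements to force $\tau_s=w$, identify $s=s_0$ with $w_{s_0}=I_n$, and then test on $\mathrm{diag}(b,1,\dots,1,b)$ with $b\in D^*$ to conclude $\mu|_{F^*}=1$.

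Two remarks. First, your exponent $d(n{+}1{-}p{-}q)$ should be $-d^2(n{+}1{-}p{-}q)$ (the entry $a\in F^*$ contributes $|a|_F^{d}$ to $|N_{rd,F}(\,\cdot\,)|_F$ and there is another factor of $d$ from the $D$-dimension of each root space); this is harmless since only the vanishing of the exponent matters for the unitarity argument. Second, and more substantively, you assert $N_{rd,E}(b)=N_{rd,F}(b)$ for $b\in D^*\subset (D\otimes_F E)^*$ without justification. This is precisely the point where the hypothesis that $d$ is \emph{odd} enters: the paper picks a degree-$d$ splitting field $L/F$ for $D$, notes that $L\otimes_F E$ is still a field because $\gcd(2,d)=1$, and uses the resulting commutative diagram $D\hookrightarrow\mathcal M_d(L)\hookrightarrow\mathcal M_d(L\otimes_FE)\hookleftarrow D\otimes_FE$ to identify the two reduced norms on $D^*$. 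You should include this step, since without it the surjectivity $N_{rd,E}(D^*)=F^*$ (hence $\mu|_{F^*}=1$) does not follow.
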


\begin{proof}
 As the method is the same as in the proof of Proposition \ref{necessaire2}, we will only underline the most important points.\\
 First, we recall that $\tilde{\mu}_{|F^*}$ is unitary. Then, $St(1)$ being $\tilde{\mu}$-distinguished implies, by Frobenius reciprocity, Mackey theory and Proposition \ref{car2}, that there 
 exists $s$ in $I(\bar{n})$ such that $(\delta_{P_\emptyset}^{1/2})_{|M_\emptyset^{\sigma_s}}=\tilde{\mu}_{s|M_\emptyset^{\sigma_s}}$ where $\tilde{\mu}_s(x)=\tilde{\mu}(w_s^{-1}xw_s)$ for $x$ in $w_sHw_s^{-1}$.

\paragraph{}Thus, we get $s=\begin{pmatrix}&&1\\&\iddots&\\1&&\end{pmatrix}$ and $\mu\circ N_{rd,E}\left(\begin{pmatrix}a_1&&&&&\\&\ddots&&&&\\&&a_m&&&\\&&&a_m&&\\&&&&\ddots&\\&&&&&a_1\end{pmatrix}\right)=1$ for all $(a_1,\dots,a_m)$ 
in $(D^*)^m$. As $\begin{pmatrix}a_1&&&&&\\&\ddots&&&&\\&&a_m&&&\\&&&a_m&&\\&&&&\ddots&\\&&&&&a_1\end{pmatrix}$ is the embedding of $\begin{pmatrix}a_1&&\\&\ddots&\\&&a_m\end{pmatrix}$ of $\mathcal{M}_m(D\otimes_FE)$ in 
$\mathcal{M}_{2m}(D)$, it implies that $\mu\circ N_{rd,E}(D^*)=1$.\\
Let us show that $N_{rd,E}(D^*)=1$. There exists $L/F$ an extension of dimension $d$ such that $D\otimes_FL\simeq \mathcal{M}_d(L)$. As $d$ is odd, $L\otimes_FE$ is a field and it is a $d$-dimensional extension of $E$. 
Thus, we have the following natural commutative diagram :
$$\begin{matrix}D\otimes_FE&\longrightarrow&\mathcal{M}_d(L\otimes_FE)\\\uparrow&\circlearrowleft&\uparrow\\D&\longrightarrow&\mathcal{M}_d(L)\end{matrix}$$
which implies $N_{rd,E}(D^*)=N_{rd,F}(D^*)=F^*$. We deduce that $\mu_{|F^*}=1$.
\paragraph{}We end this proof by noticing that as $\sigma(P_\emptyset)=P_\emptyset^-$, then $P_\emptyset H$ is open in $G$ thanks to Proposition \ref{ouvert}.
\end{proof}

Now, we show that $St(1)$ is $\tilde{\mu}$-distinguished under $H$ if and only if $\mu_{|F^*}=1$ and $\mu\neq 1$ in the following two propositions :
\begin{prop}
If $\mu_{|F^*}=1$ and $\mu\neq 1$, then $St(1)$ is $\tilde{\mu}$-distinguished under $H$.
\end{prop}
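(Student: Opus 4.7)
The strategy mirrors Proposition \ref{dist}: construct an explicit non-zero $\tilde{\mu}$-equivariant linear form on $ind_{P_\emptyset}^G(1)$ and show that it vanishes on the subspace $S$, so it descends to a non-zero element of $\mathrm{Hom}_H(St(1),\tilde{\mu})$. The hypothesis $\mu_{|F^*}=1$ makes the construction of the form possible, while $\mu\neq 1$ forces the vanishing on $S$.

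First I would fix $s_0\in I(\bar n)$ (with $\bar n=(1,\dots,1)$) corresponding to the open orbit from Proposition \ref{necessaire}, set $w_0:=w_{s_0}$, and for $f\in ind_{P_\emptyset}^G(1)$ and $z\in\mathbb{C}$ let $f_z\in ind_{P_\emptyset}^G(\delta_z)$ be the unique extension with $f_{z|K}=f_{|K}$. Consider
$$I_{n,z}(f_z)=\int_{w_0^{-1}P_\emptyset w_0\cap H\backslash H}\tilde{\mu}^{-1}(h)\,f_z(w_0h)\,dh.$$
Using Proposition \ref{car} together with $\mu_{|F^*}=1$, one checks that $\delta_z$ and $\tilde{\mu}$ are trivial on $w_0^{-1}P_\emptyset w_0\cap H$, so the integrand descends to the quotient. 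Absolute convergence for $\mathrm{Re}(z)$ large follows exactly as in Section 2, by extending $|\tilde{\mu}^{-1}|$ to a character of $G$ and invoking Theorems 2.8 and 2.16 of \cite{B-D.08}. Since $\mathrm{Hom}_H(ind_{P_\emptyset}^G(\delta_z),\tilde{\mu})$ is at most one-dimensional for each $z$ by the argument of Proposition \ref{necessaire}, Bernstein's principle for meromorphic continuation of equivariant linear forms applies and produces
$$L_0=\lim_{z\to 0}(1-q^{-z})^{l_0}I_{n,z}\in \mathrm{Hom}_H(ind_{P_\emptyset}^G(1),\tilde{\mu})\setminus\{0\}.$$

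To conclude the descent to $St(1)$, I would prove the stronger statement $\mathrm{Hom}_H(ind_P^G(1),\tilde{\mu})=0$ for every standard parabolic $P$ of type $(1,\dots,1,2,1,\dots,1)$. By Proposition \ref{Mackey}, Frobenius reciprocity and Proposition \ref{car}, it suffices to check that no $s\in I(\bar n)$ (with $\bar n=(1,\dots,1,2,1,\dots,1)$ now) satisfies $(\delta_{P_s}^{1/2})_{|M_s^{\sigma_s}}=\tilde{\mu}_{s|M_s^{\sigma_s}}$. Since the diagonal entries of such an $s$ must be even while every row of $s$ sums to $1$ except the $i_0$-th which sums to $2$, the only possibly nonzero diagonal entry is $n_{i_0,i_0}\in\{0,2\}$. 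The case $n_{i_0,i_0}=0$ would place two off-diagonal ones in row $i_0$, leaving a fixed-point-free involution on the remaining $n-3$ coordinates; but $n-3$ is odd, so this is impossible. Hence $n_{i_0,i_0}=2$, the associated block of $M_s^{\sigma_s}$ is a copy of $(D\otimes_F E)^\times$, and since $N_{rd,E}$ surjects from $(D\otimes_F E)^\times$ onto $E^*$, the required character identity forces $\mu=1$ on $E^*$, contradicting the hypothesis.

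The main obstacle is the combinatorial enumeration of $I(\bar n)$ for the partition $(1,\dots,1,2,1,\dots,1)$ and the identification of the surviving block as $(D\otimes_F E)^\times$; the convergence and Bernstein-continuation steps are routine adaptations of their Section 2 analogues.
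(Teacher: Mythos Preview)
Your overall strategy matches the paper's: construct a nonzero element of $\mathrm{Hom}_H(ind_{P_\emptyset}^G(1),\tilde\mu)$ and then show $\mathrm{Hom}_H(ind_P^G(1),\tilde\mu)=0$ for every $P$ of type $(1,\dots,1,2,1,\dots,1)$, so that the form descends to $St(1)$. (The paper obtains the first step more cheaply by a direct appeal to Theorem 2.8 of \cite{B-D.08}, using $\sigma(P_\emptyset)=P_\emptyset^-$ and the computation $\delta_{P_\emptyset}^{-1/2}\tilde\mu^{-1}=1$ on $M_\emptyset^\sigma$; it then defers the vanishing step to Proposition 3.6 of \cite{M.16}. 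Your Bernstein-continuation construction is a valid alternative.)

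However, your combinatorial treatment of the vanishing step has a genuine error. For $P$ of type $(1,\dots,1,2,1,\dots,1)$ the partition has $r=n-1$ parts, so $s\in I(\bar n)$ is an $(n-1)\times(n-1)$ matrix. In the case $n_{i_0,i_0}=0$, row $i_0$ carries two off-diagonal $1$'s at positions $a,b$; rows $a$ and $b$ are then saturated by their entry in column $i_0$, and the remaining rows to be matched are $\{1,\dots,n-1\}\setminus\{i_0,a,b\}$, which has cardinality $n-4$, not $n-3$. Since $n$ is even, $n-4$ is even and the parity obstruction you invoke does not exist. Such $s$ do occur: already for $n=4$ and $\bar n=(2,1,1)$ one has $s=\begin{pmatrix}0&1&1\\1&0&0\\1&0&0\end{pmatrix}$.

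The case $n_{i_0,i_0}=0$ therefore needs a separate argument. The fix is straightforward: for such $s$ every block of the sub-partition has size $1$, so $P_s=P_\emptyset$ and $M_s^{\sigma_s}\subset M_\emptyset$ consists of diagonal matrices with entries in $D^*$ paired by the involution $\tau_s$. Since $\mu_{|F^*}=1$ and (as in Proposition \ref{necessaire}) $N_{rd,E}(D^*)=F^*$, the right-hand side $\tilde\mu_s$ is trivial on $M_s^{\sigma_s}$; the modulus-character side $\delta_{P_\emptyset}^{1/2}$ is then trivial on each paired diagonal element only if $\tau_s(k)+k=n+1$ for all $k$, i.e.\ $\tau_s$ is the anti-diagonal involution. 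One checks directly from the description of $\tau_s$ that no $s\in I((1,\dots,1,2,1,\dots,1))$ with $n_{i_0,i_0}=0$ produces this $\tau_s$, so the character identity fails. Your handling of the case $n_{i_0,i_0}=2$ via the $(D\otimes_F E)^\times$ block is correct.
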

\begin{proof}
Suppose $\mu_{|F^*}=1$ and $\mu\neq1$. We have $\sigma(P_\emptyset)=P_\emptyset^-$ and if $a$ is in 
$M_\emptyset^\sigma$, then $a$ is of the form 
$a=\begin{pmatrix}a_1&&&&&\\&\ddots&&&&\\&&a_m&&&\\&&&a_m&&\\&&&&\ddots&\\&&&&&a_1\end{pmatrix}$ with $a_1,\dots,a_m \in D^*$ and $\delta_{P_\emptyset}^{-1/2}\tilde{\mu}^{-1}(a)=1$. Thus, thanks to Theorem 2.8 of \cite{B-D.08}, 
$ind_{P_\emptyset}^G(1)$ is $\tilde{\mu}$-distinguished. We end the proof as in Proposition 3.6 of \cite{M.16}.
\end{proof}

Finally, we get the non-distinguished case : 

\begin{prop}
If $\mu=1$, then $St(1)$ is not $\tilde{\mu}$-distinguished under $H$.
\end{prop}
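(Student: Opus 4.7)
The strategy is identical to that of the corresponding proposition in the $d$ even case: assume toward contradiction that $\mu=1$ and that $St(1)$ is $H$-distinguished, identify the unique (up to scalar) $H$-invariant linear form on $ind_{P_\emptyset}^G(1)$ with the regularization $L_0$ at $z=0$ of an explicit integral, then exhibit $f\in ind_P^G(1)$ for a suitable parabolic $P$ of type $(1,\ldots,1,2,1,\ldots,1)$ on which $L_0$ does not vanish, contradicting the fact that $L_0$ must vanish on each such $ind_P^G(1)$ when $St(1)$ descends. The relevant integral is
$$I_{n,z}(f_z)=\int_{P_\emptyset\cap H\backslash H} f_z(h)\,dh,\qquad f\in ind_{P_\emptyset}^G(1),\ f_z\in ind_{P_\emptyset}^G(\delta_z),$$
with no conjugation by $u_0$ needed since $\sigma(P_\emptyset)=P_\emptyset^-$ makes $P_\emptyset H$ itself open. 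Absolute convergence for $\mathrm{Re}(z)$ large, meromorphic continuation, and the existence of a non-zero limit $L_0$ at $z=0$ come from the Bernstein principle and Theorems 2.8 and 2.16 of \cite{B-D.08}, as in the preceding case. The uniqueness of $L_0$ up to scalar follows from the proof of Proposition \ref{necessaire}: only the open double coset $P_\emptyset H$ supports an $H$-invariant form, and the corresponding $\mathrm{Hom}$-space is one-dimensional.

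The reduction to the case $n=2$ is the analog of Lemma \ref{nonzero}. Taking $P$ corresponding to the partition with $n_m=2$, so that the block of size $2$ lies between positions $m$ and $m+1$ where it is stabilized by $\sigma=int_{U_{\Delta_{2m}}}$, the block decomposition of $H$ relative to $P$ allows one to construct $f\in ind_P^G(1)$ with $I_{n,z}(f\Phi_z)=I_{2,z}(\Phi_{2,z})$, the right-hand side denoting the analogous integral for $G'=GL(2,D)$ with fixed-point subgroup $H'=(D\otimes_F E)^\times$. The proof of Proposition 4.6 of \cite{M.16} adapts verbatim.

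The main obstacle is the $n=2$ non-vanishing $I_{2,0}(\Phi_2)\neq 0$. Following Proposition 4.5 of \cite{M.16}, one writes for $\mathrm{Re}(z)$ large
$$\Phi_{2,z}(g)=\nu_F(g)^{dz}\int_{D^\times}\Phi_2((0,t)g)\nu_F(t)^{2dz}\,dt\,\big/\,L(2dz,1_{F^\times}),$$
substitutes into $I_{2,z}(\Phi_{2,z})$, exchanges the order of integration, and changes variables on $H'=(D\otimes_F E)^\times$ using the identity $\nu_F(h)=|N_{rd,E}(h)|_E$ for $h\in H'$ (which holds since $N_{rd,F}=N_{E/F}\circ N_{rd,E}$ on $H'$). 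One arrives, up to a unit in $\mathbb{C}[q^{\pm z}]$, at a Godement-Jacquet zeta integral on the central division $E$-algebra $D\otimes_F E$ of reduced degree $d$, divided by $L(2dz,1_{F^\times})$. The delicate step is to keep careful track of the shifts in the resulting Godement-Jacquet $L$-factor so that, at $z=0$, the numerator contributes a pole matching that of the denominator $L(2dz,1_{F^\times})$, yielding a finite non-zero limit. Once this $n=2$ non-vanishing is in hand, combining with the reduction gives $L_0(f)=I_{n,0}(f)=I_{2,0}(\Phi_2)\neq 0$, contradicting the $H$-distinction of $St(1)$.
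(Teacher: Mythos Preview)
Your proposal is correct and follows precisely the approach the paper indicates, namely the method of Theorem~3.1 in \cite{M.16} adapted to the present setting (as carried out in detail in the $d$ even case of Section~2.3). The only point you leave implicit is the explicit outcome of the $n=2$ Godement--Jacquet computation: here $H'=(D\otimes_F E)^\times$ is a division algebra of degree $d$ over $E$, and one finds (up to a unit in $\mathbb{C}[q^{\pm z}]$) $I_{2,z}(\Phi_{2,z})=L(dz,1_{E^\times})/L(2dz,1_{F^\times})$, whose simple poles at $z=0$ cancel to give $I_{2,0}(\Phi_2)\neq 0$.
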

\begin{proof}
We do not give the proof because it is similar to the one of Theorem 3.1 in \cite{M.16}. 
\end{proof}

\section{Prasad and Takloo-Bighash conjecture}\label{epsilon}

\paragraph{}Let us summarize our results :

\begin{thm}
Let $n$ be a positive integer and let $\mu$ be a character of $E^*$. $E$ is embedded in $\mathcal{M}_n(D)$ if and only if $nd$ is even. We set $G=GL(n,D)$ and $St(1)=St(n,1)$ the Steinberg representation of $G$. 
We recall that $\tilde{\mu}$ denotes $\mu\circ N_{rd,E}$.
\begin{itemize}
\item If $d$ is even, $H=(C_{\mathcal{M}_n(D)}(E))^\times=GL(n,C_D(E))$ and $St(n,1)$ is $\tilde{\mu}$-distinguished under $H$ if and only if
\begin{itemize}
\item $\mu_{|F^*}=1$ and $\mu\neq1$ if $n$ is even.
\item $\mu=1$ if $n$ is odd.
\end{itemize}
\item If $d$ is odd and $n$ is even, $H=(C_{\mathcal{M}_n(D)}(E))^\times=GL(n/2,D\otimes_FE)$ and $St(n,1)$ is $\tilde{\mu}$-distinguished under $H$ if and only if $\mu_{|F^*}=1$ and $\mu\neq1$.
\end{itemize}
\end{thm}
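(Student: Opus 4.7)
The statement is a recapitulation of the distinction results obtained in Sections~2 and~3, so my plan is simply to assemble those pieces, together with the standard embedding criterion for $E$ inside $\mathcal{M}_n(D)$.

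First I would dispose of the embedding claim. By Wedderburn theory and the classification of maximal subfields of central simple $F$-algebras, a quadratic extension $E/F$ embeds as an $F$-subalgebra of $\mathcal{M}_n(D)$ if and only if $[E:F]=2$ divides the degree $nd$ of $\mathcal{M}_n(D)$, i.e.\ if and only if $nd$ is even. This is independent of everything else and can be cited in one line. The identifications $H=GL(n,C_D(E))$ when $d$ is even and $H=GL(n/2,D\otimes_FE)$ when $d$ is odd, $n$ even, were already established in the preliminaries of Sections~2 and~3 respectively (using the double centralizer theorem and the fact that $D\otimes_FE$ is a division algebra since $d$ is odd).

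Next I would treat the case $d$ even. Proposition~\ref{necessaire2} gives the necessary condition: if $St(1)$ is $\tilde{\mu}$-distinguished, then $\mu_{|F^*}=1$ when $n$ is even and $\mu=1$ when $n$ is odd; in addition only the open double coset $P_\emptyset u_{s_0}H$ supports a nonzero $\tilde{\mu}$-equivariant form, giving $\dim\mathrm{Hom}_H(St(1),\tilde{\mu})\le 1$. For sufficiency, the two propositions following Proposition~\ref{dist} construct such a nonzero form via the meromorphically continued integral $I_{n,z}$ evaluated at $z=0$, covering $n$ odd with $\mu=1$ and $n$ even with $\mu_{|F^*}=1$, $\mu\neq 1$. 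The remaining case $n$ even, $\mu=1$ is ruled out by the last proposition of Section~2: here $I_{n,0}$ is still the unique (up to scalar) $H$-invariant form on $ind_{P_\emptyset}^G(1)$, but Lemma~\ref{nonzero} exhibits an explicit $f\in ind_P^G(1)$ with $P$ of type $(1,\dots,1,2,1,\dots,1)$ on which $I_{n,0}$ does not vanish, obstructing the quotient to the Steinberg.

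Finally I would treat the case $d$ odd, $n$ even, which is completely parallel. Proposition~\ref{necessaire} provides the necessary condition $\mu_{|F^*}=1$, together with the one-dimensionality of $\mathrm{Hom}_H(St(1),\tilde{\mu})$ and the identification of the supporting orbit as the open one $P_\emptyset H$. The subsequent proposition supplies the distinguishing linear form when $\mu_{|F^*}=1$ and $\mu\neq 1$, using the convergence result of \cite{B-D.08} combined with the character equality of Proposition~\ref{car} (where the exponent $1/2$ on $\delta_{P_s}$ replaces the exponent $1$ of the even case). The non-distinction statement for $\mu=1$ is the last proposition of Section~3, proved along the same lines as in the even case by showing that the unique candidate linear form fails to kill the Steinberg kernel. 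No step here is a real obstacle since each ingredient has already been proved; the only mildly delicate bookkeeping is to check that the dichotomy of cases exhausts all $(n,d)$ with $nd$ even, which is immediate.
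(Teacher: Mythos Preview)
Your proposal is correct and mirrors the paper exactly: the theorem is presented there as a summary (``Let us summarize our results'') with no separate proof, and you have correctly gathered the necessary conditions (Propositions~\ref{necessaire2} and~\ref{necessaire}), the sufficiency constructions, and the non-distinction results for $\mu=1$ from Sections~2 and~3. One small remark: your one-line justification of the embedding criterion is incomplete as stated---over a general field the condition $2\mid nd$ is necessary but not sufficient; one also needs that the index of $D\otimes_F E$ divide $nd/2$, which over a non-archimedean local field follows from the Hasse invariant computation and indeed reduces to $2\mid nd$.
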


\paragraph{}Now, let us write and prove the Conjecture 1 of \cite{P-TB} for the Steinberg representation $St(n,1)$ (which is a discrete series representation) :

\begin{thm}\label{TB}(Prasad and Takloo-Bighash conjecture, Steinberg case) Let $A=\mathcal{M}_n(D)$ and $\pi=St(n,1)$ which is an irreducible admissible representation of $A^\times=GL(n,D)=G$. Recall that $\pi$ corresponds via Jacquet-Langlands correspondence to 
$St(nd,1)$ (a representation of $GL(nd,F)$) with central character $\omega_\pi=1$. Let $\mu$ be a character of $E^\times$ such that $\mu^{\frac{nd}{2}}|_{F^\times}=\omega_\pi=1$. Then, the character $\mu\circ N_{rd,E}$ of 
$H=(C_{\mathcal{M}_n(D)}(E))^\times$ appears as a quotient in $\pi$ restricted to $H$ if and only if :
\begin{enumerate}
 \item the Langlands parameter of $\pi$ takes values in $GSp_{nd}(\mathbb{C})$ with similitude factor $\mu_{|F^\times}$.
 \item the epsilon factor satisfies $\epsilon(\frac{1}{2},\pi\otimes Ind_E^F(\mu^{-1}))=(-1)^n\omega_{E/F}(-1)^{\frac{nd}{2}}$ (where $\omega_{E/F}$ is the quadratic character of $F^\times$ with kernel the norms of $E^\times$).
\end{enumerate}

\end{thm}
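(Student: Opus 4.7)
The plan is to deduce the theorem from Theorem 4.1 by translating conditions $(1)$ and $(2)$ into the parity conditions on $\mu$ established there. Since Theorem 4.1 explicitly lists the $\mu$ for which $St(n,1)$ is $\tilde{\mu}$-distinguished, it suffices to check that $(1)$ and $(2)$ jointly select the same set of characters.

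For condition $(1)$, by Jacquet-Langlands the Langlands parameter $\phi$ of $\pi = St(n,1)$ agrees with that of $St(nd,1)$ on $GL(nd,F)$, namely the Weil-Deligne representation acting trivially on $W_F$ and by the $nd$-dimensional irreducible representation of the Deligne $SL_2(\mathbb{C})$. Since $nd$ is even (as $E$ embeds in $\mathcal{M}_n(D)$), this representation is symplectic, so $\phi$ takes values in $Sp_{nd}(\mathbb{C}) \subset GSp_{nd}(\mathbb{C})$ with trivial similitude factor. Condition $(1)$ is thus equivalent to $\mu_{|F^\times} = 1$, the common necessary condition of Theorem 4.1.

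Assume $\mu_{|F^\times} = 1$ and turn to condition $(2)$. On the Galois side, inductivity of local constants combined with Deligne's identity $\lambda(E/F, \psi_F)^2 = \omega_{E/F}(-1)$ for the Langlands constant yields
$$\epsilon\!\left(\tfrac{1}{2}, \pi \otimes Ind_E^F(\mu^{-1})\right) = \omega_{E/F}(-1)^{nd/2} \cdot \epsilon\!\left(\tfrac{1}{2}, St(nd,1)_E \otimes \mu^{-1}, \psi_E\right),$$
where $\psi_E = \psi_F \circ tr_{E/F}$ and $St(nd,1)_E$ denotes the Steinberg of $GL(nd,E)$. The prefactor already matches the corresponding one in $(2)$, so the remaining task is to verify that $\epsilon(1/2, St(nd,1)_E \otimes \mu^{-1}, \psi_E) = (-1)^n$.

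The final step is a two-case analysis. For $\mu = 1$, the monodromy correction for the Steinberg parameter gives $\epsilon(1/2, St(nd,1)_E, \psi_E) = (-1)^{nd-1}$, and $(2)$ reads $(-1)^{nd-1} = (-1)^n$, i.e., $n(d-1)$ odd, i.e., $n$ odd and $d$ even --- exactly the $\mu = 1$ case of Theorem 4.1. For $\mu \neq 1$ with $\mu_{|F^\times} = 1$, Frohlich-Queyrut's theorem gives $\epsilon(1/2, \mu^{-1}, \psi_E) = \pm 1$; combined with the vanishing of the Steinberg monodromy correction (since the inertia-fixed subspace is zero when $\mu$ is ramified, and by a direct computation otherwise), the factor reduces to $\epsilon(1/2, \mu^{-1}, \psi_E)^{nd} = 1$ using $nd$ even. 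Hence $(2)$ becomes $1 = (-1)^n$, i.e., $n$ even --- the two remaining distinguished cases. The hard part is assembling these ingredients --- inductivity with $\lambda^2 = \omega_{E/F}(-1)$, the Steinberg monodromy sign, and Frohlich-Queyrut --- and keeping careful track of conductors and the parity of $nd/2$ so that the precise target sign $(-1)^n \omega_{E/F}(-1)^{nd/2}$ emerges exactly in the distinguished cases of Theorem 4.1 and nowhere else.
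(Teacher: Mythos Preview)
Your approach is essentially the paper's: both reduce to Theorem~4.1 by translating condition (1) into $\mu_{|F^\times}=1$ and then, via inductivity of epsilon factors and $\lambda(E/F,\psi_F)^2=\omega_{E/F}(-1)$, translating condition (2) into $\epsilon(\tfrac12, St(nd,\mu^{-1})_E)=(-1)^n$. The only divergence is in how this last epsilon is evaluated. The paper uses multiplicativity of $\gamma$-factors together with the identity $\epsilon(s,\mu^{-1})\epsilon(1-s,\mu^{-1})=1$ (a consequence of $\bar\mu=\mu^{-1}$ and the Galois-invariance of $\psi_E=\psi_F\circ\mathrm{tr}$), which telescopes the epsilon product to $1$ and leaves only an $L$-factor ratio; the pole analysis then gives $-1$ for $\mu=1$ and $1$ otherwise. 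You instead use the Weil--Deligne formula $\epsilon(\tfrac12,\chi\otimes Sp(N),\psi)=\epsilon(\tfrac12,\chi,\psi)^N\cdot\det(-\Phi\mid\chi^{I})^{N-1}$ together with Fr\"ohlich--Queyrut; this is equally valid, and your ``direct computation'' in the unramified $\mu\neq 1$ case works because one is then forced to have $E/F$ ramified and $\mu(\varpi_E)=-1$, making the correction factor $(-\mu^{-1}(\varpi_E))^{nd-1}=1$. The two computations are equivalent repackagings of the same functional-equation input.
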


As $N_{rd,F|H}=N_{E/F}\circ N_{rd,E}$, then if $\mu$ is a character of $E^*$, $\mu\circ N_{rd,E}$ can be extended to a character of $G$ if and only if there exists $\chi$ a character of $F^*$ such that $\mu=\chi\circ N_{E/F}$.

\paragraph{}Let us rephrase Theorem \ref{TB} in this case. Let $\pi$ and $\mu$ be as in the conjecture and suppose that there exists $\chi$ a character of $F^*$ such that $\mu=\chi\circ N_{E/F}$. We denote by $W_F$ the Weil group 
of $F$ and $BC_E$ denotes the base change to $E$.

\begin{itemize}
 \item As $\mu=\chi\circ N_{E/F}$, the statement ``$\mu\circ N_{rd,E}$ appears as a quotient in $\pi$ restricted to $H$'' is equivalent to saying that $St(n,1)$ 
 is $\chi\circ N_{rd,F}$-distinguished under $H$. This is again equivalent to $(\chi\circ N_{rd,F})^{-1}\otimes St(n,1)=St(n,\chi^{-1})$ is $H$-distinguished.
 \item Now, let us consider the 1\ts{st} point of the conjecture. The Langlands parameter of $\pi=St(n,1)$ is $Sp(nd)=:\Phi$ (where $Sp(nd)$ denotes the unique irreducible representation of $SL(2,\mathbb{C})$ of dimension $nd$) . The 1\ts{st} assertion in the conjecture means :
 \begin{eqnarray}
\text{there exists }<\cdot,\cdot> \text{ a nondegenerate alternating  bilinear form on } \mathbb{C}^{nd}=:V \text{ such that } \nonumber\\
\forall w\in W_F, \,\forall v,v'\in V, \hspace{0.5cm}<\Phi(w).v,\Phi(w).v'>=\mu_{|F^\times}(w)<v,v'>=\chi^2<v,v'>. \label{forme}
 \end{eqnarray}
As the Langlands parameter of $St(n,\chi^{-1})$ is $Sp(nd)\otimes \chi^{-1}=:\Psi$, statement \eqref{forme} is equivalent to :
\begin{eqnarray*}
 &&\text{there exists } <\cdot,\cdot> \text{ a nondegenerate alternating bilinear form on } V \text{ such that } \\
 &&\forall w\in W_F,\;\forall v, v'\in V,\hspace{.5cm} <\Psi(w).v,\Psi(w).v'>=<v,v'>
\end{eqnarray*}
which is exactly the definition of $St(n,\chi^{-1})$ being symplectic.
\item Finally, we consider the 2\ts{nd} point of the conjecture and we formulate the epsilon factor in another way :
\begin{eqnarray*}
 \epsilon(\frac{1}{2},\pi\otimes Ind_E^F(\mu^{-1}))&=&\epsilon(\frac{1}{2},Sp(nd)\otimes Ind_E^F((\chi\circ N_{E/F})^{-1}))\\
 &=&\epsilon(\frac{1}{2},Ind_E^F(Sp(nd)\otimes \chi_E^{-1}))\hspace{1cm}\text{where }\chi_E=\chi\circ N_{E/F}=\mu\\
 &=&\omega_{E/F}(-1)^{\frac{nd}{2}}\epsilon(\frac{1}{2},Sp(nd)\otimes \chi_E^{-1})\\
 &=&\omega_{E/F}(-1)^{\frac{nd}{2}}\epsilon(\frac{1}{2},BC_E(St(n,\chi^{-1})))
\end{eqnarray*}
so the 2\ts{nd} point of the conjecture is equivalent to : $\epsilon(\frac{1}{2},BC_E(St(n,\chi^{-1})))=(-1)^n$.
\end{itemize}

\paragraph{}To sum up, under the additional hypothesis that the character $\mu\circ N_{rd,E}$ of $H$ can be extended to a character of $G$, Theorem \ref{TB} is equivalent to the following, which is a reformulation similar to 
Conjecture 1.4 of \cite{FMW}:

\begin{thm}
 Let $St(n,\chi)$ be the Steinberg representation of $G=GL(n,D)$. $St(n,\chi)$ is $H$-distinguished if and only if it is symplectic and $\epsilon(\frac{1}{2},BC_E(St(n,\chi)))=(-1)^n$.
\end{thm}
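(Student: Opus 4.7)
The plan is to derive this reformulation from Theorem \ref{TB} via the equivalences laid out in the preceding paragraph, reading them as a dictionary between the language of $\mu$-distinction for $St(n,1)$ and that of $H$-distinction for $St(n,\chi^{-1})$.

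I would begin by observing that specifying a character $\chi$ of $F^\times$ is the same as specifying a character $\mu = \chi\circ N_{E/F}$ of $E^\times$ for which $\mu\circ N_{rd,E}$ extends to $G$ (as $\chi\circ N_{rd,F}$). For such $\mu$, the identity $St(n,\chi^{-1}) = (\chi^{-1}\circ N_{rd,F})\otimes St(n,1)$ gives an elementary equivalence between the $H$-distinction of $St(n,\chi^{-1})$ and the $\mu\circ N_{rd,E}$-distinction of $\pi:=St(n,1)$. Moreover the PTB constraint $\mu^{nd/2}|_{F^\times}=\chi^{nd}=1$ is forced on any distinguished pair by comparing central characters, so we are placed squarely in the hypothesis of Theorem \ref{TB}.

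I would then apply the three equivalences already spelled out before the theorem: the distinction statements match by the above; condition (1) of Theorem \ref{TB}, which asks that the parameter $Sp(nd)$ of $\pi$ preserve a nondegenerate alternating form with similitude $\mu|_{F^\times}=\chi^2$, is equivalent, via the twist $\Psi = Sp(nd)\otimes\chi^{-1}$, to the parameter of $St(n,\chi^{-1})$ preserving an honest symplectic form, i.e.\ to $St(n,\chi^{-1})$ being symplectic; condition (2) is equivalent, via inductivity of local epsilon factors in degree $2$ (the formula $\epsilon(\tfrac{1}{2}, \mathrm{Ind}_E^F \rho) = \omega_{E/F}(-1)^{\dim\rho/2}\epsilon(\tfrac{1}{2},\rho)$), to the identity $\epsilon(\tfrac{1}{2}, BC_E(St(n,\chi^{-1}))) = (-1)^n$. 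Chaining these three equivalences yields the final statement with $\chi^{-1}$ in place of $\chi$, and since $\chi\mapsto\chi^{-1}$ is a bijection on characters of $F^\times$, this is the claimed reformulation.

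The substance this formal reduction defers is Theorem \ref{TB} itself, where one must check case by case (on the parities of $n$ and $d$) that the distinction conditions of Theorem 4.1 coincide with the conjunction of symplecticity and the PTB epsilon identity. The principal obstacle is the epsilon factor calculation: given $\chi^2=1$, one must evaluate $\epsilon(\tfrac{1}{2}, Sp(nd)\otimes\chi_E)$ with $\chi_E = \chi\circ N_{E/F}$, treating separately the cases $\chi_E = 1$ (equivalently $\chi\in\{1,\omega_{E/F}\}$) and $\chi_E\neq 1$, and verifying that the resulting sign equals $(-1)^n$ precisely on the characters singled out by Theorem 4.1 in each of the three parity regimes.
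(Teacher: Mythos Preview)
Your proposal is correct and matches the paper's approach: the theorem is obtained from Theorem~\ref{TB} via exactly the three-item dictionary (distinction, symplecticity, epsilon factor) that you rehearse, followed by the harmless relabelling $\chi\leftrightarrow\chi^{-1}$. One small remark on your final paragraph: the paper's proof of Theorem~\ref{TB} actually carries out the epsilon computation for arbitrary $\mu$ with $\mu_{|F^\times}=1$ (not only $\mu=\chi\circ N_{E/F}$), distinguishing the cases $\mu=1$ versus $\mu\neq 1$; your narrower formulation in terms of $\chi_E$ suffices for the present statement but is slightly less than what the paper proves.
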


\begin{rem}
 In \cite{FMW}, Conjecture 1.4 (a reformulation of Conjecture 1 of Prasad and Takloo-Bighash in \cite{P-TB}) is stated for general representations but only in the quaternionic case. It is checked for supercuspidal representations 
 with extra conditions.
\end{rem}

\begin{proof}[Proof of Theorem \ref{TB}]

We use the usual $\epsilon$, $\gamma$ and $L$-factors as defined in Godement-Jacquet (see \cite{G-J.72}); we omit the third parameter in $\epsilon$ and $\gamma$ which is a non-trivial additive character of $E$ but trivial on $F$. 

\paragraph{}According to the preceding reformulation (the three points above), we have to prove that $St(n,1)$ is $\tilde{\mu}$-distinguished if and only if $\mu_{|F^*}=1$ and $\epsilon(\frac{1}{2},Sp(nd)\otimes\mu^{-1})=(-1)^n$ 
that is to say $\epsilon(\frac{1}{2},St(nd,\mu^{-1}))=(-1)^n$ where $St(nd,\mu^{-1})$ is the Steinberg representation of $GL(nd,E)$ with parameter $\mu^{-1}$.

\paragraph{}For $s\in\mathbb{C}$,
$$\gamma(s,St(nd,\mu^{-1}))=(\mu^{-nd}(-1))^{nd-1}\epsilon(s,St(nd,\mu^{-1}))\frac{L(1-s,St(nd,\mu^{-1}))}{L(s,St(nd,\mu^{-1}))}.$$
        
\begin{sloppypar}On the other hand, $$\gamma(s,St(nd,\mu^{-1}))=\gamma(s+\frac{1-nd}{2},\mu^{-1})\times\gamma(s+\frac{1-nd}{2}+1,\mu^{-1})\times\dots\times\gamma(s+\frac{nd-1}{2},\mu^{-1}).$$\\

If $\mu_{|F^*}=1$, then $\bar{\mu}=\check{\mu}$ (where $\bar{\mu}$ denotes the Galois twist of $\mu$ and $\check{\mu}$ denotes the contragredient of $\mu$) so 
$$L(s,\mu^{-1})=L(s,\check{\mu}^{-1})\hspace{0.5cm}\text{and}\hspace{0.5cm}\epsilon(s,\mu^{-1})\epsilon(1-s,\mu^{-1})=1\hspace{0.5cm}\forall s\in \mathbb{C}.$$
Hence, if $\mu_{|F^*}=1$, $\gamma(s,\mu^{-1})=\epsilon(s,\mu^{-1})\frac{L(1-s,\mu^{-1})}{L(s,\mu^{-1})}\:\forall s\in\mathbb{C}$, so $\gamma(s,St(nd,\mu^{-1}))$ equals
\begin{equation*}
\begin{array}{l}
\epsilon(s+\frac{1-nd}{2},\mu^{-1})\times\dots\times\epsilon(s+\frac{nd-1}{2},\mu^{-1})\times\frac{L(1-s+\frac{nd-1}{2},\mu^{-1})\times\dots\times L(1-s+\frac{1-nd}{2},\mu^{-1})}{L(s+\frac{1-nd}{2},\mu^{-1})\times\dots\times L(s+\frac{nd-1}{2},\mu^{-1})}\\
=\frac{L(\frac{1}{2}+\frac{nd-1}{2},\mu^{-1})\times\dots\times L(\frac{1}{2}+\frac{1-nd}{2},\mu^{-1})}{L(\frac{1}{2}+\frac{1-nd}{2},\mu^{-1})\times\dots\times L(\frac{1}{2}+\frac{nd-1}{2},\mu^{-1})}.
\end{array}
\end{equation*}
Now for $\mu$ a character of $E^*$ such that $\mu_{|F^*}=1$ and $s$ a real number, we need to know when $L(s,\mu^{-1})$ has a pole. Let $\varpi_E$ be a uniformizer of $E$ and $q_E$ denote the cardinality of the residue field of $E$.\\
If $\mu$ is non-ramified, $L(s,\mu^{-1})=\frac{1}{1-\mu^{-1}(\varpi_E)q_E^{-s}}$ and $L(s,\check{\mu}^{-1})=L(s,\mu^{-1})=\frac{1}{1-\mu(\varpi_E)q_E^{-s}}$ so $L(s,\mu^{-1})$ has a pole is equivalent to :
$$\mu^{-1}(\varpi_E)=q_E^s=\mu(\varpi_E)\Leftrightarrow s=0\text{ and }\mu(\varpi_E)=1\textit{ i.e. }\mu=1$$
\begin{flushleft}because $\mu$ is non-ramified.\end{flushleft}
If $\mu$ is ramified, $L(s,\mu^{-1})=1$ so $L(s,\mu^{-1})$ has no pole.\\
To conclude, $L(s,\mu^{-1})$ has a pole if and only if $s=0$ and $\mu=1$.

\paragraph{}Finally, if $\mu_{|F^*}=1$, $\gamma(\frac{1}{2},St(nd,\mu^{-1}))=\epsilon(\frac{1}{2},St(nd,\mu^{-1}))$ and also 
$\gamma(\frac{1}{2},St(nd,\mu^{-1}))=\frac{L(\frac{nd}{2},\mu^{-1})\times\dots\times L(1-\frac{nd}{2},\mu^{-1})}{L(1-\frac{nd}{2},\mu^{-1})\times\dots\times L(\frac{nd}{2},\mu^{-1})}$ so : 
\begin{itemize}
\item ($\mu_{|F^*}=1$ and $\mu\neq1$) $\Leftrightarrow$ ($\epsilon(\frac{1}{2},St(nd,\mu^{-1}))=1$ and $\mu_{|F^*}=1$).
\item ($\mu=1$) $\Leftrightarrow$ ($\epsilon(\frac{1}{2},St(nd,\mu^{-1}))=\underset{s\rightarrow \frac{nd}{2}}{lim}\frac{L(\frac{nd}{2}-s,\mu^{-1})}{L(s-\frac{nd}{2},\mu^{-1})}=\underset{s\rightarrow 0}{lim}\frac{1-q_E^{-s}}{1-q_E^s}=\underset{s\rightarrow 0}{lim}\frac{-1}{q_E^s}=-1$ 
\\and $\mu_{|F^*}=1$).
\end{itemize}
This is equivalent to $St(n,1)$ is $\tilde{\mu}$-distinguished if and only if $\mu_{|F^*}=1$ and $\epsilon(\frac{1}{2},St(nd,\mu^{-1}))=(-1)^n$.
\end{sloppypar}
\end{proof}

\bibliographystyle{alpha}
\bibliography{bib_distinction}
\end{document}